\documentclass[reqno,11pt]{amsart}
\usepackage[foot]{amsaddr}
\usepackage{bbold}
\usepackage{charter}
\usepackage[margin=1in]{geometry}
\usepackage[colorlinks=true,linktoc=all,linkcolor=blue,citecolor=blue]{hyperref}

\theoremstyle{plain}
\newtheorem{theorem}{Theorem}[section]
\newtheorem*{theorem*}{Theorem}
\newtheorem{proposition}{Proposition}[section]
\newtheorem{lemma}[theorem]{Lemma}
\newtheorem{corollary}[theorem]{Corollary}
\theoremstyle{definition}

\newtheorem{remark}[theorem]{Remark}
\newtheorem{example}[theorem]{Example}

\numberwithin{equation}{section}
%\everymath{\displaystyle}

% Author macros
\renewcommand{\hat}{\widehat}
\newcommand{\bigchi}{\mbox{\Large$\chi$}}
\newcommand{\metric}{\mathrm{d}}
\newcommand{\Co}{\mathbb{C}}
\newcommand{\D}{\mathbb{D}}
\newcommand{\N}{\mathbb{N}}
\newcommand{\Zed}{\mathbb{Z}}

\newcommand{\Lmuinf}{L_{\hspace{-.2ex}\mu}^{\infty}}
\newcommand{\lilLmuinf}{L_{\hspace{-.2ex}\mu}^0}
\newcommand{\adWCO}{W_{\hspace{-.2ex}\psi,\varphi}^*}
\newcommand{\modu}[1]{\left\lvert #1 \right\rvert}

\title{Weighted Composition Operators on Discrete Weighted Banach Spaces}

\author{%
	Robert F.~Allen\textsuperscript{1} and 
	Matthew A.~Pons\textsuperscript{2}}
\address{\textsuperscript{1}Department of Mathematics \& Statistics, University of Wisconsin-La Crosse}
\address{\textsuperscript{2}Department of Mathematics \& Actuarial Science, North Central College}
\email{rallen@uwlax.edu, mapons@noctrl.edu}

\keywords{Weighted composition operators, Metric spaces, Weighted Banach spaces.}
\subjclass{primary: 47B33; secondary: 05C05}

\begin{document}
	
	\begin{abstract} We present the current results in the study of weighted composition operators on weighted Banach spaces of an unbounded, locally finite metric space.  Specifically, we determine characterizations of bounded and compact weighted composition operators, including the operator and essential norms.  In addition, we characterize the weighted composition operators that are injective, are bounded below, have closed range, and have bounded inverse.  We characterize the isometries and surjective isometries among the weighted composition operators, as well as those that satisfy the Fredholm condition.  Lastly, we provide numerous interesting examples of the richness of these operators acting on the discrete weighted Banach spaces. 
	\end{abstract}
	
	\maketitle
	
	\section{Introduction}
	
	Let $X$ be a Banach space of functions on a domain $\Omega$.  For $\psi$ a function on $\Omega$ and $\varphi$ a self-map of $\Omega$, the linear operator defined on $X$ by \[W_{\psi,\varphi}f = \psi(f\circ\varphi)\] is called the weighted composition operator induced by $\psi$ and $\varphi$.  
	Observe that when $\psi \equiv 1$, we have the composition operator $C_\varphi f = f\circ \varphi$, and similarly when $\varphi(z) = z$, we have the multiplication operator $M_\psi f = \psi f.$
	
	Classically, the study of weighted composition operators has been linked to isometries on Banach spaces.  In fact, Banach \cite{Banach:32} proved that the surjective isometries on $C(Q)$, the space of continuous real-valued functions on a compact metric space $Q$, are of the form $f \mapsto \psi(f\circ\varphi)$, where $\vert\psi\vert \equiv 1$ and $\varphi$ is a homeomorphism of $Q$ onto itself.  The characterization of isometries on most Banach spaces of analytic functions is still an open problem.  However, there are many spaces for which the isometries are known.  In many of these cases, the isometries have the form of a weighted composition operator.  The interested reader is directed to \cite{CimaWogen:80,ElGebeilyWolfe:85,Kolaski:82}.
	
	The study of weighted composition operators is not limited to the study of isometries.  Moreover, properties of weighted composition operators are not solely determined by the composition and multiplication operators of which they are comprised.  There are many examples of bounded (compact) weighted composition operators that are not comprised of bounded (compact) composition or multiplication operators.  In the last section of this paper, we provide further examples of such weighted composition operators.
	
	In recent years, spaces of functions defined on discrete structures such as infinite trees have been explored.  These spaces provide discrete analogs to classical spaces of analytic functions on the open unit disk $\D$ in $\Co$.  A discrete version of the Bloch space was developed by Colonna and Easley \cite{ColonnaEasley:10} called the Lipschitz space.  Further research on the Lipschitz space, as well as multiplication and composition operators acting on the Lipschitz space, has been conducted by Colonna, Easley, and the first author.  The interested reader is directed to \cite{ColonnaEasley:10,AllenColonnaEasley:14}.  In addition, a discrete analog of the Hardy space was developed by Muthukumar and Ponnusamy, and the multiplication and composition operators were studied \cite{MuthukumarPonnusamyI,MuthukumarPonnusamyII}.
	
	Of interest in this paper is a discrete analog to the weighted Banach spaces $H_\nu^\infty(\D)$, and their weighted composition operators, as studied in \cite{BonetLindstromWolf:2008,ContrerasHernandez-Diaz:2000,Montes:2000}.  In \cite{AllenCraig:2015}, the discrete weighted Banach space $\Lmuinf$ was defined, and the multiplication operators were studied.  In \cite{AllenPons:2016}, the authors furthered the operator theory on $\Lmuinf$ by studying the composition operators.  The study of composition operators on such discrete spaces poses more challenges than the study of multiplication operators.  In this paper, we study the weighted composition operators on $\Lmuinf$.  
	
	\subsection{Organization of the paper} In Section \ref{Section:Space}, we collect useful results on weighted Banach spaces of an unbounded, locally finite metric space, as well as the little weighted Banach space.
	
	In Section \ref{Section:Bounded}, we characterize the bounded weighted composition operators as well as determine their operator norms.  We also provide necessary and sufficient conditions for the weighted composition operator to be bounded on the little weighted Banach space, while providing a complete characterization in two situations. 
	
	In Section \ref{Section:Compactness}, we characterize the compact weighted composition operators and determine their essential norm.  These results lead to the characterization of compact multiplication and composition operators on the little weighted Banach space, which have not previously been studied.
	
	In Section \ref{Section:BoundedBelow}, we characterize the weighted composition operators that are injective, are bounded below, and have closed range.  The application of these results to the multiplication operator yields the characterization of bounded below as in \cite{AllenCraig:2015}, but with a completely different proof.  
	
	In Section \ref{Section:Invertible}, we characterize the weighted composition operators that are invertible with bounded inverse.  In addition, we characterize the isometries and surjective isometries among the weighted composition operators.  This completes the characterization of the isometries and surjective isometries amongst the composition operators that was started in \cite{AllenPons:2016}.
	
	In Section \ref{Section:Fredholm}, we characterize the so-called Fredholm weighted composition operators.  This gives rise to characterizations of the Fredholm multiplication and composition operators as well.  To date, this is the first study of Fredholm operators on such discrete spaces.
	
	Finally, in Section \ref{Section:Examples}, we illustrate the richness of the weighted composition operators acting on the weighted Banach spaces through several examples.  We show in many cases that the weighted composition operator is more than the sum of its parts.  Among the examples is a compact weighted composition operator for which neither the corresponding multiplication or composition operators are compact, and an isometric weighted composition operator for which the composition operator is not bounded.
	
	\subsection{Preliminary definitions and notation}
	The domains of the function spaces in this paper are metric spaces that are locally finite, with a distinguished element $o$, called the root.  Recall, a metric space $(T,\metric)$ is locally finite if for every $M>0$, the set $\{v \in T : \metric(o,v) < M\}$ is finite.  For a point $v$ in $T$, we define the length of $v$ by $\modu{v} = \metric(o,v)$.  In this paper, we assume the locally finite metric space $(T,\metric)$ has root $o$ and is unbounded, that is for every $M > 0$, there exists $v \in T$ with $\modu{v} \geq M$.  As the length of a point is used throughout, and not specifically the metric $\metric$, we will denote the metric space simply by $T$. Lastly, we denote by $T^*$ the set $T\setminus\{o\}$.  
	
	\section{Weighted Banach Spaces}\label{Section:Space}
	In this section, we define the weighted Banach spaces of an unbounded, locally finite metric space $T$, and collect useful results for this paper.  A positive function $\mu$ on $T$ is called a \textit{weight}.  The \textit{weighted Banach space} on $T$ with weight $\mu$, denoted $\Lmuinf(T)$ or simply $\Lmuinf$, is defined as the space of functions $f$ on $T$ for which \[\sup_{v \in T}\;\mu(v)\modu{f(v)} < \infty.\]
	
	\noindent The \textit{little weighted Banach space} on $T$ with weight $\mu$, denoted $\lilLmuinf(T)$ or simply $\lilLmuinf$, is the space of functions $f \in \Lmuinf$ for which \[\lim_{\modu{v} \to \infty} \mu(v)\modu{f(v)} = 0.\] It was shown in \cite{AllenCraig:2015} that, when $T$ is an infinite rooted tree, the space $\Lmuinf(T)$ endowed with the norm \[\|f\|_\mu = \sup_{v \in T} \mu(v)\modu{f(v)}\] is a functional Banach space, that is, a Banach space for which every point-evaluation functional $K_v:\Lmuinf(T) \to \Co$, $f\mapsto f(v)$, is bounded for all $v \in T$.  The proof of \cite{AllenCraig:2015} carries forward for a locally finite metric space $T$.
	It was shown in \cite{AllenColonnaMartinesPons} that $\lilLmuinf$ is a closed, separable subspace of $\Lmuinf$.   The following lemmas capture the properties most relevant to our work here. We note that similar statements can be made for $\lilLmuinf$.
	
	\begin{lemma}[{\cite[Lemma 2.6]{AllenPons:2016}}]\label{Lemma:point_evaluation_bound}
		Suppose $f$ is a function in $\Lmuinf$.  Then for all $v \in T$, it holds that \[\modu{f(v)} \leq \frac{1}{\mu(v)}\|f\|_\mu.\]
	\end{lemma}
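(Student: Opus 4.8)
The plan is simply to unwind the definition of the norm $\|\cdot\|_\mu$. First I would fix an arbitrary point $v \in T$ and observe that the single quantity $\mu(v)\modu{f(v)}$ is one of the terms over which the supremum defining $\|f\|_\mu$ is taken; hence
\[
\mu(v)\modu{f(v)} \;\leq\; \sup_{w \in T} \mu(w)\modu{f(w)} \;=\; \|f\|_\mu .
\]
Since $\mu$ is a weight, it is by definition a strictly positive function on $T$, so $\mu(v) > 0$ and we may divide both sides of the displayed inequality by $\mu(v)$ to obtain $\modu{f(v)} \leq \frac{1}{\mu(v)}\|f\|_\mu$. As $v \in T$ was arbitrary, the estimate holds for every $v$, which is the claim.

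There is essentially no obstacle here: the only point requiring care is the division step, which is legitimate precisely because the standing convention is that a weight is positive everywhere, so $\mu(v) \neq 0$. I would also note in passing that this estimate is exactly the statement that each point-evaluation functional $K_v$ is bounded with $\|K_v\| \leq 1/\mu(v)$ (indeed with equality, as one sees by testing against a function supported at $v$), which is the form in which the lemma will be invoked repeatedly in the sequel — for instance, to pass from norm convergence of a sequence in $\Lmuinf$ to pointwise convergence on $T$.
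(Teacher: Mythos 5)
Your proof is correct and is the standard (indeed, essentially the only) argument: the paper itself states this lemma without proof, citing \cite[Lemma 2.6]{AllenPons:2016}, and the argument there is the same unwinding of the definition of $\|\cdot\|_\mu$ together with the positivity of the weight. Your closing remark correctly identifies the role the lemma plays (boundedness of the point-evaluation functionals), which matches how the paper uses it.
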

	
	We call a weight function $\mu$ \textit{typical} if $\lim_{\modu{v}\to\infty} \mu(v) = 0.$  The next result shows that little weighted Banach spaces containing the constant functions are precisely those with a typical weight.
	
	\begin{lemma}\label{Lemma:ConstantsTypicalWeight} The constant function 1 is an element of $\lilLmuinf$ if and only if $\mu$ is a typical weight.
	\end{lemma}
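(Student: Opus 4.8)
The plan is to unwind the two definitions and observe that, for the constant function $1$, the quantity governing membership in the little space is simply the weight $\mu(v)$ itself (since $\mu(v)\modu{1(v)} = \mu(v)$). With this in hand, the forward (``only if'') implication is essentially immediate, while the reverse (``if'') implication needs one small extra observation — that a typical weight is automatically bounded — which is precisely where local finiteness of $T$ enters.

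First I would dispatch the ``only if'' direction: if $1 \in \lilLmuinf$, then by the definition of the little weighted Banach space, $\lim_{\modu{v}\to\infty}\mu(v)\modu{1(v)} = \lim_{\modu{v}\to\infty}\mu(v) = 0$, so $\mu$ is typical by definition.

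For the converse, suppose $\mu$ is typical. I would first show $\mu$ is bounded on $T$: taking $\varepsilon = 1$ in the definition of typical, there is $M > 0$ with $\mu(v) < 1$ whenever $\modu{v} \geq M$; since $T$ is locally finite, the set $\{v \in T : \modu{v} < M\}$ is finite, so $\mu$ is bounded on it, and hence $\sup_{v \in T}\mu(v) < \infty$. This gives $\sup_{v\in T}\mu(v)\modu{1(v)} < \infty$, i.e. $1 \in \Lmuinf$. Combining this with the hypothesis $\lim_{\modu{v}\to\infty}\mu(v) = 0$ (and again using $\mu(v)\modu{1(v)} = \mu(v)$) shows $1 \in \lilLmuinf$, completing the proof.

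The argument is routine; the only point requiring any care — the ``main obstacle,'' such as it is — is remembering that membership in $\lilLmuinf$ presupposes membership in $\Lmuinf$, so in the ``if'' direction one must verify boundedness of $\mu$ and not merely its vanishing at infinity.
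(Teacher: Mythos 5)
Your proof is correct, and since the paper states this lemma without proof (treating it as immediate from the definitions), your routine unwinding of the definitions is exactly the intended argument. The one point you flag — that the ``if'' direction requires checking $1 \in \Lmuinf$, which you get from local finiteness of $T$ — is handled correctly and is indeed the only place where any care is needed.
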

	
	\begin{lemma}\label{Lemma:1/mu_in_lilLmuinf}
		For $w \in T$, the functions $f(v) = \bigchi_w(v)$ and $g(v) = \frac{1}{\mu(v)}\bigchi_{w}(v)$ are elements of $\lilLmuinf$ with $\|f\|_\mu = \mu(w)$ and $\|g\|_\mu = 1$.
	\end{lemma}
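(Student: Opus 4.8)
The plan is to verify each assertion by a direct computation from the definition of $\|\cdot\|_\mu$, using only that $\mu$ is a (strictly positive) weight and that $T$ is locally finite and unbounded. The function $\bigchi_w$ is the indicator of the singleton $\{w\}$, equal to $1$ at $v=w$ and $0$ elsewhere, so in each case the quantity $\mu(v)\modu{\,\cdot\,(v)}$ is supported on the single point $w$, and everything reduces to evaluating it there.

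First I would treat $f=\bigchi_w$. For every $v\in T$ one has $\mu(v)\modu{f(v)}=\mu(w)$ if $v=w$ and $\mu(v)\modu{f(v)}=0$ otherwise; since $\mu(w)>0$, taking the supremum over $v\in T$ shows $f\in\Lmuinf$ with $\|f\|_\mu=\mu(w)$ (the supremum being attained at $v=w$). Next, for $g=\tfrac{1}{\mu}\bigchi_w$, which is well defined because $\mu$ is positive, the same bookkeeping gives $\mu(v)\modu{g(v)}=\mu(v)\cdot\tfrac{1}{\mu(v)}=1$ when $v=w$ and $0$ otherwise, so $g\in\Lmuinf$ with $\|g\|_\mu=1$.

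Finally I would check that both functions lie in the little space. Because $T$ is locally finite, the set $\{v\in T:\modu{v}\le\modu{w}\}$ is finite; in particular, for every $v$ with $\modu{v}>\modu{w}$ we have $v\ne w$ and hence $\mu(v)\modu{f(v)}=\mu(v)\modu{g(v)}=0$. Therefore $\lim_{\modu{v}\to\infty}\mu(v)\modu{f(v)}=\lim_{\modu{v}\to\infty}\mu(v)\modu{g(v)}=0$, so $f,g\in\lilLmuinf$. There is no genuine obstacle in this argument; the only place the hypotheses on $T$ enter is that local finiteness forces $w$ to be the unique "large-length" point on which the relevant expression could fail to vanish, which makes the limit trivial.
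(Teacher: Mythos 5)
Your proof is correct; the paper states this lemma without proof, and your direct verification (the quantity $\mu(v)\modu{f(v)}$ is supported only at $v=w$, where it equals $\mu(w)$ for $f$ and $1$ for $g$, and vanishes for all $v$ with $\modu{v}>\modu{w}$, forcing the limit to be zero) is exactly the intended routine argument. The only cosmetic remark is that you do not really need local finiteness for the final step: $\modu{v}>\modu{w}$ already implies $v\neq w$ simply because the lengths differ.
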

	
	\begin{lemma}\label{lemma:evaluationfunctionalsindep}
		If $\{v_i\}_{i=1}^n$ is a set of distinct points in $T$, then the set of point-evaluation functionals $\{K_{v_i}\}_{i=1}^n$ is linearly independent in $(\Lmuinf)^*$.
	\end{lemma}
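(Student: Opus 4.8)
The plan is to argue directly from the definition of linear independence in the dual space. Suppose scalars $c_1,\dots,c_n \in \Co$ satisfy $\sum_{i=1}^n c_i K_{v_i} = 0$ as an element of $(\Lmuinf)^*$; the goal is to conclude $c_j = 0$ for every $j$. The natural test functions to feed into this relation are the indicator functions $\bigchi_{v_j}$ of the singletons $\{v_j\}$, which by Lemma \ref{Lemma:1/mu_in_lilLmuinf} lie in $\lilLmuinf \subseteq \Lmuinf$ (and hence are legitimate inputs for each $K_{v_i}$).

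The key step is the evaluation. Fix $j \in \{1,\dots,n\}$ and apply the functional $\sum_{i=1}^n c_i K_{v_i}$ to $\bigchi_{v_j}$. Since $K_{v_i}(\bigchi_{v_j}) = \bigchi_{v_j}(v_i)$, and since the points $v_1,\dots,v_n$ are \emph{distinct}, we have $\bigchi_{v_j}(v_i) = 1$ when $i = j$ and $\bigchi_{v_j}(v_i) = 0$ otherwise. Therefore
\[
0 = \Big(\sum_{i=1}^n c_i K_{v_i}\Big)(\bigchi_{v_j}) = \sum_{i=1}^n c_i\, \bigchi_{v_j}(v_i) = c_j.
\]
As $j$ was arbitrary, all coefficients vanish, so $\{K_{v_i}\}_{i=1}^n$ is linearly independent.

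There is no real obstacle here: the only point requiring a cited fact is that the indicator functions $\bigchi_{v_j}$ actually belong to the space, which is exactly the content of Lemma \ref{Lemma:1/mu_in_lilLmuinf}; the distinctness hypothesis is precisely what makes the ``matrix'' of evaluations $(\bigchi_{v_j}(v_i))_{i,j}$ the identity, forcing each $c_j$ to be zero. One could equivalently phrase this by testing against the normalized functions $g_j(v) = \tfrac{1}{\mu(v)}\bigchi_{v_j}(v)$ from the same lemma, but the unnormalized indicators already suffice and keep the computation cleanest.
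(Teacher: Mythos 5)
Your proof is correct and is exactly the argument the paper gives (the paper's one-line proof says to consider the functions $f_i(v)=\bigchi_{v_i}(v)$, which vanish everywhere except at $v_i$); you have simply written out the evaluation step in full. Nothing further is needed.
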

	
	\begin{proof}
		The statement follows immediately by considering functions $f_i(v)=\bigchi_{v_i}(v)$ since each $f_i$ vanishes everywhere except $v_i$.
	\end{proof}
	
	\section{Boundedness and Operator Norm}\label{Section:Bounded}
	In this section, we study the boundedness of weighted composition operators acting on $\Lmuinf$ and $\lilLmuinf$.  In this endeavor, we define the following quantities for $\psi$ a function on $T$ and $\varphi$ a self-map of $T$:
	\[\sigma_{\psi,\varphi} = \sup_{v \in T}\;\frac{\mu(v)}{\mu(\varphi(v))}\modu{\psi(v)}\] and \[\xi_{\psi,\varphi} = \lim_{\modu{v} \to \infty} \frac{\mu(v)}{\mu(\varphi(v))}\modu{\psi(v)},\] if the limit exists.  For the boundedness of $W_{\psi,\varphi}$ on $\Lmuinf$, the quantity $\sigma_{\psi,\varphi}$ is the characterizing quantity.
	
	\begin{remark}\label{Remark:xi_equivalence}
		Note if $\varphi$ is a self-map of $T$ with finite range, them  $\xi_{\psi,\varphi} = 0$ if and only if $\lim_{\modu{v}\to\infty} \mu(v)\modu{\psi(v)} = 0$.  This follows directly from the definition of $\xi_{\psi,\varphi}$ and the existence of positive constants $m,M$ such that $m \leq \mu(\varphi(v)) \leq M$ for all $v \in T$. 
	\end{remark}
	
	We summarize the main results of this section in the following theorem.
	
	\begin{theorem*}\label{Theorem:Section3_summary}
		Let $\psi$ be a function on $T$ and $\varphi$ a self-map of $T$.
		\begin{enumerate}
			\item[(a)] The operator $W_{\psi,\varphi}:\Lmuinf\to\Lmuinf$ is bounded if and only if $\sigma_{\psi,\varphi}$ is finite.  In this case, $\|W_{\psi,\varphi}\| = \sigma_{\psi,\varphi}$.
			\item[(b)] For the operator $W_{\psi,\varphi}:\lilLmuinf\to\lilLmuinf$, 
			\begin{enumerate}
				\item[i.] if $\varphi$ has finite range, then $W_{\psi,\varphi}$ is bounded if and only if $\xi_{\psi,\varphi} = 0$.  In this case, $\|W_{\psi,\varphi}\| = \sigma_{\psi,\varphi}$.
				\item[ii.] if $\varphi$ has infinite range and $\mu$ is a typical weight, then $W_{\psi,\varphi}$ is bounded if and only if $\psi \in \lilLmuinf$ and $\sigma_{\psi,\varphi}$ is finite.
			\end{enumerate}
		\end{enumerate}
	\end{theorem*}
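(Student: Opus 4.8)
The plan is to treat the four assertions separately but to lean on a single computational identity throughout: for $f \in \Lmuinf$ and $v \in T$,
\[
\mu(v)\,\modu{(W_{\psi,\varphi}f)(v)} = \mu(v)\,\modu{\psi(v)}\,\modu{f(\varphi(v))} = \frac{\mu(v)}{\mu(\varphi(v))}\,\modu{\psi(v)}\cdot\mu(\varphi(v))\,\modu{f(\varphi(v))}.
\]
From this, $\mu(v)\,\modu{(W_{\psi,\varphi}f)(v)} \le \frac{\mu(v)}{\mu(\varphi(v))}\modu{\psi(v)}\,\|f\|_\mu$, so taking the supremum over $v$ gives $\|W_{\psi,\varphi}f\|_\mu \le \sigma_{\psi,\varphi}\|f\|_\mu$; this proves sufficiency in (a) and the bound $\|W_{\psi,\varphi}\| \le \sigma_{\psi,\varphi}$. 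For the reverse inequality and necessity in (a), I would test on the functions $g_w(v) = \frac{1}{\mu(v)}\bigchi_w(v)$ from Lemma \ref{Lemma:1/mu_in_lilLmuinf}, which have $\|g_w\|_\mu = 1$; since $(W_{\psi,\varphi}g_w)(v) = \psi(v)g_w(\varphi(v))$ is supported on $\varphi^{-1}(\{w\})$ and equals $\frac{\psi(v)}{\mu(w)}$ there, one computes $\|W_{\psi,\varphi}g_w\|_\mu = \sup_{v \in \varphi^{-1}(w)} \frac{\mu(v)\modu{\psi(v)}}{\mu(w)} = \sup_{v \in \varphi^{-1}(w)} \frac{\mu(v)}{\mu(\varphi(v))}\modu{\psi(v)}$; taking the supremum over $w$ in the range of $\varphi$ recovers exactly $\sigma_{\psi,\varphi}$, so $\|W_{\psi,\varphi}\| = \sigma_{\psi,\varphi}$ and boundedness forces $\sigma_{\psi,\varphi} < \infty$.

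For part (b)(i), when $\varphi$ has finite range: boundedness on $\lilLmuinf$ certainly requires $\sigma_{\psi,\varphi} < \infty$ as in (a). The extra content is that $W_{\psi,\varphi}$ must map into $\lilLmuinf$, i.e.\ $\lim_{\modu{v}\to\infty}\mu(v)\modu{(W_{\psi,\varphi}f)(v)} = 0$ for every $f \in \lilLmuinf$. I would first observe that since $\varphi$ has finite range, $\mu(\varphi(v))$ is bounded above and below by positive constants $m \le \mu(\varphi(v)) \le M$, and $\mu(\varphi(v))\modu{f(\varphi(v))}$ is bounded by $\|f\|_\mu$ for any $f \in \Lmuinf$; combining with the displayed identity, $\mu(v)\modu{(W_{\psi,\varphi}f)(v)} \le \frac{\mu(v)}{\mu(\varphi(v))}\modu{\psi(v)}\|f\|_\mu$, whose limit superior is $\xi_{\psi,\varphi}\|f\|_\mu$ (when this limit exists) — so $\xi_{\psi,\varphi} = 0$ is sufficient for mapping into $\lilLmuinf$, and the norm equality $\|W_{\psi,\varphi}\| = \sigma_{\psi,\varphi}$ then follows exactly as in (a) since the test functions $g_w$ lie in $\lilLmuinf$. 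For necessity of $\xi_{\psi,\varphi} = 0$: apply $W_{\psi,\varphi}$ to the constant function $\mathbbm{1}$, or more carefully to a function that equals $\frac{1}{\mu}$ on the (finite) range of $\varphi$; since $\varphi$ has finite range this function lies in $\lilLmuinf$, and $W_{\psi,\varphi}$ applied to it gives $v \mapsto \frac{\psi(v)}{\mu(\varphi(v))}$, whose weighted value is precisely $\frac{\mu(v)}{\mu(\varphi(v))}\modu{\psi(v)}$, forcing $\xi_{\psi,\varphi} = 0$. Remark \ref{Remark:xi_equivalence} can be invoked to rephrase this if desired.

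For part (b)(ii), with $\varphi$ of infinite range and $\mu$ typical: the forward direction is the clean one — if $W_{\psi,\varphi}$ is bounded on $\lilLmuinf$ then $\sigma_{\psi,\varphi} < \infty$ by (a), and applying $W_{\psi,\varphi}$ to the constant function $\mathbbm{1}$ (which lies in $\lilLmuinf$ by Lemma \ref{Lemma:ConstantsTypicalWeight} since $\mu$ is typical) yields $W_{\psi,\varphi}\mathbbm{1} = \psi$, so $\psi \in \lilLmuinf$. For the converse, assume $\psi \in \lilLmuinf$ and $\sigma_{\psi,\varphi} < \infty$; boundedness on $\Lmuinf$ is automatic from (a), so it remains to show $W_{\psi,\varphi}f \in \lilLmuinf$ whenever $f \in \lilLmuinf$. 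Here is where I expect the main obstacle: unlike case (i), $\mu(\varphi(v))$ need not be bounded below, so the factor $\frac{1}{\mu(\varphi(v))}$ can blow up and one cannot simply dominate by $\sigma_{\psi,\varphi}\|f\|_\mu$ and be done — one genuinely needs to exploit the decay of $f$ along the sequence $\varphi(v)$. The strategy is a case split on $v$ with $\modu{v}$ large: given $\varepsilon > 0$, use $\psi \in \lilLmuinf$ to find $R$ with $\mu(v)\modu{\psi(v)} < \varepsilon$ for $\modu{v} > R$; then for such $v$, either $\modu{\varphi(v)}$ is large — in which case $\mu(\varphi(v))\modu{f(\varphi(v))}$ is small because $f \in \lilLmuinf$, and $\mu(v)\modu{(W_{\psi,\varphi}f)(v)} \le \frac{\mu(v)}{\mu(\varphi(v))}\modu{\psi(v)} \cdot \mu(\varphi(v))\modu{f(\varphi(v))} \le \sigma_{\psi,\varphi}\cdot(\text{small})$ — or $\modu{\varphi(v)}$ stays bounded, say $\varphi(v)$ lies in the finite set $\{u : \modu{u} \le N\}$, so $\mu(\varphi(v)) \ge m_N > 0$ and $\modu{f(\varphi(v))} \le \max_{\modu{u}\le N}\modu{f(u)} =: C_N$, giving $\mu(v)\modu{(W_{\psi,\varphi}f)(v)} \le \frac{C_N}{m_N}\,\mu(v)\modu{\psi(v)} < \frac{C_N}{m_N}\varepsilon$. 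Choosing $N$ first (depending on $f$ and $\varepsilon$) and then $R$ large enough for both alternatives makes $\mu(v)\modu{(W_{\psi,\varphi}f)(v)}$ small for all large $\modu{v}$, which is what we need. The bookkeeping in interleaving these thresholds is the only delicate point; everything else is the identity above plus the three lemmas.
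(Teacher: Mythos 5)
Your proposal is correct and follows essentially the same route as the paper: the upper bound $\|W_{\psi,\varphi}f\|_\mu \leq \sigma_{\psi,\varphi}\|f\|_\mu$ via Lemma \ref{Lemma:point_evaluation_bound}, lower bounds via the test functions $\frac{1}{\mu}\bigchi_w$ and $\frac{1}{\mu}\bigchi_{\varphi(T)}$, the constant function $1$ (legitimate since $\mu$ is typical) for the forward direction of (b)(ii), and the same two-threshold case split on $\modu{\varphi(v)}$ for its converse. The only cosmetic difference is that for the lower bound in (a) the paper tests against the single function $1/\mu$ rather than the family $\frac{1}{\mu}\bigchi_w$; both yield $\|W_{\psi,\varphi}\| = \sigma_{\psi,\varphi}$.
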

	
	\noindent In the remainder of the section, we provide proofs to the elements of the above theorem, along with useful lemmas in a more digestible format.
	
	\begin{theorem}\label{Theorem:bounded_criteria_Lmuinfty}
		Suppose $\psi$ is a function on $T$ and $\varphi$ is a self-map of $T$.  Then $W_{\psi,\varphi}$ is bounded on $\Lmuinf$ if and only if  $\sigma_{\psi,\varphi}$ is finite.  Moreover, it holds that \[\|W_{\psi,\varphi}\| = \sigma_{\psi,\varphi}.\]
	\end{theorem}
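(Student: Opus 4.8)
The plan is to prove the boundedness criterion and the norm identity together by establishing the two inequalities $\|W_{\psi,\varphi}\| \le \sigma_{\psi,\varphi}$ and $\sigma_{\psi,\varphi} \le \|W_{\psi,\varphi}\|$, the first under the hypothesis $\sigma_{\psi,\varphi} < \infty$ and the second under the hypothesis that $W_{\psi,\varphi}$ is bounded.

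First, assuming $\sigma_{\psi,\varphi} < \infty$, I would take an arbitrary $f \in \Lmuinf$ and estimate, for each $v \in T$,
\[
\mu(v)\modu{(W_{\psi,\varphi}f)(v)} = \frac{\mu(v)}{\mu(\varphi(v))}\modu{\psi(v)}\cdot\mu(\varphi(v))\modu{f(\varphi(v))} \le \sigma_{\psi,\varphi}\,\|f\|_\mu,
\]
where the factorization is legitimate because $\mu$ is a positive function, so $\mu(\varphi(v)) > 0$, and the final bound uses $\mu(\varphi(v))\modu{f(\varphi(v))} \le \|f\|_\mu$. Taking the supremum over $v \in T$ gives $W_{\psi,\varphi}f \in \Lmuinf$ with $\|W_{\psi,\varphi}f\|_\mu \le \sigma_{\psi,\varphi}\|f\|_\mu$, hence $W_{\psi,\varphi}$ is bounded and $\|W_{\psi,\varphi}\| \le \sigma_{\psi,\varphi}$.

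For the reverse direction, assuming $W_{\psi,\varphi}$ is bounded, I would test against the atoms furnished by Lemma \ref{Lemma:1/mu_in_lilLmuinf}. Fix $v_0 \in T$ and set $w = \varphi(v_0)$; the function $g(v) = \frac{1}{\mu(v)}\bigchi_w(v)$ lies in $\lilLmuinf \subseteq \Lmuinf$ with $\|g\|_\mu = 1$. Since $(W_{\psi,\varphi}g)(v) = \psi(v)\frac{1}{\mu(\varphi(v))}\bigchi_w(\varphi(v))$ is supported on $\varphi^{-1}(w)$, evaluating the norm of $W_{\psi,\varphi}g$ at the point $v_0$ gives
\[
\|W_{\psi,\varphi}\| \ge \|W_{\psi,\varphi}g\|_\mu \ge \mu(v_0)\modu{\psi(v_0)}\frac{1}{\mu(\varphi(v_0))} = \frac{\mu(v_0)}{\mu(\varphi(v_0))}\modu{\psi(v_0)}.
\]
As $v_0 \in T$ was arbitrary, taking the supremum yields $\sigma_{\psi,\varphi} \le \|W_{\psi,\varphi}\| < \infty$. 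Combining the two inequalities proves both the stated equivalence and the identity $\|W_{\psi,\varphi}\| = \sigma_{\psi,\varphi}$.

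There is no substantial obstacle here; this is the discrete counterpart of the standard computation for weighted composition operators on $H_\nu^\infty(\D)$. The only points that merit a moment's care are the well-definedness of the quotient $\mu(v)/\mu(\varphi(v))$ (immediate, since a weight is positive) and the observation that it suffices to test on the single normalized atoms $\frac{1}{\mu}\bigchi_w$, which already detect the value of the symbol at every point through its preimages under $\varphi$.
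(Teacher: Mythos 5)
Your proof is correct and follows essentially the same route as the paper: the upper bound is the identical pointwise factorization $\mu(v)\modu{(W_{\psi,\varphi}f)(v)} = \frac{\mu(v)}{\mu(\varphi(v))}\modu{\psi(v)}\,\mu(\varphi(v))\modu{f(\varphi(v))}$, and the lower bound comes from testing on unit-norm functions. The only difference is that the paper tests against the single global function $g(v)=1/\mu(v)$ while you test against the atoms $\frac{1}{\mu}\bigchi_{\varphi(v_0)}$; your choice has the mild advantage that these atoms lie in $\lilLmuinf$, which is exactly the variant of the argument the paper deploys later in Lemma \ref{Lemma:Bounded_inf_0}.
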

	
	\begin{proof}
		Suppose $W_{\psi,\varphi}$ is a bounded operator on $\Lmuinf$.  We define the function $g(v) = \frac{1}{\mu(v)}$, which is an element of $\Lmuinf$ with $\|g\|_\mu = 1$.  For a fixed point $w \in T$,  it holds that
		\begin{equation}
			\begin{aligned}
				\frac{\mu(w)}{\mu(\varphi(w))}\modu{\psi(w)} &\leq \sup_{v \in T}\;\frac{\mu(v)}{\mu(\varphi(v))}\modu{\psi(v)}\\
				&= \sup_{v \in T}\;\mu(v)\modu{\psi(v)}\modu{g(\varphi(v))}\\
				&= \|W_{\psi,\varphi} g\|_\mu\\
				&\leq \|W_{\psi,\varphi}\|.
		\end{aligned}\end{equation}  Taking the supremum over all $w \in T$, it follows that $\sigma_{\psi,\varphi} \leq \|W_{\psi,\varphi}\|.$  Thus $\sigma_{\psi,\varphi}$ is finite.
		
		Conversely, suppose $\sigma_{\psi,\varphi}$ is finite and let $f \in \Lmuinf$ with $\|f\|_\mu \leq 1$.  From Lemma \ref{Lemma:point_evaluation_bound}, it follows that
		\begin{equation}\label{Inequality:norm_upper_bound}
			\|W_{\psi,\varphi}f\|_\mu = \sup_{v \in T}\;\mu(v)\modu{\psi(v)}\modu{f(\varphi(v))} \leq \sup_{v \in T}\;\frac{\mu(v)}{\mu(\varphi(v))}\modu{\psi(v)}\|f\|_\mu \leq \sigma_{\psi,\varphi}.
		\end{equation}  Thus, $W_{\psi,\varphi}$ is a bounded operator on $\Lmuinf$.  Taking the supremum over all such functions $f$, we obtain $\|W_{\psi,\varphi}\| \leq \sigma_{\psi,\varphi}$.
	\end{proof}
	
	In much of the analysis for weighted composition operators $W_{\psi,\varphi}$ on the discrete weighted Banach spaces, the behavior of the operator depends on the image of $T$ under $\varphi$.  We study the behavior in terms of $\varphi$ having either finite or infinite range.  
	When a self-map $\varphi$ of $T$ has \textit{infinite range} then, since $T$ is locally finite, there must exist a sequence of points $(v_n)$ in $T$ with $\modu{v_n} \to \infty$ such that $\modu{\varphi(v_n)} \to \infty$.
	
	In the rest of this section, we characterize the boundedness of $W_{\psi,\varphi}$ on $\lilLmuinf$. By the Closed Graph Theorem and the boundedness of the evaluation functionals, to show the weighted composition operator is bounded, it suffices to show it maps $\lilLmuinf$ into itself.  We will exploit this reduction frequently.  In the next two results, we show that $\xi_{\psi,\varphi} = 0$ is a sufficient condition for boundedness on $\lilLmuinf$. and, for $\varphi$ with finite range, it is also necessary.
	
	\begin{lemma}\label{Lemma:boundedness_half_little_space}
		Suppose $\psi$ is a function on $T$ and $\varphi$ is a self-map of $T$.  If $\xi_{\psi,\varphi} = 0$, then $W_{\psi,\varphi}$ is bounded on $\lilLmuinf$.  Moreover, it holds that $\|W_{\psi,\varphi}\| \leq \sigma_{\psi,\varphi}$.
	\end{lemma}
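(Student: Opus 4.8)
The plan is to first observe that the hypothesis $\xi_{\psi,\varphi}=0$ is strictly stronger than $\sigma_{\psi,\varphi}<\infty$: a function on $T$ whose limit along $\modu{v}\to\infty$ is $0$ must be bounded outside a finite set $\{v:\modu{v}<M\}$, and that finite set contributes only finitely many values of $\frac{\mu(v)}{\mu(\varphi(v))}\modu{\psi(v)}$. Hence $\sigma_{\psi,\varphi}$ is finite, and by Theorem \ref{Theorem:bounded_criteria_Lmuinfty} the operator $W_{\psi,\varphi}$ is bounded on $\Lmuinf$ with $\|W_{\psi,\varphi}\|=\sigma_{\psi,\varphi}$. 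It therefore remains only to show $W_{\psi,\varphi}$ maps $\lilLmuinf$ into itself; the norm bound $\|W_{\psi,\varphi}\|\le\sigma_{\psi,\varphi}$ on $\lilLmuinf$ is then immediate since $\lilLmuinf$ is a closed subspace of $\Lmuinf$ (and one sees this directly from Inequality \eqref{Inequality:norm_upper_bound}, which holds verbatim for $f\in\lilLmuinf$).

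For the invariance, fix $f\in\lilLmuinf$, so $\mu(w)\modu{f(w)}\to 0$ as $\modu{w}\to\infty$ and in particular $\sup_{w\in T}\mu(w)\modu{f(w)}=\|f\|_\mu<\infty$. For $v\in T$ I would use the factorization
\[
\mu(v)\modu{\psi(v)}\modu{f(\varphi(v))}
=\frac{\mu(v)}{\mu(\varphi(v))}\modu{\psi(v)}\cdot\mu(\varphi(v))\modu{f(\varphi(v))}
\le\frac{\mu(v)}{\mu(\varphi(v))}\modu{\psi(v)}\,\|f\|_\mu .
\]
Since $\xi_{\psi,\varphi}=0$, the first factor on the right tends to $0$ as $\modu{v}\to\infty$, and $\|f\|_\mu$ is a fixed constant, so the left-hand side tends to $0$ as $\modu{v}\to\infty$. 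Thus $W_{\psi,\varphi}f\in\lilLmuinf$, which together with boundedness of the point-evaluation functionals and the Closed Graph Theorem (as noted before the statement) shows $W_{\psi,\varphi}$ is bounded on $\lilLmuinf$, and combining with the paragraph above gives $\|W_{\psi,\varphi}\|\le\sigma_{\psi,\varphi}$.

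There is no real obstacle here; the only point requiring a moment's care is the very first step, namely spelling out why $\xi_{\psi,\varphi}=0$ forces $\sigma_{\psi,\varphi}<\infty$ (i.e. that the boundedness of $\frac{\mu(v)}{\mu(\varphi(v))}\modu{\psi(v)}$ near the root is automatic because $T$ is locally finite, so only finitely many $v$ have $\modu{v}$ below any threshold). Everything else is the elementary factorization estimate above.
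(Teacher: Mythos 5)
Your proof is correct and follows essentially the same route as the paper's: establish $\sigma_{\psi,\varphi}<\infty$ from $\xi_{\psi,\varphi}=0$, use the factorization $\mu(v)\modu{\psi(v)}\modu{f(\varphi(v))}\leq\frac{\mu(v)}{\mu(\varphi(v))}\modu{\psi(v)}\|f\|_\mu$ to show $\lilLmuinf$ is mapped into itself, and invoke the Closed Graph Theorem reduction together with inequality \eqref{Inequality:norm_upper_bound} for the norm bound. Your extra remark spelling out why local finiteness of $T$ makes $\xi_{\psi,\varphi}=0$ imply $\sigma_{\psi,\varphi}<\infty$ is a welcome elaboration of a step the paper leaves implicit.
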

	
	\begin{proof}
		Suppose $\xi_{\psi,\varphi} = 0$.  It follows that $\sigma_{\psi,\varphi}$ is finite, and thus $W_{\psi,\varphi}$ is bounded as an operator on $\Lmuinf$.  To show $W_{\psi,\varphi}$ is bounded on $\lilLmuinf$, it suffices to show $W_{\psi,\varphi}$ maps $\lilLmuinf$ into $\lilLmuinf$.  Let $f \in \lilLmuinf$ and $(v_n)$ be a sequence in $T$ with $\modu{v_n} \to \infty$ as $n \to \infty$.  From Lemma \ref{Lemma:point_evaluation_bound} it follows that
		\[\mu(v_n)\modu{\psi(v_n)}\modu{f(\varphi(v_n))} \leq \frac{\mu(v_n)}{\mu(\varphi(v_n))}\modu{\psi(v_n)}\|f\|_\mu \to 0\] as $n \to \infty$.  Thus $W_{\psi,\varphi}$ is bounded on $\lilLmuinf$.  Moreover, from  \eqref{Inequality:norm_upper_bound} it follows that $\|W_{\psi,\varphi}\| \leq \sigma_{\psi,\varphi}$.
	\end{proof}
	
	\begin{theorem}\label{Theorem:bounded_criteria_Lmu0_elliptic}
		Suppose $\psi$ is a function on $T$ and $\varphi$ is a self-map of $T$ with finite range.  Then $W_{\psi,\varphi}$ is bounded on $\lilLmuinf$ if and only if $\xi_{\psi,\varphi} = 0.$  Moreover, it holds that \[\|W_{\psi,\varphi}\| = \sigma_{\psi,\varphi}.\]
	\end{theorem}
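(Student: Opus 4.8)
The plan is to treat the two implications separately and then pin down the norm. The sufficiency of the condition $\xi_{\psi,\varphi}=0$ requires nothing new: Lemma~\ref{Lemma:boundedness_half_little_space} already shows that $\xi_{\psi,\varphi}=0$ implies $W_{\psi,\varphi}$ is bounded on $\lilLmuinf$ with $\|W_{\psi,\varphi}\|\le\sigma_{\psi,\varphi}$, and that argument does not use anything about the range of $\varphi$. So the real content is the converse, where finiteness of the range enters, together with the matching lower bound for the operator norm.

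For the necessity, I would assume $W_{\psi,\varphi}$ is bounded on $\lilLmuinf$ and write the finite range as $\varphi(T)=\{w_1,\dots,w_k\}$. By Remark~\ref{Remark:xi_equivalence} it suffices to prove $\lim_{\modu{v}\to\infty}\mu(v)\modu{\psi(v)}=0$. The natural test functions are $g_j(v)=\frac{1}{\mu(v)}\bigchi_{w_j}(v)$, which lie in $\lilLmuinf$ with $\|g_j\|_\mu=1$ by Lemma~\ref{Lemma:1/mu_in_lilLmuinf}. A direct computation gives $W_{\psi,\varphi}g_j(v)=\frac{\psi(v)}{\mu(w_j)}$ when $\varphi(v)=w_j$ and $0$ otherwise, so the fact that $W_{\psi,\varphi}g_j\in\lilLmuinf$ forces $\mu(v)\modu{\psi(v)}\to 0$ as $\modu{v}\to\infty$ along the fibre $\varphi^{-1}(\{w_j\})$. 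Since $T=\bigcup_{j=1}^{k}\varphi^{-1}(\{w_j\})$ is a finite union, I would then combine these $k$ statements by taking the maximum of finitely many thresholds (using that a fibre which is finite as a subset of $T$ has bounded length) to conclude $\lim_{\modu{v}\to\infty}\mu(v)\modu{\psi(v)}=0$.

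For the norm identity, the upper bound $\|W_{\psi,\varphi}\|\le\sigma_{\psi,\varphi}$ is already available from Lemma~\ref{Lemma:boundedness_half_little_space}. For the reverse inequality I would fix $w\in T$ and apply $W_{\psi,\varphi}$ to $g(v)=\frac{1}{\mu(v)}\bigchi_{\varphi(w)}(v)$, again a unit vector of $\lilLmuinf$ by Lemma~\ref{Lemma:1/mu_in_lilLmuinf}; evaluating $W_{\psi,\varphi}g$ at the single point $w$ gives $\|W_{\psi,\varphi}g\|_\mu\ge\frac{\mu(w)}{\mu(\varphi(w))}\modu{\psi(w)}$, and taking the supremum over $w\in T$ yields $\|W_{\psi,\varphi}\|\ge\sigma_{\psi,\varphi}$.

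I do not anticipate a genuine obstacle; the only step needing care is the passage from ``$\mu(v)\modu{\psi(v)}\to 0$ on each individual fibre $\varphi^{-1}(\{w_j\})$'' to the global limit, since some fibres may be finite and others infinite. Local finiteness of $T$ makes this routine, and it is exactly the place (together with Remark~\ref{Remark:xi_equivalence}) where the finite-range hypothesis is essential.
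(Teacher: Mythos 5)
Your argument is correct and is essentially the paper's: both rest on testing $W_{\psi,\varphi}$ against the unit vectors $\frac{1}{\mu}\bigchi_{w}$ supported on the finite range, the only difference being that the paper sums your $g_j$'s into the single function $g(v)=\frac{1}{\mu(v)}\bigchi_{\varphi(T)}(v)$, for which $\mu(v)\modu{(W_{\psi,\varphi}g)(v)}=\frac{\mu(v)}{\mu(\varphi(v))}\modu{\psi(v)}$ holds identically, yielding $\xi_{\psi,\varphi}=0$ and $\sigma_{\psi,\varphi}\le\|W_{\psi,\varphi}\|$ in one stroke without the fibre-by-fibre recombination or the detour through Remark~\ref{Remark:xi_equivalence}. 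Your handling of the remaining pieces (sufficiency via Lemma~\ref{Lemma:boundedness_half_little_space} and the norm lower bound via $\frac{1}{\mu}\bigchi_{\varphi(w)}$) matches the paper's.
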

	
	\begin{proof}
		Suppose $W_{\psi,\varphi}$ is bounded on $\lilLmuinf$.  Since $\varphi$ has finite range, it follows that the function $g(v) = \frac{1}{\mu(v)}\bigchi_{\varphi(T)}(v)$ is in $\lilLmuinf$ with $\|g\|_\mu = 1$.  Thus $W_{\psi,\varphi} g$ is in $\lilLmuinf$ as well.  For $v \in T$, we have
		\begin{equation}\label{Equality:elliptic_bounded}
			\begin{aligned}
				\frac{\mu(v)}{\mu(\varphi(v))}\modu{\psi(v)} &= \frac{\mu(v)}{\mu(\varphi(v))}\modu{\psi(v)}\bigchi_{\varphi(T)}(\varphi(v))\\
				&= \mu(v)\modu{\psi(v)}\modu{g(\varphi(v))}\\
				&= \mu(v)\modu{(W_{\psi,\varphi} g)(v)}.
			\end{aligned}
		\end{equation}  It immediately follows that $\xi_{\psi,\varphi} = 0$.  Moreover, it holds that $\sigma_{\psi,\varphi} \leq \|W_{\psi,\varphi}\|$.  The converse follows from Lemma \ref{Lemma:boundedness_half_little_space}.
	\end{proof}
	
	The following lemma shows that boundedness on $\lilLmuinf$ implies boundedness on $\Lmuinf$.  With this result, we characterize the boundedness of $W_{\psi,\varphi}$ on $\lilLmuinf$ under a typical weight.  As will be shown in future sections, the inverse image of $\varphi(w) \in T$ under $\varphi$ will play a role in determining characteristics of the weighted composition operator.  To this end, for a point $w \in T$ and $\varphi$ a self-map of $T$, we define $S_w = \varphi^{-1}(\varphi(w))$.%\{v \in T : \varphi(v) = \varphi(w)\}$. 
	
	\begin{lemma}\label{Lemma:Bounded_inf_0}
		Suppose $\psi$ is a function on $T$ and $\varphi$ is a self-map of $T$.  If $W_{\psi,\varphi}$ is bounded on $\lilLmuinf$, then $W_{\psi,\varphi}$ is bounded on $\Lmuinf$.
	\end{lemma}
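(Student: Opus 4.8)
The plan is to reduce to the already-established criterion of Theorem~\ref{Theorem:bounded_criteria_Lmuinfty}: it suffices to show that the hypothesis forces $\sigma_{\psi,\varphi}$ to be finite. To that end I would feed the operator the normalized ``bump'' functions supplied by Lemma~\ref{Lemma:1/mu_in_lilLmuinf}. Namely, for each $w \in T$ set $g_w(v) = \frac{1}{\mu(v)}\bigchi_{w}(v)$; by Lemma~\ref{Lemma:1/mu_in_lilLmuinf} we have $g_w \in \lilLmuinf$ with $\|g_w\|_\mu = 1$.

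Next I would compute the action of $W_{\psi,\varphi}$ on these test functions. For a fixed $v \in T$, taking $w = \varphi(v)$ gives
\begin{equation}
	\mu(v)\modu{\bigl(W_{\psi,\varphi}g_{\varphi(v)}\bigr)(v)} = \mu(v)\modu{\psi(v)}\,\frac{1}{\mu(\varphi(v))}\bigchi_{\varphi(v)}(\varphi(v)) = \frac{\mu(v)}{\mu(\varphi(v))}\modu{\psi(v)},
\end{equation}
so that $\frac{\mu(v)}{\mu(\varphi(v))}\modu{\psi(v)} \leq \|W_{\psi,\varphi}g_{\varphi(v)}\|_\mu$. Since $W_{\psi,\varphi}$ is assumed bounded on $\lilLmuinf$ and $\|g_{\varphi(v)}\|_\mu = 1$, the right-hand side is at most the operator norm of $W_{\psi,\varphi}$ on $\lilLmuinf$. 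Taking the supremum over all $v \in T$ yields $\sigma_{\psi,\varphi} \leq \|W_{\psi,\varphi}\| < \infty$.

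Finally, with $\sigma_{\psi,\varphi}$ finite, Theorem~\ref{Theorem:bounded_criteria_Lmuinfty} immediately gives that $W_{\psi,\varphi}$ is bounded on $\Lmuinf$ (indeed with $\|W_{\psi,\varphi}\|_{\Lmuinf} = \sigma_{\psi,\varphi}$), completing the argument. There is no real obstacle here: the only point requiring care is that the test functions $g_w$ genuinely lie in the little space $\lilLmuinf$ (not merely in $\Lmuinf$), which is exactly the content of Lemma~\ref{Lemma:1/mu_in_lilLmuinf}; everything else is a one-line computation plus an appeal to the $\Lmuinf$ boundedness criterion.
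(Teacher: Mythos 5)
Your proposal is correct and follows essentially the same route as the paper: both test the operator on the normalized bump functions $g(v)=\frac{1}{\mu(v)}\bigchi_{\varphi(w)}(v)$ from Lemma \ref{Lemma:1/mu_in_lilLmuinf} to bound $\sigma_{\psi,\varphi}$ by the $\lilLmuinf$ operator norm, then invoke Theorem \ref{Theorem:bounded_criteria_Lmuinfty}. Your version is marginally more direct in that you evaluate at the single point $v$ rather than passing through the supremum over $S_w$, but the argument is the same.
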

	
	\begin{proof}
		By Theorem \ref{Theorem:bounded_criteria_Lmuinfty}, it suffices to show that $\sigma_{\psi,\varphi} < \infty$.  Fix $w \in T$ and define the function $g(v) = \frac{1}{\mu(v)}\bigchi_{\varphi(w)}(v)$.  From Lemma \ref{Lemma:1/mu_in_lilLmuinf}, $g \in \lilLmuinf$ with $\|g\|_\mu = 1$.  Define $Y = \{f \in \lilLmuinf : \|f\|_\mu = 1\}$.  Then
		\begin{align*}
			\frac{\mu(w)}{\mu(\varphi(w))}\modu{\psi(w)} &\leq \sup_{v \in S_w}\;\frac{\mu(v)}{\mu(\varphi(v))}\modu{\psi(v)}\\
			&= \sup_{v \in T}\;\frac{\mu(v)}{\mu(\varphi(v))}\modu{\psi(v)}\bigchi_{\varphi(w)}(\varphi(v))\\
			&= \|W_{\psi,\varphi} g\|_\mu\\
			&\leq \sup_{f \in Y}\;\|W_{\psi,\varphi} f\|_\mu\\
			&= \left\|W_{\psi,\varphi}:\lilLmuinf\to\lilLmuinf\right\|.
		\end{align*} Taking the supremum over all $w \in T$, we obtain $\sigma_{\psi,\varphi} \leq \left\|W_{\psi,\varphi}:\lilLmuinf \to \lilLmuinf\right\|$.
		Since $W_{\psi,\varphi}$ is bounded on $\lilLmuinf$, then $\sigma_{\psi,\varphi} < \infty$ and hence $W_{\psi,\varphi}$ is bounded on $\Lmuinf$ by Theorem \ref{Theorem:bounded_criteria_Lmuinfty}.
	\end{proof}
	
	\begin{theorem}
		Let $\mu$ be a typical weight.  Suppose $\psi$ is a function on $T$ and $\varphi$ is a self-map of $T$ with infinite range.  Then $W_{\psi,\varphi}$ is bounded on $\lilLmuinf$ if and only if $\psi \in \lilLmuinf$ and $\sigma_{\psi,\varphi} < \infty$.
	\end{theorem}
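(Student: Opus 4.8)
The plan is to establish the two implications separately, leaning on Theorem~\ref{Theorem:bounded_criteria_Lmuinfty}, Lemma~\ref{Lemma:Bounded_inf_0}, and Lemma~\ref{Lemma:ConstantsTypicalWeight}, and using the Closed Graph Theorem reduction noted just above Lemma~\ref{Lemma:boundedness_half_little_space}.

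For necessity, suppose $W_{\psi,\varphi}$ is bounded on $\lilLmuinf$. First, Lemma~\ref{Lemma:Bounded_inf_0} gives at once that $W_{\psi,\varphi}$ is bounded on $\Lmuinf$, so $\sigma_{\psi,\varphi} < \infty$ by Theorem~\ref{Theorem:bounded_criteria_Lmuinfty}. To obtain $\psi \in \lilLmuinf$, I would use that $\mu$ is typical: by Lemma~\ref{Lemma:ConstantsTypicalWeight} the constant function $1$ lies in $\lilLmuinf$, and $W_{\psi,\varphi}1 = \psi$; since $W_{\psi,\varphi}$ carries $\lilLmuinf$ into itself, $\psi \in \lilLmuinf$. (Note this step is where the typical weight is genuinely used.)

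For sufficiency, suppose $\psi \in \lilLmuinf$ and $\sigma_{\psi,\varphi} < \infty$. Then Theorem~\ref{Theorem:bounded_criteria_Lmuinfty} makes $W_{\psi,\varphi}$ bounded on $\Lmuinf$, so it suffices to show $W_{\psi,\varphi}$ maps $\lilLmuinf$ into $\lilLmuinf$. Fix $f \in \lilLmuinf$ and a sequence $(v_n)$ in $T$ with $\modu{v_n} \to \infty$; the goal is $\mu(v_n)\modu{\psi(v_n)}\modu{f(\varphi(v_n))} \to 0$. I would argue by contradiction via subsequences: if not, pass to a subsequence on which this quantity stays at least $\delta > 0$, then to a further subsequence on which either $\modu{\varphi(v_n)} \to \infty$ or $\modu{\varphi(v_n)}$ is bounded. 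In the first case, bound the quantity by $\sigma_{\psi,\varphi}\,\mu(\varphi(v_n))\modu{f(\varphi(v_n))} \to 0$ since $f \in \lilLmuinf$. In the second case, local finiteness of $T$ forces $\varphi(v_n)$ to take values in a finite set, so $\modu{f(\varphi(v_n))}$ is bounded, and the quantity is at most a constant times $\mu(v_n)\modu{\psi(v_n)} \to 0$ since $\psi \in \lilLmuinf$; either way we contradict the lower bound $\delta$.

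The argument is largely routine; the only mild obstacle is the sufficiency direction, where an arbitrary sequence $(v_n)$ may have an image sequence $(\varphi(v_n))$ that mixes bounded and unbounded behavior, which is handled cleanly by the subsequence/contradiction device above. (The hypothesis that $\varphi$ has infinite range is not, in fact, needed for either implication; it simply distinguishes the present case from the finite-range case of Theorem~\ref{Theorem:bounded_criteria_Lmu0_elliptic}, where the condition $\sigma_{\psi,\varphi} < \infty$ is automatic given $\psi \in \lilLmuinf$.)
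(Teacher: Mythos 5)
Your proof is correct and follows essentially the same route as the paper: the necessity direction (Lemma \ref{Lemma:Bounded_inf_0} plus Theorem \ref{Theorem:bounded_criteria_Lmuinfty} for $\sigma_{\psi,\varphi}<\infty$, and $W_{\psi,\varphi}1=\psi$ via Lemma \ref{Lemma:ConstantsTypicalWeight} for $\psi\in\lilLmuinf$) is identical, and the sufficiency direction uses the same case split on whether $\modu{\varphi(v)}$ is large (use $\sigma_{\psi,\varphi}$ and $f\in\lilLmuinf$) or small (use local finiteness and $\psi\in\lilLmuinf$), merely repackaged as a subsequence/contradiction argument instead of the paper's direct $\varepsilon$--$N$ estimate. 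Your parenthetical observation that the infinite-range hypothesis is not actually used is also accurate.
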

	
	\begin{proof}
		First suppose $W_{\psi,\varphi}$ is bounded on $\lilLmuinf$.  Since $\mu$ is a typical weight, the constant function $1$ is an element of $\lilLmuinf$ from Lemma \ref{Lemma:ConstantsTypicalWeight}.  From the boundedness of $W_{\psi,\varphi}$, we have that $W_{\psi,\varphi} 1 = \psi$ is also an element of $\lilLmuinf$.  In addition, Lemma \ref{Lemma:Bounded_inf_0} implies $W_{\psi,\varphi}$ is bounded as an operator on $\Lmuinf$, and thus $\sigma_{\psi,\varphi} < \infty$ by Theorem \ref{Theorem:bounded_criteria_Lmuinfty}.
		
		Next, suppose $\psi \in \lilLmuinf$ and $\sigma_{\psi,\varphi} < \infty$.  To prove the boundedness of $W_{\psi,\varphi}$, it suffices to show the operator maps $\lilLmuinf$ into $\lilLmuinf$.  Let $\varepsilon > 0$ and $f \in \lilLmuinf$.  There exists a natural number $N_1$ such that if $\modu{v} > N_1$, then $\mu(v)\modu{f(v)} < \frac{\varepsilon}{\sigma_{\psi,\varphi}}$.  Define $m = 1+\sup_{\modu{v} \leq N_1}\;\modu{f(v)}$ and observe this quantity is finite and non-zero since the set $\{v \in T : \modu{v} \leq N_1\}$ is finite.  In addition, there exists a natural number $N_2$ such that if $\modu{v} > N_2$, then $\mu(v)\modu{\psi(v)} < \frac{\varepsilon}{m}$.
		
		Let $v \in T$ such that $\modu{v} > N_2$.  If $\modu{\varphi(v)} > N_1$, then
		\begin{align*}\mu(v)\modu{\psi(v)}\modu{f(\varphi(v))} &= \frac{\mu(v)}{\mu(\varphi(v))}\modu{\psi(v)}\mu(\varphi(v))\modu{f(\varphi(v))}\\
			&\leq \sigma_{\psi,\varphi}\mu(\varphi(v))\modu{f(\varphi(v))}\\&< \varepsilon.\end{align*}  On the other hand, if $\modu{\varphi(v)} \leq N_1$, then \[\mu(v)\modu{\psi(v)}\modu{f(\varphi(v))} \leq \mu(v)\modu{\psi(v)}\sup_{\modu{v}\leq N_1}\;\modu{f(w)} < \mu(v)\modu{\psi(v)}m < \varepsilon.\]  Thus \[\lim_{\modu{v} \to \infty} \mu(v)\modu{\psi(v)}\modu{f(\varphi(v))} = 0\] and $W_{\psi,\varphi} f \in \lilLmuinf$.
	\end{proof}
	
	We complete this section with boundedness characteristics for composition operators $C_\varphi$ and multiplication operators $M_\psi$ on $\lilLmuinf$, which were not studied in \cite{AllenPons:2016} or \cite{AllenCraig:2015}.  However, bounded composition operators on $\lilLmuinf$ are further studied in \cite{AllenColonnaMartinesPons}.  For the composition operator induced by a self-map $\varphi$ with finite range, the characterization for boundedness from Theorem \ref{Theorem:bounded_criteria_Lmu0_elliptic} translates to $\lim_{\modu{v}\to\infty} \frac{\mu(v)}{\mu(\varphi(v))} = 0$.  From Remark \ref{Remark:xi_equivalence}, this is equivalent to $\lim_{\modu{v}\to\infty} \mu(v) = 0$, i.e., $\mu$ being a typical weight.
	
	\begin{corollary}
		Let $\psi$ be a function on $T$ and $\varphi$ a self-map of $T$.
		\begin{enumerate}
			\item[(a)] For the operator $C_\varphi:\lilLmuinf\to\lilLmuinf$,
			\begin{enumerate}
				\item[i.] If $\varphi$ has finite range, then $C_\varphi$ is bounded on $\lilLmuinf$ if and only if $\mu$ is a typical weight.
				\item[ii.] If $\varphi$ has infinite range and $\mu$ is a typical weight, then $C_\varphi$ is bounded on $\lilLmuinf$ if and only if $C_\varphi$ is bounded on $\Lmuinf$.
			\end{enumerate}
			\item[(b)] If $\mu$ is a typical weight, then $M_\psi:\lilLmuinf \to \lilLmuinf$ is bounded if and only if $\psi \in \lilLmuinf$.
		\end{enumerate}
	\end{corollary}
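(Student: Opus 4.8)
The plan is to deduce every part by specializing the three boundedness results of this section to the two classical subcases: $\psi\equiv1$ (the composition operator $C_\varphi=W_{1,\varphi}$) and $\varphi$ the identity self-map of $T$ (the multiplication operator $M_\psi$). In these cases the controlling quantities simplify at once: $\sigma_{1,\varphi}=\sup_{v\in T}\mu(v)/\mu(\varphi(v))$ and $\xi_{1,\varphi}=\lim_{\modu v\to\infty}\mu(v)/\mu(\varphi(v))$, while for the identity map $\sigma_{\psi,\varphi}$ becomes $\sup_{v\in T}\modu{\psi(v)}$ and the range is all of $T$, hence infinite. So most of the work is bookkeeping, and the only genuinely delicate point occurs in (b).

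For (a)(i) I would apply Theorem \ref{Theorem:bounded_criteria_Lmu0_elliptic} with $\psi\equiv1$: since $\varphi$ has finite range, $C_\varphi$ is bounded on $\lilLmuinf$ exactly when $\xi_{1,\varphi}=0$, and Remark \ref{Remark:xi_equivalence} rewrites $\xi_{1,\varphi}=0$ as $\lim_{\modu v\to\infty}\mu(v)=0$, i.e.\ as $\mu$ being typical. This is precisely the reduction already recorded in the paragraph preceding the corollary, so nothing further is required.

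For (a)(ii) the plan is to invoke the theorem immediately preceding this corollary with $\psi\equiv1$: with $\varphi$ of infinite range and $\mu$ typical it gives that $C_\varphi$ is bounded on $\lilLmuinf$ if and only if $1\in\lilLmuinf$ and $\sigma_{1,\varphi}<\infty$. By Lemma \ref{Lemma:ConstantsTypicalWeight} the first condition is automatic for a typical $\mu$, so boundedness on $\lilLmuinf$ is equivalent to $\sigma_{1,\varphi}<\infty$, which by Theorem \ref{Theorem:bounded_criteria_Lmuinfty} is exactly boundedness of $C_\varphi$ on $\Lmuinf$.

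For (b), with $\mu$ typical, $M_\psi=W_{\psi,\varphi}$ for $\varphi$ the identity map, which has infinite range; the preceding theorem then yields that $M_\psi$ is bounded on $\lilLmuinf$ if and only if $\psi\in\lilLmuinf$ and $\sigma_{\psi,\varphi}=\sup_{v\in T}\modu{\psi(v)}$ is finite. The forward implication is immediate — indeed $\psi=M_\psi 1$ with $1\in\lilLmuinf$, and alternatively Lemma \ref{Lemma:Bounded_inf_0} together with Theorem \ref{Theorem:bounded_criteria_Lmuinfty} delivers the finiteness of $\sigma_{\psi,\varphi}$. The step I expect to be the main obstacle is the converse: the preceding theorem carries the extra requirement $\sigma_{\psi,\varphi}<\infty$, that is, that $\psi$ itself be bounded, so to arrive at the stated criterion one must reconcile boundedness of $\psi$ with membership in $\lilLmuinf$ under a typical weight. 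One direction of this is painless, since a bounded $\psi$ obeys $\mu(v)\modu{\psi(v)}\le\bigl(\sup_{w\in T}\modu{\psi(w)}\bigr)\mu(v)\to0$ and hence automatically lies in $\lilLmuinf$; pinning down exactly why the reverse passage goes through is the one place where the typicality of $\mu$ must be used with care, and everything else is a direct substitution into the section's theorems.
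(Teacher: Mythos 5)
Your treatment of parts (a)(i) and (a)(ii) follows exactly the route the paper intends: (a)(i) is the reduction already carried out in the paragraph preceding the corollary (Theorem \ref{Theorem:bounded_criteria_Lmu0_elliptic} with $\psi\equiv 1$ plus Remark \ref{Remark:xi_equivalence}), and (a)(ii) is the specialization $\psi\equiv 1$ of the typical-weight theorem together with Lemma \ref{Lemma:ConstantsTypicalWeight} and Theorem \ref{Theorem:bounded_criteria_Lmuinfty}. Nothing further is needed there.

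For (b) you have correctly located the crux, but the ``reverse passage'' you defer cannot be made to go through. Specializing the typical-weight theorem to $\varphi=\mathrm{id}$ (which has infinite range, since $T$ is unbounded and locally finite) gives that $M_\psi$ is bounded on $\lilLmuinf$ if and only if $\psi\in\lilLmuinf$ and $\sigma_{\psi,\mathrm{id}}=\sup_{v\in T}\modu{\psi(v)}<\infty$; as you observe, the second condition implies the first when $\mu$ is typical, so the correct reduction is to $\sup_{v\in T}\modu{\psi(v)}<\infty$, i.e.\ to boundedness of $M_\psi$ on $\Lmuinf$. The condition $\psi\in\lilLmuinf$ alone is strictly weaker: take $\mu(v)=(1+\modu{v}^2)^{-1}$, which is typical, and $\psi(v)=\modu{v}$. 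Then $\mu(v)\modu{\psi(v)}\to 0$, so $\psi\in\lilLmuinf$; yet $f(v)=(1+\modu{v}^2)^{1/2}$ lies in $\lilLmuinf$ while $\mu(v)\modu{\psi(v)}\modu{f(v)}\to 1$, so $M_\psi f\notin\lilLmuinf$ and $M_\psi$ is not bounded on $\lilLmuinf$ (consistently, $\sigma_{\psi,\mathrm{id}}=\infty$, so Lemma \ref{Lemma:Bounded_inf_0} with Theorem \ref{Theorem:bounded_criteria_Lmuinfty} already rules out boundedness). So the gap in your argument is genuine and in fact reflects an error in the statement of part (b): the characterizing condition should be that $\psi$ is bounded (equivalently, that $M_\psi$ is bounded on $\Lmuinf$), not that $\psi\in\lilLmuinf$. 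Only the forward implication of (b) as printed --- via $\psi=M_\psi 1$ with $1\in\lilLmuinf$ --- is correct.
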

	
	\section{Compactness and Essential Norm}\label{Section:Compactness}
	In this section, we study the compactness of weighted composition operators on the discrete weighted Banach spaces.  As with boundedness, conditions for compactness depend on the image of $T$ under $\varphi$.  We summarize the main results of this section in the following theorem.
	
	\begin{theorem*}
		Suppose $\psi$ is a function on $T$ and $\varphi$ is a self-map of $T$ for which $W_{\psi,\varphi}$ is bounded on $\Lmuinf$ (respectively $\lilLmuinf$).
		\begin{enumerate}
			\item[(a)] If $\varphi$ has finite range, then $W_{\psi,\varphi}$ is compact on $\Lmuinf$ (respectively $\lilLmuinf$).
			\item[(b)] If $\varphi$ is infinite range, then 
			\begin{enumerate}
				\item[i.] The operator $W_{\psi,\varphi}$ is compact on $\Lmuinf$ (respectively $\lilLmuinf$) if and only if \[\lim_{N \to \infty} \sup_{\modu{\varphi(v)}\geq N}\;\frac{\mu(v)}{\mu(\varphi(v))}\modu{\psi(v)} =0.\]
				\item[ii.] If $\mu$ is a typical weight, then $W_{\psi,\varphi}$ is compact on $\lilLmuinf$ if and only if $\xi_{\psi,\varphi} = 0$.
			\end{enumerate}
		\end{enumerate}
	\end{theorem*}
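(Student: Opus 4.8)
The plan is to reduce everything to one compactness criterion together with short verifications paralleling Section \ref{Section:Bounded}. The criterion I would record first is the discrete analogue of the classical fact for $H^\infty_\nu(\D)$: since $\Lmuinf$ and $\lilLmuinf$ are functional Banach spaces and $T$ is locally finite, if $W_{\psi,\varphi}$ is compact on $X\in\{\Lmuinf,\lilLmuinf\}$, then $\|W_{\psi,\varphi}f_n\|_\mu\to0$ whenever $(f_n)\subseteq X$ is bounded with $f_n\to0$ pointwise on $T$. The proof is routine: extract a subsequence with $W_{\psi,\varphi}f_{n_k}\to h$ in norm, note that $(W_{\psi,\varphi}f_{n_k})(v)=\psi(v)f_{n_k}(\varphi(v))\to0$ for each $v$ by continuity of the point-evaluations $K_v$, so $h\equiv0$, and conclude by the usual subsequence argument. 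Only this direction will be used.

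Part (a) needs no limit condition. Writing $W_{\psi,\varphi}=W_{\psi,\varphi}\circ(\,\cdot\,\bigchi_{\varphi(T)})$, which is valid since $\varphi(v)\in\varphi(T)$ for every $v$, we see that when $\varphi(T)$ is finite the multiplier $\,\cdot\,\bigchi_{\varphi(T)}$ has finite-dimensional range lying in $\lilLmuinf$, so $W_{\psi,\varphi}$, being bounded by hypothesis, has finite rank and is therefore compact on $\Lmuinf$ and on $\lilLmuinf$. For part (b)i, sufficiency is the same idea with a truncation: for $B_N=\{v\in T:\modu{v}\le N\}$, which is finite, put $W_N=W_{\psi,\varphi}\circ(\,\cdot\,\bigchi_{B_N})$, a bounded finite-rank operator on $X$, and use Lemma \ref{Lemma:point_evaluation_bound} to obtain, for $\|f\|_\mu\le1$,
\[\|(W_{\psi,\varphi}-W_N)f\|_\mu=\sup_{\modu{\varphi(v)}>N}\mu(v)\modu{\psi(v)}\modu{f(\varphi(v))}\le\sup_{\modu{\varphi(v)}\ge N}\frac{\mu(v)}{\mu(\varphi(v))}\modu{\psi(v)};\]
the hypothesis forces the right-hand side to $0$, so $W_{\psi,\varphi}$ is a norm limit of compact operators. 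For necessity, if that supremum does not go to $0$ there are $\varepsilon>0$ and points $v_n$ with $\modu{\varphi(v_n)}\to\infty$ and $\frac{\mu(v_n)}{\mu(\varphi(v_n))}\modu{\psi(v_n)}\ge\varepsilon$; by Lemma \ref{Lemma:1/mu_in_lilLmuinf} the functions $g_n(v)=\frac{1}{\mu(v)}\bigchi_{\varphi(v_n)}(v)$ lie in $\lilLmuinf$ with $\|g_n\|_\mu=1$ and $g_n\to0$ pointwise, while $\|W_{\psi,\varphi}g_n\|_\mu\ge\mu(v_n)\modu{\psi(v_n)}\modu{g_n(\varphi(v_n))}=\frac{\mu(v_n)}{\mu(\varphi(v_n))}\modu{\psi(v_n)}\ge\varepsilon$, contradicting the criterion.

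Part (b)ii I would deduce from (b)i. First, $\xi_{\psi,\varphi}=0$ always forces the limit in (b)i: given $\varepsilon>0$ choose $M$ with $\frac{\mu(v)}{\mu(\varphi(v))}\modu{\psi(v)}<\varepsilon$ for $\modu{v}>M$; the ball $\{\modu{v}\le M\}$ is finite, so $\varphi$ maps it into some $\{\modu{w}\le M'\}$, whence every $v$ with $\modu{\varphi(v)}\ge N>M'$ satisfies $\modu{v}>M$ and $\sup_{\modu{\varphi(v)}\ge N}\frac{\mu(v)}{\mu(\varphi(v))}\modu{\psi(v)}\le\varepsilon$. Conversely, since $W_{\psi,\varphi}$ is bounded on $\lilLmuinf$ with $\mu$ typical and $\varphi$ of infinite range, the boundedness theorem of Section \ref{Section:Bounded} gives $\psi\in\lilLmuinf$, i.e.\ $\mu(v)\modu{\psi(v)}\to0$; hence along any sequence $\modu{v_n}\to\infty$, on the subsequence where $\modu{\varphi(v_n)}$ stays bounded $\mu(\varphi(v_n))$ is bounded below so $\frac{\mu(v_n)}{\mu(\varphi(v_n))}\modu{\psi(v_n)}\to0$, and on the subsequence where $\modu{\varphi(v_n)}\to\infty$ the limit from (b)i applies; therefore $\xi_{\psi,\varphi}=0$. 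Combined with (b)i this gives the stated equivalence for $\lilLmuinf$.

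The only genuine care is treating $\Lmuinf$ and $\lilLmuinf$ uniformly: for the non-separable $\Lmuinf$ one must argue through the pointwise-null criterion rather than sequential compactness, and for $\lilLmuinf$ one must check that $W_N$ and the multiplier in (a) actually map into $\lilLmuinf$, which is why they are written as compositions $W_{\psi,\varphi}\circ(\,\cdot\,\bigchi_E)$ with $\bigchi_E$ of finite support, letting the standing boundedness hypothesis do the work. Everything else is bookkeeping with Lemma \ref{Lemma:point_evaluation_bound} and the elementary interplay, via local finiteness, between $\modu{v}\to\infty$ and $\modu{\varphi(v)}\to\infty$.
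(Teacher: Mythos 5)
Your proposal is correct, and it reaches the same characterizations by a noticeably leaner route than the paper. The shared core is the pointwise-null test: you re-derive (correctly) the ``only if'' direction of Lemma \ref{Lemma:Compactness-criterion}, and your truncations $W_N=W_{\psi,\varphi}\circ(\,\cdot\,\bigchi_{B_N})$ are exactly the paper's operators $W_{\psi,\varphi}A_N$, while your test functions $g_n=\frac{1}{\mu}\bigchi_{\varphi(v_n)}$ are the same ones the paper uses for its lower bound. The differences: for (a) the paper runs the sequential criterion using uniform convergence on the finite set $\varphi(T)$, whereas you factor $W_{\psi,\varphi}$ through the finite-rank multiplier $f\mapsto f\bigchi_{\varphi(T)}$ and get compactness for free from finite rank --- cleaner, and it handles $\Lmuinf$ and $\lilLmuinf$ uniformly. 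For (b)i the paper proves the exact essential norm identity (Theorem \ref{Theorem:essential_norm_non-typical}), obtaining the compactness criterion as a corollary; you prove only the two implications you need (norm-approximation by finite-rank operators for sufficiency, the pointwise-null criterion for necessity), which is shorter but does not recover the formula $\|W_{\psi,\varphi}\|_e=\lim_{N\to\infty}\sup_{\modu{\varphi(v)}\geq N}\frac{\mu(v)}{\mu(\varphi(v))}\modu{\psi(v)}$ that the paper also wants as a standalone result. For (b)ii the divergence is largest: the paper establishes that this limit equals $\limsup_{\modu{v}\to\infty}\frac{\mu(v)}{\mu(\varphi(v))}\modu{\psi(v)}$ via the auxiliary quantities $t_n$ and a case analysis --- the most delicate argument of the section --- whereas you bypass it entirely by combining (b)i with the boundedness theorem (typical weight plus infinite range gives $\psi\in\lilLmuinf$) and splitting an arbitrary sequence $\modu{v_n}\to\infty$ according to whether $\modu{\varphi(v_n)}$ stays bounded. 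That splitting argument is sound (local finiteness bounds $\mu(\varphi(v_n))$ away from zero on the bounded branch), and it suffices for the stated equivalence; the price is again that you do not obtain the limsup formula \eqref{Equation:essential_norm_limsup} for the essential norm. In short: your approach buys brevity and avoids the paper's hardest lemma, the paper's approach buys the quantitative essential norm identities.
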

	
	Our first result in this section shows that self-maps with finite range induce compact weighted composition operators on both $\Lmuinf$ and $\lilLmuinf$, independent of the multiplication symbol.  This result utilizes the sequence characterization of compactness contained in the next lemma.
	
	\begin{lemma}[{\cite[Lemma 2.5]{AllenCraig:2015}}]\label{Lemma:Compactness-criterion}
		Let $X,Y$ be two Banach spaces of functions on an unbounded, locally finite metric space $(T,\metric)$.  Suppose that
		\begin{enumerate}
			\item[(a)] the point evaluation functionals of $X$ are bounded,
			\item[(b)] the closed unit ball of $X$ is a compact subset of $X$ in the topology of uniform convergence on compact sets,
			\item[(c)] $A:X \to Y$ is bounded when $X$ and $Y$ are given the topology of uniform convergence on compact sets.
		\end{enumerate}  Then $A$ is a compact operator if and only if given a bounded sequence $(f_n)$ in $X$ such that $f_n \to 0$ pointwise, then the sequence $(A f_n)$ converges to zero in the norm of $Y$.
	\end{lemma}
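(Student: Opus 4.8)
The plan is to reduce everything to sequences by exploiting the discreteness of $T$. Since $(T,\metric)$ is locally finite, every bounded subset of $T$ is finite, so the only compact subsets of $T$ are finite sets; consequently the topology of uniform convergence on compact subsets of $T$ coincides, on both $X$ and $Y$, with the topology of pointwise convergence. Moreover $T$ is countable, being the increasing union of the finite balls $\{v\in T:\modu{v}<M\}$, so the pointwise topology on $\Co^{T}$ is metrizable; by hypothesis (a) its restriction to $X$ is coarser than the norm topology, so the closed unit ball $B_{X}$ of $X$ is a pointwise-closed subset of $\Co^{T}$, and by (b) it is pointwise compact, hence---being metrizable---\emph{sequentially} compact. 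These observations turn both implications into sequential arguments.

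For the ``if'' direction I would verify directly that $A$ is compact by showing $A(B_{X})$ is relatively compact in the norm of $Y$. Given any sequence $(f_{n})$ in $B_{X}$, sequential compactness of $B_{X}$ yields a subsequence $(f_{n_{k}})$ converging pointwise to some $f\in B_{X}$. Then $g_{k}:=f_{n_{k}}-f$ satisfies $\|g_{k}\|\le 2$ in $X$ and $g_{k}\to 0$ pointwise, so the hypothesis gives $\|A g_{k}\|\to 0$ in $Y$, i.e. $A f_{n_{k}}\to A f$ in the norm of $Y$. Thus every sequence in $A(B_{X})$ has a norm-convergent subsequence and $A$ is compact.

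For the ``only if'' direction, suppose $A$ is compact and $(f_{n})$ is bounded in $X$ with $f_{n}\to 0$ pointwise; I want $\|A f_{n}\|\to 0$ in $Y$. If not, after passing to a subsequence we may assume $\|A f_{n}\|\ge\varepsilon>0$ for all $n$. Since $(f_{n})$ is norm-bounded and $A$ is compact, a further subsequence $(A f_{n_{j}})$ converges in the norm of $Y$ to some $g$ with $\|g\|\ge\varepsilon$, so $g\neq 0$. On the other hand, hypothesis (c) guarantees that $f_{n_{j}}\to 0$ pointwise forces $A f_{n_{j}}\to 0$ pointwise. Comparing these two modes of convergence of $(A f_{n_{j}})$ gives $g=0$, a contradiction.

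The step I expect to be the main obstacle is precisely this last comparison: to conclude from $A f_{n_{j}}\to g$ in $\|\cdot\|_{Y}$ and $A f_{n_{j}}\to 0$ pointwise that $g=0$, one needs the norm of $Y$ to dominate pointwise evaluation, i.e. that $Y$ is itself a functional Banach space. This is automatic in every application in this paper, where $Y$ is $\Lmuinf$ or $\lilLmuinf$ and Lemma \ref{Lemma:point_evaluation_bound} applies; in the general statement it is either built into the meaning of ``Banach space of functions'' or extracted from (a) together with (c). Everything else---that compacta in $T$ are finite, that $B_{X}$ is sequentially compact, and the two subsequence extractions---is routine.
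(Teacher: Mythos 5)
The paper does not actually prove this lemma; it is quoted from \cite[Lemma 2.5]{AllenCraig:2015} and used as a black box, so there is no in-paper argument to compare against. Judged on its own, your proof is essentially correct and is the standard Tjani-style compactness criterion adapted to the discrete setting. The reduction is right: local finiteness forces compact subsets of $T$ to be finite, so uniform convergence on compacta is pointwise convergence, and countability of $T$ makes that topology metrizable, turning hypothesis (b) into sequential compactness of the unit ball; both directions then go through by the subsequence extractions you describe (note that your ``if'' direction never uses (c), which is fine). The one substantive issue is exactly the one you flag: to conclude $g=0$ from $Af_{n_j}\to g$ in norm and $Af_{n_j}\to 0$ pointwise, you need the point evaluations on $Y$ to be norm-bounded. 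This is not literally among (a)--(c), and it cannot be ``extracted from (a) together with (c)'' as you suggest, since (a) concerns $X$ and (c) only relates the two compact-open topologies, saying nothing about the norm of $Y$ versus evaluation on $Y$; rather, it is implicit in the phrase ``Banach space of functions'' and holds in every application here, where $Y$ is $\Lmuinf$ or $\lilLmuinf$ and Lemma \ref{Lemma:point_evaluation_bound} applies. One small slip worth noting: a norm-closed set need not be closed in a coarser topology, so (a) alone does not give that the unit ball is pointwise-closed; but this is harmless, since closedness and sequential compactness of the ball in the pointwise topology already follow from (b) together with metrizability.
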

	
	\begin{theorem}
		Suppose $\psi$ is a function on $T$ and $\varphi$ is a self-map of $T$ with finite range for which $W_{\psi,\varphi}$ is bounded on $\Lmuinf$ (respectively $\lilLmuinf$).  Then $W_{\psi,\varphi}$ is compact on $\Lmuinf$ (respectively $\lilLmuinf$).
	\end{theorem}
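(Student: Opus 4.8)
The plan is to invoke the sequence characterization of compactness in Lemma~\ref{Lemma:Compactness-criterion} with $X = Y = \Lmuinf$. First I would check its three hypotheses. Point evaluations on $\Lmuinf$ are bounded by Lemma~\ref{Lemma:point_evaluation_bound}. Since $T$ is locally finite, every compact subset of $T$ is finite, so the topology of uniform convergence on compact sets coincides with the topology of pointwise convergence; in this topology the closed unit ball of $\Lmuinf$ is precisely $\prod_{v\in T}\{z\in\Co : |z|\le 1/\mu(v)\}$, which is compact by Tychonoff's theorem, giving (b). For (c), for each fixed $v$ the value $(W_{\psi,\varphi}f)(v)=\psi(v)f(\varphi(v))$ depends continuously on $f$ in the pointwise topology, so $W_{\psi,\varphi}$ is continuous for that topology.

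Next I would run the sequence argument. Let $(f_n)$ be a bounded sequence in $\Lmuinf$ with $f_n \to 0$ pointwise; the goal is $\|W_{\psi,\varphi}f_n\|_\mu \to 0$. Enumerate the finite range $\varphi(T) = \{w_1,\dots,w_k\}$ and set $M = \max_{1\le j\le k}\mu(w_j) > 0$. The key observation is that boundedness of $W_{\psi,\varphi}$, i.e.\ finiteness of $\sigma_{\psi,\varphi}$ by Theorem~\ref{Theorem:bounded_criteria_Lmuinfty}, forces $\mu(v)\modu{\psi(v)} = \mu(\varphi(v))\cdot\frac{\mu(v)}{\mu(\varphi(v))}\modu{\psi(v)} \le M\,\sigma_{\psi,\varphi}$ for every $v\in T$, because $\varphi(v)$ ranges over only the finitely many points $w_j$. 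Hence
\[
\|W_{\psi,\varphi}f_n\|_\mu = \sup_{v\in T}\mu(v)\modu{\psi(v)}\,\modu{f_n(\varphi(v))} \le M\,\sigma_{\psi,\varphi}\max_{1\le j\le k}\modu{f_n(w_j)} \longrightarrow 0,
\]
since $f_n(w_j)\to 0$ for each of the finitely many $w_j$. By Lemma~\ref{Lemma:Compactness-criterion}, $W_{\psi,\varphi}$ is compact on $\Lmuinf$.

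For the $\lilLmuinf$ statement I would use a restriction argument rather than a direct application of Lemma~\ref{Lemma:Compactness-criterion}: the closed unit ball of $\lilLmuinf$ need not be compact in the pointwise topology, so hypothesis (b) is not available there. Instead, if $W_{\psi,\varphi}$ is bounded on $\lilLmuinf$, then by Lemma~\ref{Lemma:Bounded_inf_0} it is bounded on $\Lmuinf$, hence compact on $\Lmuinf$ by the argument above. Since $\lilLmuinf$ is a closed subspace of $\Lmuinf$ (by \cite{AllenColonnaMartinesPons}) and is invariant under $W_{\psi,\varphi}$ by hypothesis, any bounded sequence in $\lilLmuinf$ has image with a subsequence converging in $\Lmuinf$-norm, necessarily to an element of the closed subspace $\lilLmuinf$; thus $W_{\psi,\varphi}$ is compact on $\lilLmuinf$ as well.

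I expect the main obstacle to be precisely this last point --- recognizing that the sequence lemma cannot be applied on $\lilLmuinf$ verbatim and routing around it via restriction --- together with the (routine but worth stating) verification of hypotheses (b) and (c), both of which rest on the fact that compact subsets of a locally finite metric space are finite. The core norm estimate on $\Lmuinf$ is essentially immediate once one notes the finiteness of $\varphi(T)$.
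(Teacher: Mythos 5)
Your $\Lmuinf$ argument is correct and is essentially the paper's: both apply the sequential criterion of Lemma~\ref{Lemma:Compactness-criterion} and bound $\|W_{\psi,\varphi}f_n\|_\mu$ by a constant multiple of $\max_{w\in\varphi(T)}\modu{f_n(w)}$, using the finiteness of $\varphi(T)$ together with the finiteness of $\sigma_{\psi,\varphi}$ supplied by Theorem~\ref{Theorem:bounded_criteria_Lmuinfty}; your $M\sigma_{\psi,\varphi}$ is the paper's $m\sigma_{\psi,\varphi}$. Where you genuinely diverge is the little-space case. The paper simply declares the $\lilLmuinf$ proof ``identical,'' which implicitly invokes Lemma~\ref{Lemma:Compactness-criterion} with $X=\lilLmuinf$; as you observe, hypothesis (b) is then in doubt, since the closed unit ball of $\lilLmuinf$ is not closed under pointwise limits (the functions $\frac{1}{\mu(v)}\bigchi_{\{\modu{w}\le n\}}(v)$ lie in that ball but converge pointwise to $1/\mu\notin\lilLmuinf$), hence is not compact in that topology. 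Your detour --- boundedness on $\lilLmuinf$ implies boundedness, hence compactness, on $\Lmuinf$ via Lemma~\ref{Lemma:Bounded_inf_0} and the $\Lmuinf$ argument, after which the restriction of a compact operator to a closed invariant subspace carrying the same norm is compact --- is fully rigorous and in fact patches a point the paper leaves implicit. What the paper's shortcut buys is brevity; what yours buys is that the compactness criterion is only ever applied on a space where its hypotheses are actually verified.
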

	
	\begin{proof}
		We will prove compactness on $\Lmuinf$, as the proof for the $\lilLmuinf$ case is identical.  Since $W_{\psi,\varphi}$ is bounded, from Theorem \ref{Theorem:bounded_criteria_Lmuinfty} we have that $\sigma_{\psi,\varphi}$ is finite.  Let $(f_n)$ be a bounded sequence in $\Lmuinf$ converging to 0 pointwise and fix $\varepsilon > 0$.  Since $\varphi(T)$ is finite, there exists a positive constant $m$ such that $\sup_{w \in \varphi(T)}\;\mu(w)  \leq m$.  Also, the pointwise convergence of $(f_n)$ to 0 is uniform on $\varphi(T)$.  Thus, for sufficiently large $n$, we have $\sup_{w \in \varphi(T)}\;\modu{f_n(w)} < \frac{\varepsilon}{m \sigma_{\psi,\varphi}}$.  With these observations, we see for such $n$,
		\begin{align*}
			\|W_{\psi,\varphi} f_n\|_\mu &= \sup_{v \in T}\;\mu(v)\modu{\psi(v)f_n(\varphi(v))}\\
			&= \sup_{v \in T}\;\frac{\mu(v)}{\mu(\varphi(v))}\modu{\psi(v)} \mu(\varphi(v)) \modu{f_n(\varphi(v))}\\
			&\leq \sigma_{\psi,\varphi}\sup_{w \in \varphi(T)}\;\mu(w) \modu{f_n(w)}\\
			&\leq m\sigma_{\psi,\varphi} \sup_{w \in \varphi(T)}\;\modu{f_n(w)}\\
			&< \varepsilon.
		\end{align*}  So $\|W_{\psi,\varphi} f_n\|_\mu \to 0$ as $n \to \infty$.  Thus by Lemma \ref{Lemma:Compactness-criterion}, $W_{\psi,\varphi}$ is compact on $\Lmuinf$.
	\end{proof}
	
	In view of the previous theorem, we assume in the rest of this section that $\varphi$ has an infinite range and determine the compactness of the operator $W_{\psi,\varphi}$ by computing its essential norm. To this end, we employ the following sequence of compact operators.  First, for $f\in\Lmuinf$ and $n\in\N$, define a function $f_n\in\Lmuinf$ by \[f_n(v)= \begin{cases}f(v) & \text{if $\modu{v} \leq n$}\\0 & \text{if $\modu{v} > n$}.\end{cases}\]  Then define the operator $A_n$ by $A_n f=f_n$.  It is easy to see that these operators are linear. The following lemma captures the other most relevant properties.
	
	\begin{lemma}\label{Lemma:compactsequence}
		For each $n\in\N$, the operator $A_n$ is compact on $\Lmuinf$ (respectively $\lilLmuinf$) with $\|A_n\|\leq 1$ and $\|I-A_n\|\leq 1.$
	\end{lemma}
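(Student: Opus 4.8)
The key observation is that each $A_n$ has finite rank, so compactness is essentially free, and the two norm estimates reduce to the elementary fact that restricting the index set in a supremum can only decrease it.

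First I would record that, since $T$ is locally finite, the set $F_n = \{v \in T : \modu{v} \leq n\}$ is finite. For any function $f$ on $T$, the truncation $f_n = A_n f$ is supported on $F_n$; hence $A_n$ maps $\Lmuinf$ (and $\lilLmuinf$) into the finite-dimensional space of functions supported on $F_n$, and in particular $f_n \in \lilLmuinf$ automatically. Thus, once boundedness is granted, $A_n$ is a bounded finite-rank operator and therefore compact on either space.

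For the norm bounds I would argue directly. Given $f \in \Lmuinf$,
\[
\|A_n f\|_\mu = \sup_{v \in T} \mu(v)\modu{f_n(v)} = \sup_{v \in F_n} \mu(v)\modu{f(v)} \leq \|f\|_\mu,
\]
which simultaneously shows $A_n$ is bounded and $\|A_n\| \leq 1$. Likewise, $(I - A_n)f$ vanishes on $F_n$ and coincides with $f$ off $F_n$, so $\|(I - A_n)f\|_\mu = \sup_{\modu{v} > n} \mu(v)\modu{f(v)} \leq \|f\|_\mu$, giving $\|I - A_n\| \leq 1$; and when $f \in \lilLmuinf$, the quantity $\mu(v)\modu{(I-A_n)f(v)}$ equals $\mu(v)\modu{f(v)}$ for $\modu{v} > n$ and hence tends to $0$, so $I - A_n$ preserves $\lilLmuinf$ as well.

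There is no genuine obstacle here; the only point needing a moment's care is verifying that $A_n$ and $I - A_n$ actually map each of the two spaces into itself, and both facts are immediate, since truncation only shrinks the weighted supremum and a finitely supported function lies in $\lilLmuinf$ trivially.
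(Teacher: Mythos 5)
Your proof is correct, and it is exactly the routine verification the authors had in mind: the paper actually states this lemma without proof, so there is nothing to diverge from. Your three observations --- that local finiteness makes $F_n=\{v\in T:\modu{v}\leq n\}$ finite (hence $A_n$ is finite rank and maps into $\lilLmuinf$ automatically), that truncation only restricts the supremum defining $\|\cdot\|_\mu$ (giving both norm bounds), and that $I-A_n$ preserves $\lilLmuinf$ --- are precisely what is needed.
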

	
	\begin{theorem}\label{Theorem:essential_norm_non-typical}
		Suppose $\psi$ is a function on $T$ and $\varphi$ is a self-map of $T$ with infinite range for which $W_{\psi,\varphi}$ is bounded on $\Lmuinf$ (respectively $\lilLmuinf$).  Then \begin{equation}\label{Equation:essentialnorm}\|W_{\psi,\varphi}\|_e = \lim_{N \to \infty} \sup_{\modu{\varphi(v)}\geq N}\;\frac{\mu(v)}{\mu(\varphi(v))}\modu{\psi(v)}\end{equation} as an operator on $\Lmuinf$ (respectively $\lilLmuinf$).
	\end{theorem}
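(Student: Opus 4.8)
The plan is to establish the two inequalities $\|W_{\psi,\varphi}\|_e \le L$ and $\|W_{\psi,\varphi}\|_e \ge L$, where $L$ denotes the right-hand side of \eqref{Equation:essentialnorm}. First I would record that $L$ genuinely exists as a limit, since $N \mapsto \sup_{\modu{\varphi(v)}\ge N}\frac{\mu(v)}{\mu(\varphi(v))}\modu{\psi(v)}$ is nonincreasing and nonnegative, and that $L \le \sigma_{\psi,\varphi} < \infty$ by Theorem~\ref{Theorem:bounded_criteria_Lmuinfty} (using Lemma~\ref{Lemma:Bounded_inf_0} to pass from $\lilLmuinf$ to $\Lmuinf$ when that is the relevant domain). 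Since the test functions used for the lower bound will lie in $\lilLmuinf$, the $\Lmuinf$ and $\lilLmuinf$ cases can be treated in one stroke.

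For the upper bound I would use the compact operators $A_n$ of Lemma~\ref{Lemma:compactsequence}. Since $W_{\psi,\varphi}$ is bounded and $A_n$ is compact, $W_{\psi,\varphi}A_n$ is compact, so $\|W_{\psi,\varphi}\|_e \le \|W_{\psi,\varphi}(I-A_n)\|$. For $f$ in the closed unit ball, $(I-A_n)f$ vanishes on $\{v : \modu{v}\le n\}$, so only points $v$ with $\modu{\varphi(v)} > n$ contribute to $\|W_{\psi,\varphi}(I-A_n)f\|_\mu$; bounding $\mu(\varphi(v))\modu{f(\varphi(v))} \le \|f\|_\mu$ via Lemma~\ref{Lemma:point_evaluation_bound} gives
\[
\|W_{\psi,\varphi}(I-A_n)\| \;\le\; \sup_{\modu{\varphi(v)} > n}\;\frac{\mu(v)}{\mu(\varphi(v))}\modu{\psi(v)},
\]
and letting $n \to \infty$ yields $\|W_{\psi,\varphi}\|_e \le L$.

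For the lower bound, fix $\varepsilon > 0$. Because the supremum over $\{v : \modu{\varphi(v)}\ge N\}$ decreases to $L$, it exceeds $L-\varepsilon$ for every $N$, so I can select $w_k$ with $\modu{\varphi(w_k)}\ge k$ and $\frac{\mu(w_k)}{\mu(\varphi(w_k))}\modu{\psi(w_k)} > L - \varepsilon$; since $\modu{\varphi(w_k)}\to\infty$, local finiteness lets me pass to a subsequence for which the points $u_k := \varphi(w_k)$ are distinct. Put $h_k(v) = \frac{1}{\mu(v)}\bigchi_{u_k}(v)$; then $h_k \in \lilLmuinf$ with $\|h_k\|_\mu = 1$ by Lemma~\ref{Lemma:1/mu_in_lilLmuinf}, and since the supremum defining $\|W_{\psi,\varphi}h_k\|_\mu$ runs over $S_{w_k} = \varphi^{-1}(u_k) \ni w_k$, we obtain $\|W_{\psi,\varphi}h_k\|_\mu > L - \varepsilon$. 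The $h_k$ have pairwise disjoint supports and unit norm, so for any bounded linear functional $\phi$ on the space and any unimodular scalars $\lambda_1,\dots,\lambda_m$ we have $\bigl\|\sum_{k\le m}\lambda_k h_k\bigr\|_\mu \le 1$, hence $\sum_{k\le m}\modu{\phi(h_k)} \le \|\phi\|$; letting $m\to\infty$ forces $\phi(h_k)\to 0$, that is, $h_k \to 0$ weakly. Therefore $\|Kh_k\|_\mu \to 0$ for every compact operator $K$, and
\[
\|W_{\psi,\varphi}-K\| \;\ge\; \limsup_{k\to\infty}\bigl(\|W_{\psi,\varphi}h_k\|_\mu - \|Kh_k\|_\mu\bigr) \;\ge\; L-\varepsilon.
\]
Taking the infimum over compact $K$ and then $\varepsilon\to 0$ gives $\|W_{\psi,\varphi}\|_e \ge L$.

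The step I expect to be most delicate is the claim $\|Kh_k\|_\mu \to 0$ in the lower bound: mere pointwise convergence of $h_k$ to $0$ is awkward to exploit against an arbitrary compact operator, and the point is that the disjoint-support structure of the $h_k$ upgrades pointwise nullity to honest weak nullity, after which the complete continuity of compact operators finishes the estimate. Everything else — the monotonicity giving existence of $L$, the index bookkeeping in the $A_n$ estimate, and the subsequence extraction making the $u_k$ distinct — is routine.
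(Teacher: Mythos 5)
Your proposal is correct, and the upper bound is the same argument as the paper's: truncation operators $A_n$, the observation that $(I-A_n)f$ kills all points $v$ with $\modu{\varphi(v)}\le n$, and $\|I-A_n\|\le 1$; you have merely collapsed the paper's explicit split into the two suprema $R_N(n)$ and $S_N(n)$ (over $\modu{\varphi(v)}\ge N$ and $\modu{\varphi(v)}\le N$, with the latter vanishing for $n>N$) into a single estimate. The lower bound is where you genuinely diverge. The paper argues by contradiction: it assumes $\|W_{\psi,\varphi}\|_e < s < L$, takes the same test functions $f_n = \frac{1}{\mu}\bigchi_{\varphi(v_n)}$, notes they are bounded and pointwise null, and invokes the sequential compactness criterion (Lemma \ref{Lemma:Compactness-criterion}) to get $\|Kf_n\|_\mu \to 0$ for the competing compact operator $K$. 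You instead prove directly that your $h_k$ tend to zero \emph{weakly}, via the disjoint-support trick ($\|\sum_{k\le m}\lambda_k h_k\|_\mu \le 1$ forces $\sum_k\modu{\phi(h_k)}\le\|\phi\|$), and then use complete continuity of compact operators to get $\|Kh_k\|_\mu\to 0$. Both routes work, but yours is arguably the more self-contained at this step: applying Lemma \ref{Lemma:Compactness-criterion} to an \emph{arbitrary} compact $K$ implicitly requires checking that $K$ satisfies hypothesis (c) of that lemma (continuity for the topology of uniform convergence on compact sets), whereas weak-to-norm continuity of compact operators is unconditional. The small price you pay is the extra step of extracting a subsequence with distinct $u_k = \varphi(w_k)$, which you correctly justify by local finiteness.
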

	
	\begin{proof}
		We will compute the essential norm for $W_{\psi,\varphi}$ acting on $\Lmuinf$, as the proof for the $\lilLmuinf$ case is identical.  Observe $W_{\psi,\varphi} A_n$ is compact for all $n \in \N$ since $W_{\psi,\varphi}$ is bounded and $A_n$ is compact from Lemma \ref{Lemma:compactsequence}.  From the definition of the essential norm, we have \begin{equation}
			\label{Inequality:essentialnormupper}\|W_{\psi,\varphi}\|_e\leq \|W_{\psi,\varphi}-W_{\psi,\varphi} A_n\|=\sup_{\|f\|_\mu\leq1}\;\sup_{v\in T}\;\mu(v)\modu{(W_{\psi,\varphi}(I-A_n)f)(v)}\end{equation} for every $n\in\N$.  Now fix $N\in\N$. We define \begin{align*}R_N(n)&=\sup_{\|f\|_\mu\leq1}\;\sup_{\modu{\varphi(v)}\geq N}\;\mu(v)\modu{(W_{\psi,\varphi}(I-A_n)f)(v)}
		\end{align*}
		and
		\begin{align*}
			S_N(n)&=\sup_{\|f\|_\mu\leq1}\;\sup_{\modu{\varphi(v)}\leq N}\;\mu(v)\modu{(W_{\psi,\varphi}(I-A_n)f)(v)}.
		\end{align*}  Then, from \eqref{Inequality:essentialnormupper} we obtain \[\|W_{\psi,\varphi}\|_e\leq \max\{R_N(n),S_N(n)\}\] for each $n, N\in \N$.  We now consider the case $n >N$. From Lemma \ref{Lemma:compactsequence}, we obtain \begin{align*}
			R_N(n)&=\sup_{\|f\|_\mu\leq1}\;\sup_{\modu{\varphi(v)}\geq N}\;\frac{\mu(v)}{\mu(\varphi(v))}\mu(\varphi(v))\modu{(W_{\psi,\varphi}(I-A_n)f)(v)} \\
			&=\sup_{\|f\|_\mu\leq1}\;\sup_{\modu{\varphi(v)}\geq N}\;\frac{\mu(v)}{\mu(\varphi(v))}\mu(\varphi(v))\modu{\psi(v)}\modu{((I-A_n)f)(\varphi(v))}\\
			&\leq\sup_{\|f\|_\mu\leq1}\;\sup_{\modu{\varphi(v)}\geq N}\;\frac{\mu(v)}{\mu(\varphi(v))}\modu{\psi(v)}\sup_{w\in T}\;\mu(w)\modu{((I-A_n)f)(w)}\\
			&=\sup_{\modu{\varphi(v)}\geq N}\;\frac{\mu(v)}{\mu(\varphi(v))}\modu{\psi(v)}\sup_{\|f\|_\mu\leq1}\;\sup_{w\in T}\;\mu(w)\modu{((I-A_n)f)(w)}\\
			&=\sup_{\modu{\varphi(v)}\geq N}\;\frac{\mu(v)}{\mu(\varphi(v))}\modu{\psi(v)}\|I-A_n\|\\
			&\leq \sup_{\modu{\varphi(v)}\geq N}\;\frac{\mu(v)}{\mu(\varphi(v))}\modu{\psi(v)}.
		\end{align*} Next, observe that
		\begin{align*}
			S_N(n)&=\sup_{\|f\|_\mu\leq1}\;\sup_{\modu{\varphi(v)}\leq N}\;\frac{\mu(v)}{\mu(\varphi(v))}\mu(\varphi(v))\modu{(W_{\psi,\varphi}(I-A_n)f)(v)} \\
			&=\sup_{\|f\|_\mu\leq1}\;\sup_{\modu{\varphi(v)}\leq N}\;\frac{\mu(v)}{\mu(\varphi(v))}\mu(\varphi(v))\modu{\psi(v)}\modu{((I-A_n)f)(\varphi(v))} \\
			&\leq \sup_{\|f\|_\mu\leq1}\;\sup_{\modu{\varphi(v)}\leq N}\;\frac{\mu(v)}{\mu(\varphi(v))}\modu{\psi(v)}\sup_{\modu{w}\leq N}\;\mu(w)\modu{((I-A_n)f)(w)}.
		\end{align*}  If $\modu{w}\leq N$ and $n>N$, then $((I-A_n)f)(w)=0$ and we have $S_N(n)=0$.  Thus, for $n> N$, \[\|W_{\psi,\varphi}\|_e\leq \max\{R_N(n),S_N(n)\}\leq R_N(n)\leq\sup_{\modu{\varphi(v)}\geq N}\;\frac{\mu(v)}{\mu(\varphi(v))}\modu{\psi(v)}.\] This estimate holds for all $N\in \N$, and hence \[\|W_{\psi,\varphi}\|_e\leq \lim_{N\rightarrow\infty} \sup_{\modu{\varphi(v)}\geq N}\;\frac{\mu(v)}{\mu(\varphi(v))}\modu{\psi(v)}.\]
		
		Now assume the essential norm of $W_{\psi,\varphi}$ is strictly less than the limit in \eqref{Equation:essentialnorm}.  Then there is a compact operator $K$ and constant $s>0$ such that \[\|W_{\psi,\varphi}-K\|<s<\lim_{N\rightarrow\infty} \sup_{\modu{\varphi(v)}\geq N}\;\frac{\mu(v)}{\mu(\varphi(v))}\modu{\psi(v)}.\]  Moreover, we can find a sequence of points $(v_n)$ with $\modu{\varphi(v_n)}\rightarrow\infty$ such that \begin{equation}\label{equation:essentialnorm}\limsup_{n\rightarrow\infty}\frac{\mu(v_n)}{\mu(\varphi(v_n))}\modu{\psi(v_n)}>s.\end{equation}  Now, define the sequence of functions $(f_n)$ by \[f_n(v)=\frac{1}{\mu(v)}\bigchi_{\varphi(v_{n})}(v).\]  By \cite[Lemmas 2.4 and 2.5]{AllenPons:2016}, this is a bounded sequence of functions in $\Lmuinf$, with $\|f_n\|_\mu = 1$ for all $n \in \N$, converging to zero pointwise.  We also have the lower estimate, \[s>\|W_{\psi,\varphi}-K\|\geq \|(W_{\psi,\varphi}-K)f_n\|_\mu\geq \|W_{\psi,\varphi} f_n\|_\mu-\|Kf_n\|_\mu.\] By Lemma \ref{Lemma:Compactness-criterion}, $\|Kf_n\|_\mu\rightarrow 0$ as $n \to \infty$, and thus \begin{align*}s &\geq\limsup_{n\rightarrow \infty}\left(\|W_{\psi,\varphi} f_n\|_\mu-\|Kf_n\|_\mu\right)\\
			&=\limsup_{n\rightarrow \infty}\|W_{\psi,\varphi} f_n\|_\mu\\
			&\geq\limsup_{n\rightarrow \infty} \mu(v_n)\modu{\psi(v_n)f_n(\varphi(v_n))}\\
			&=\limsup_{n\rightarrow \infty} \frac{\mu(v_n)}{\mu(\varphi(v_n))}\modu{\psi(v_n)}\\
			&>s,
		\end{align*} which is a contradiction.  Therefore \[\|W_{\psi,\varphi}\|_e = \lim_{N \to \infty} \sup_{\modu{\varphi(v)}\geq N}\;\frac{\mu(v)}{\mu(\varphi(v))}\modu{\psi(v)},\] as desired
	\end{proof}
	
	\begin{corollary}\label{Corollary:Compactness}
		Suppose $\psi$ is a function on $T$ and $\varphi$ is a self-map of $T$ with infinite range for which $W_{\psi,\varphi}$ is bounded on $\Lmuinf$ (respectively $\lilLmuinf$).  Then $W_{\psi,\varphi}$ is compact on $\Lmuinf$ (respectively $\lilLmuinf$) if and only if \[\lim_{N \to \infty} \sup_{\modu{\varphi(v)}\geq N}\; \frac{\mu(v)}{\mu(\varphi(v))}\modu{\psi(v)} = 0.\]
	\end{corollary}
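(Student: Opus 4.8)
The plan is to read this statement off directly from the essential-norm computation in Theorem \ref{Theorem:essential_norm_non-typical}, using the standard characterization of compactness in terms of the essential norm. Recall that on any Banach space the collection of compact operators is a norm-closed subspace of the space of bounded operators; consequently the essential norm $\|A\|_e = \inf\{\|A-K\| : K \text{ compact}\}$ vanishes precisely when $A$ is itself compact. Indeed, if $\|A\|_e = 0$ then $A$ is a norm-limit of compact operators and hence compact, while if $A$ is compact then taking $K = A$ gives $\|A\|_e = 0$.

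So the steps are: first, note that since $W_{\psi,\varphi}$ is assumed bounded (on $\Lmuinf$, respectively $\lilLmuinf$) and $\varphi$ has infinite range, Theorem \ref{Theorem:essential_norm_non-typical} applies and gives
\[\|W_{\psi,\varphi}\|_e = \lim_{N\to\infty}\;\sup_{\modu{\varphi(v)}\geq N}\;\frac{\mu(v)}{\mu(\varphi(v))}\modu{\psi(v)}.\]
Second, observe that this limit is well defined: as $N$ increases the supremum is taken over a shrinking family of points $\{v : \modu{\varphi(v)} \geq N\}$, so the quantity $\sup_{\modu{\varphi(v)}\geq N}\frac{\mu(v)}{\mu(\varphi(v))}\modu{\psi(v)}$ is nonincreasing in $N$ and bounded below by $0$, hence convergent. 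Third, combine the essential-norm formula with the equivalence $\|A\|_e = 0 \iff A$ compact: $W_{\psi,\varphi}$ is compact on $\Lmuinf$ (respectively $\lilLmuinf$) if and only if the right-hand limit equals zero, which is exactly the asserted condition.

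There is essentially no obstacle here; this is a genuine corollary rather than a theorem requiring fresh work. The only point meriting explicit mention is the standard fact that vanishing essential norm is equivalent to compactness (which rests on the closedness of the ideal of compact operators), since all of the substantive estimates — the upper bound via the operators $A_n$ and the lower bound via the test functions $f_n(v) = \tfrac{1}{\mu(v)}\bigchi_{\varphi(v_n)}(v)$ — were already carried out in the proof of Theorem \ref{Theorem:essential_norm_non-typical}.
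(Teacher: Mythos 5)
Your proposal is correct and matches the paper's (implicit) argument exactly: the paper states this as an immediate consequence of Theorem \ref{Theorem:essential_norm_non-typical} together with the standard fact that an operator is compact if and only if its essential norm vanishes, which is precisely the route you take. Your added remarks on the monotonicity of the supremum in $N$ and the closedness of the ideal of compact operators are accurate and harmless elaborations.
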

	
	For the spaces constructed with typical weights we can reformulate the essential norm of $W_{\psi,\varphi}$ acting on $\lilLmuinf$ to be a limit superior, and furthermore the characterization of compactness as $\xi_{\psi,\varphi} = 0$.
	
	\begin{theorem}
		Let $\mu$ be a typical weight.  Suppose $\psi$ is a function on $T$ and $\varphi$ is a self-map of $T$ with infinite range for which $W_{\psi,\varphi}$ is bounded on $\lilLmuinf$.  Then \begin{equation}\label{Equation:essential_norm_limsup}\|W_{\psi,\varphi}\|_e = \limsup_{\modu{v} \to \infty} \frac{\mu(v)}{\mu(\varphi(v))}\modu{\psi(v)}.\end{equation}  Moreover, $W_{\psi,\varphi}$ is compact on $\lilLmuinf$ if and only if \[\lim_{\modu{v} \to \infty} \frac{\mu(v)}{\mu(\varphi(v))}\modu{\psi(v)} = 0.\]
	\end{theorem}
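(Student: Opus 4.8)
The plan is to derive both claims from Theorem~\ref{Theorem:essential_norm_non-typical}, which already gives
\[
\|W_{\psi,\varphi}\|_e = \lim_{N\to\infty}\;\sup_{\modu{\varphi(v)}\geq N}\;\frac{\mu(v)}{\mu(\varphi(v))}\modu{\psi(v)}
\]
as an operator on $\lilLmuinf$. Abbreviating $h(v) = \frac{\mu(v)}{\mu(\varphi(v))}\modu{\psi(v)}$, writing $A$ for the displayed limit and $B = \limsup_{\modu{v}\to\infty} h(v)$, the entire task reduces to showing $A = B$. Once this is established the ``moreover'' clause follows at once: $W_{\psi,\varphi}$ is compact iff $\|W_{\psi,\varphi}\|_e = 0$ iff $B = 0$, and since $h \geq 0$ this last condition is exactly $\lim_{\modu{v}\to\infty} h(v) = 0$; alternatively one may combine $A = B$ with Corollary~\ref{Corollary:Compactness}.

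For the inequality $A \leq B$, I would argue by contradiction. If $B < s < A$ for some $s$, then, since the suprema $\sup_{\modu{\varphi(v)}\geq N} h(v)$ decrease to $A$, each of them exceeds $s$. This forces $\{v \in T : h(v) > s\}$ to be infinite, for if it were finite then choosing $N$ larger than $\max\{\modu{\varphi(v)} : h(v) > s\}$ would make the corresponding supremum at most $s$. Because $T$ is locally finite, an infinite subset of $T$ contains a sequence $(v_n)$ of distinct points with $\modu{v_n}\to\infty$; along it $h(v_n) > s$, whence $B = \limsup_{\modu{v}\to\infty} h(v) \geq s$, contradicting $s > B$.

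The reverse inequality $B \leq A$ is where the typical-weight hypothesis enters. Since $W_{\psi,\varphi}$ is bounded on $\lilLmuinf$, $\varphi$ has infinite range, and $\mu$ is typical, the characterization of boundedness on $\lilLmuinf$ for infinite-range symbols under a typical weight gives $\psi \in \lilLmuinf$, that is, $\mu(v)\modu{\psi(v)}\to 0$ as $\modu{v}\to\infty$. Choose $(v_n)$ with $\modu{v_n}\to\infty$ and $h(v_n)\to B$, and assume $B > 0$ (the case $B = 0$ is trivial as $A \geq 0$). Then $\modu{\varphi(v_n)}\to\infty$: otherwise some subsequence satisfies $\modu{\varphi(v_{n_k})}\leq M$, and since $\mu$ attains a positive minimum $c$ on the finite set $\{w : \modu{w}\leq M\}$ we obtain $h(v_{n_k})\leq \frac{1}{c}\mu(v_{n_k})\modu{\psi(v_{n_k})}\to 0$, contradicting $h(v_{n_k})\to B > 0$. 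Now for each fixed $N$ we have $h(v_n)\leq \sup_{\modu{\varphi(v)}\geq N} h(v)$ for all large $n$, so $B\leq \sup_{\modu{\varphi(v)}\geq N} h(v)$, and letting $N\to\infty$ yields $B\leq A$.

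The only genuine obstacle is the bookkeeping involved in interchanging ``$\modu{v}$ large'' with ``$\modu{\varphi(v)}$ large'', since neither condition implies the other; the bridge is precisely the observation above that points with $\modu{v}$ large but $\modu{\varphi(v)}$ bounded make a negligible contribution to $h$, which is exactly what the membership $\psi \in \lilLmuinf$ provides. The remaining manipulations of suprema and limits superior are routine.
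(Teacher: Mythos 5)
Your proposal is correct and follows essentially the same route as the paper: both reduce the theorem, via Theorem \ref{Theorem:essential_norm_non-typical}, to showing that $\lim_{N\to\infty}\sup_{\modu{\varphi(v)}\geq N}\frac{\mu(v)}{\mu(\varphi(v))}\modu{\psi(v)}$ equals $\limsup_{\modu{v}\to\infty}\frac{\mu(v)}{\mu(\varphi(v))}\modu{\psi(v)}$, and both use $\psi\in\lilLmuinf$ (forced by boundedness under a typical weight) to dispose of sequences along which $\modu{v_n}\to\infty$ while $\modu{\varphi(v_n)}$ stays bounded. The only divergence is in the inequality asserting that the first limit is at most the limit superior: the paper constructs the auxiliary minima $t_n$, whereas you argue by contradiction that a superlevel set would have to be infinite and hence, by local finiteness, contain points of arbitrarily large length --- an equally valid and arguably cleaner argument.
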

	
	\begin{proof}
		To establish the essential norm, by Theorem \ref{Theorem:essential_norm_non-typical} it suffices to show the limit in \eqref{Equation:essentialnorm} is equal to the limit superior in \eqref{Equation:essential_norm_limsup}.  We will first show the limit to be less than or equal to the limit superior.
		
		For $n \in \N$, define \[t_n = \min\{m \in \N : \modu{\varphi(v)} \geq n \text{ for some } v\in T \text{ with } \modu{v} > m\}.\]  We claim $\modu{t_n} \to \infty$ as $n \to \infty$.  If this is not the case, then there exists $N \in \N$ and a sequence of points $(v_n)$ with $\modu{v_n} \leq N$ and $\modu{\varphi(v_n)} \to \infty$.  But this is impossible since $\{v \in T : \modu{v} \leq N\}$ is finite.  Then the set $\{v \in T : \modu{v} > t_n\}$ is precisely \[\{v \in T : \modu{\varphi(v)} \geq n \text{ and } \modu{v} > t_n\} \cup \{v \in T : \modu{\varphi(v)} < n \text{ and } \modu{v} > t_n\}.\]  This implies
		\begin{equation}\label{Inequality:lim+sup<limsup}\begin{aligned}\lim_{N \to \infty} \sup_{\modu{\varphi(v)} \geq N} \frac{\mu(v)}{\mu(\varphi(v))}\modu{\psi(v)} &\leq \lim_{n \to \infty} \sup_{\modu{v} > t_n}\;\frac{\mu(v)}{\mu(\varphi(v))}\modu{\psi(v)}\\ &= \limsup_{\modu{v} \to \infty} \frac{\mu(v)}{\mu(\varphi(v))}\modu{\psi(v)},\end{aligned}\end{equation} where the inequality is due to the fact that $t_n$ is defined as a minimum.
		
		Now we will show equality must hold.  There exists a sequence of vertices $(v_n)$ with $\modu{v_n} \to \infty$ and
		\begin{equation}\label{Inequality:sequencelimsup}\begin{aligned}\lim_{n \to \infty} \frac{\mu(v_n)}{\mu(\varphi(v_n))}\modu{\psi(v_n)} &= \lim_{N \to \infty} \sup_{\modu{v} \geq N} \;\frac{\mu(v)}{\mu(\varphi(v))}\modu{\psi(v)}\\ &= \limsup_{\modu{v} \to \infty} \frac{\mu(v)}{\mu(\varphi(v))}\modu{\psi(v)},\end{aligned}\end{equation} i.e. the limit superior is attained along this sequence.  If the sequence $(\varphi(v_n))$ is bounded, then $\psi \in \lilLmuinf$ (a consequence of the fact that $\mu$ is typical) and \eqref{Inequality:sequencelimsup} imply \[\limsup_{\modu{v} \to \infty} \frac{\mu(v)}{\mu(\varphi(v))}\modu{\psi(v)}=\lim_{n \to \infty} \frac{\mu(v_n)}{\mu(\varphi(v_n))}\modu{\psi(v_n)} = 0.\]  From \eqref{Inequality:lim+sup<limsup} we have \[\lim_{N\to\infty} \sup_{\modu{\varphi(v)}\geq N} \;\frac{\mu(v)}{\mu(\varphi(v))}\modu{\psi(v)} = 0\] as well.
		
		Finally, if $(\varphi(v_n))$ is not bounded, then there exists a subsequence $(v_{n_k})$ with $\modu{v_{n_k}} \to \infty$ and $\modu{\varphi(v_{n_k})} \to \infty$.  Then, by \eqref{Inequality:lim+sup<limsup}, we have \begin{align*}\lim_{k \to \infty} \frac{\mu(v_{n_k})}{\mu(\varphi(v_{n_k}))}\modu{\psi(v_{n_k})} &\leq \lim_{N \to \infty}\sup_{\modu{\varphi(v)} \geq N}\; \frac{\mu(v)}{\mu(\varphi(v))}\modu{\psi(v)}\\&\leq\limsup_{\modu{v} \to \infty} \frac{\mu(v)}{\mu(\varphi(v))}\modu{\psi(v)}\end{align*} since $\modu{\varphi(v_{n_k})} \to \infty.$ From \eqref{Inequality:sequencelimsup}, it follows that equality must hold in this case as well.  The compactness of $W_{\psi,\varphi}$ on $\lilLmuinf$ follows immediately.
	\end{proof}
	
	We complete this section with compactness characteristics for composition operators $C_\varphi$ and multiplication operators $M_\psi$ on $\lilLmuinf$, which were not studied in \cite{AllenPons:2016} or \cite{AllenCraig:2015}.
	
	\begin{corollary} Suppose $\psi$ is a function on $T$ and $\varphi$ a self-map of $T$ for which $C_\varphi$ and $M_\psi$ are bounded on $\lilLmuinf$.
		\begin{enumerate}
			\item[(a)] For the composition operator $C_\varphi$,
			\begin{enumerate}
				\item[i.] if $\varphi$ has finite range, then $C_\varphi$ is compact on $\lilLmuinf$.
				\item[ii.] if $\varphi$ has infinite range, then $C_\varphi$ is compact on $\lilLmuinf$ if and only if \[\lim_{N\to\infty} \sup_{\modu{\varphi(v)}\geq N}\; \frac{\mu(v)}{\mu(\varphi(v))} = 0.\]
				\item[iii.] if $\varphi$ has infinite range and $\mu$ is a typical weight, then $C_\varphi$ is compact on $\lilLmuinf$ if and only if \[\lim_{\modu{v}\to\infty} \frac{\mu(v)}{\mu(\varphi(v))} = 0.\]
			\end{enumerate}
			\item[(b)] The operator $M_\psi$ is compact on $\lilLmuinf$ if and only if \[\lim_{\modu{v}\to\infty} \modu{\psi(v)} = 0.\]
		\end{enumerate}
	\end{corollary}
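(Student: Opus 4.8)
The plan is to derive every assertion as a specialization of the weighted composition operator results established in this section: the statements about $C_\varphi$ arise by taking $\psi \equiv 1$, and the statement about $M_\psi$ arises by taking $\varphi$ to be the identity self-map $\varphi(v) = v$. In each instance one checks that the specialized pair of symbols satisfies the hypotheses of the relevant theorem and then simplifies the characterizing quantity $\frac{\mu(v)}{\mu(\varphi(v))}\modu{\psi(v)}$.

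For part (a), fix $\psi \equiv 1$ (a function on $T$), so that $W_{1,\varphi} = C_\varphi$, and observe that $\frac{\mu(v)}{\mu(\varphi(v))}\modu{\psi(v)} = \frac{\mu(v)}{\mu(\varphi(v))}$ and $\xi_{1,\varphi} = \lim_{\modu{v}\to\infty}\frac{\mu(v)}{\mu(\varphi(v))}$. Since $C_\varphi$ is assumed bounded on $\lilLmuinf$: statement (i) follows immediately from the theorem asserting that a self-map of finite range induces a compact weighted composition operator on $\lilLmuinf$; statement (ii) is Corollary \ref{Corollary:Compactness} for this choice of $\psi$; and statement (iii) is the typical-weight compactness theorem for this choice of $\psi$, the condition $\xi_{1,\varphi} = 0$ there being exactly $\lim_{\modu{v}\to\infty}\frac{\mu(v)}{\mu(\varphi(v))} = 0$.

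For part (b), take $\varphi(v) = v$, so that $W_{\psi,\varphi} = M_\psi$. Since $T$ is unbounded and locally finite, it is an infinite set and $\varphi(T) = T$, so the identity self-map has infinite range; hence the finite-range results are inapplicable and we are in the infinite-range setting. Substituting $\varphi(v) = v$ into Corollary \ref{Corollary:Compactness} shows that $M_\psi$, assumed bounded on $\lilLmuinf$, is compact if and only if $\lim_{N\to\infty}\sup_{\modu{v}\geq N}\modu{\psi(v)} = 0$, which is precisely $\lim_{\modu{v}\to\infty}\modu{\psi(v)} = 0$.

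No serious difficulty arises; the only points warranting care are confirming that the constant function $1$ and the identity map are admissible inputs to the earlier theorems (the former is a function on $T$, while the latter genuinely has infinite range because $T$ is unbounded) and rewriting the iterated supremum $\lim_{N\to\infty}\sup_{\modu{\varphi(v)}\geq N}(\,\cdot\,)$, in the case $\varphi = \mathrm{id}$, as the ordinary limit $\lim_{\modu{v}\to\infty}(\,\cdot\,)$.
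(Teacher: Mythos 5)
Your proposal is correct and matches the paper's intended derivation: the paper states this corollary without proof, precisely because it follows by specializing the preceding compactness results to $\psi \equiv 1$ (for $C_\varphi$) and $\varphi = \mathrm{id}$ (for $M_\psi$), exactly as you do. Your care in noting that the identity map has infinite range (so Corollary \ref{Corollary:Compactness} applies to $M_\psi$ without a typical-weight hypothesis) is the only nontrivial checkpoint, and you handle it correctly.
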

	
	\section{Boundedness From Below and Closed Range}\label{Section:BoundedBelow}
	Recall a bounded operator $A:X \to Y$ between Banach spaces is bounded below if there exists a positive constant $\delta$ such that $\|Ax\|_Y\geq \delta\|x\|_X$ for all $x \in X$.  As a consequence of the Open Mapping Theorem, a bounded operator $A$ is bounded below if and only if it is injective and has closed range \cite[Proposition VII.6.4]{Conway:85}.  Thus, we first characterize the injective weighted composition operators on $\Lmuinf$ to aid in the characterization of those operators that are bounded below.
	
	To identify the injective weighted composition operators, we define the set $Z = \psi^{-1}(0)$. Recall, for $w \in T$ and $\varphi$ a self-map of $T$, the set $S_w=\varphi^{-1}(\varphi(w)).$
	
	\begin{theorem}\label{lemma:injectiveoperators}
		Let $\psi$ be a function on $T$ and $\varphi$ a self-map of $T$.  Then $W_{\psi,\varphi}$, as an operator on $\Lmuinf$ (respectively $\lilLmuinf$), is injective if and only if $\varphi$ is surjective and for every $w\in T$, $S_w\cap Z^c \neq \emptyset$.
	\end{theorem}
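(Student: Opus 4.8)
The plan is to test injectivity against the simplest possible functions, the indicators $\bigchi_u$ of singletons, which by Lemma \ref{Lemma:1/mu_in_lilLmuinf} lie in $\lilLmuinf \subseteq \Lmuinf$ and are nonzero. Since everything is computed pointwise from the identity $(W_{\psi,\varphi}f)(v) = \psi(v)f(\varphi(v))$, the argument is identical on $\Lmuinf$ and on $\lilLmuinf$, so I would carry it out once. It is helpful to read the hypothesis ``$S_w \cap Z^c \neq \emptyset$ for every $w$'' as: on every nonempty fiber of $\varphi$, the weight $\psi$ is somewhere nonzero.

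For necessity, I would assume $W_{\psi,\varphi}$ is injective and argue by contradiction in two steps. If $\varphi$ is not surjective, pick $u \in T \setminus \varphi(T)$; then $\bigchi_u\circ\varphi \equiv 0$, so $W_{\psi,\varphi}\bigchi_u = 0$ while $\bigchi_u \neq 0$, contradicting injectivity; hence $\varphi$ is surjective. If instead there is $w \in T$ with $S_w \cap Z^c = \emptyset$, i.e. $\psi$ vanishes identically on the fiber $\varphi^{-1}(\varphi(w))$, set $u = \varphi(w)$; then $W_{\psi,\varphi}\bigchi_u$ is supported on $\varphi^{-1}(u) = S_w$, where $\psi$ vanishes, so $W_{\psi,\varphi}\bigchi_u = 0$ with $\bigchi_u \neq 0$, again a contradiction. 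Thus both conditions must hold.

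For sufficiency, I would assume $\varphi$ is surjective and $S_w \cap Z^c \neq \emptyset$ for every $w$, and suppose $W_{\psi,\varphi}f = 0$. Fix an arbitrary $u \in T$: by surjectivity choose $w$ with $\varphi(w) = u$, and by hypothesis choose $v \in S_w$ with $\psi(v) \neq 0$; since $\varphi(v) = \varphi(w) = u$, the relation $0 = (W_{\psi,\varphi}f)(v) = \psi(v)f(u)$ forces $f(u) = 0$. As $u$ was arbitrary, $f \equiv 0$, so $W_{\psi,\varphi}$ is injective.

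I do not expect a genuine obstacle. The only care needed is to organize the necessity direction so that the single failure hypothesis cleanly yields both halves of the conclusion, and to notice that the same test function $\bigchi_u$ serves in both subcases; with the fiber reading of the hypothesis, the equivalence follows directly once the pointwise formula for $W_{\psi,\varphi}$ is written down.
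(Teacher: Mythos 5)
Your proof is correct and follows essentially the same route as the paper: the necessity direction uses the identical test functions $\bigchi_u$ for $u\notin\varphi(T)$ and $u=\varphi(w)$, and your sufficiency argument is just the contrapositive form of the paper's (you show $W_{\psi,\varphi}f=0$ forces $f\equiv 0$, the paper shows $f\neq 0$ forces $W_{\psi,\varphi}f\neq 0$, via the same chain of points supplied by surjectivity and the fiber condition). No gaps.
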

	
	\begin{proof}
		First, suppose $\varphi$ is surjective and for every $w\in T$, $S_w\cap Z^c \neq \emptyset$.  Let $f$ be a function in $\lilLmuinf$ or $\Lmuinf$ that is not the zero function.  Then there is a point $w \in T$ such that $f(w) \neq 0$.  Since $\varphi$ is surjective, there is a $v \in T$ with $\varphi(v) = w$.  From the condition on $S_v$, there is a point $v' \in S_v$ such that $\psi(v') \neq 0$.  Thus
		\[(W_{\psi,\varphi}f)(v') = \psi(v')f(\varphi(v')) = \psi(v')f(\varphi(v)) = \psi(v')f(w) \neq 0.\]  Hence $W_{\psi,\varphi} f$ is not the zero function, and $W_{\psi,\varphi}$ is injective.
		
		For the converse, first suppose $\varphi$ is not surjective.  Then there exists $w \in T$ such that $w \not\in \varphi(T)$.  The function $\bigchi_w(v)$ is a non-zero element of $\lilLmuinf$ and $W_{\psi,\varphi}f = 0$.  Hence, $W_{\psi,\varphi}$ is not injective.
		
		Next, suppose there exists $w \in T$ such that $S_w \subseteq Z$. Then the function $\bigchi_{\varphi(w)}$ is a non-zero element of $\lilLmuinf$, but $W_{\psi,\varphi}\bigchi_{\varphi(w)} = 0$.  Thus, $W_{\psi,\varphi}$ is not injective. In either case, $W_{\psi,\varphi}$ is not injective, completing the proof.
	\end{proof}
	
	To characterize the weighted composition operators that are bounded below, we define the set $U_\varepsilon$ as \[U_\varepsilon=\left\{v\in T : \frac{\mu(v)}{\mu(\varphi(v))}\modu{\psi(v)} \geq \varepsilon\right\}\] for $\psi$ a function on $T$, $\varphi$ a self-map of $T$, and $\varepsilon>0$.
	
	\begin{theorem}\label{theorem:boundedbelowWCO}
		Let $\psi$ be a function on $T$ and $\varphi$ a self-map of $T$ for which $W_{\psi,\varphi}$ is bounded on $\Lmuinf$ (respectively $\lilLmuinf$).  Then $W_{\psi,\varphi}$ is bounded below if and only if $\varphi$ is surjective and there is an $\varepsilon>0$ such that $U_{\varepsilon}\cap S_w \neq \emptyset$ for every $w\in T$.
	\end{theorem}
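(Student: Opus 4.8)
The plan is to work from the standard reformulation that a bounded operator $A$ between Banach spaces is bounded below precisely when there is a constant $\delta>0$ with $\|Af\|\geq\delta\|f\|$ for every $f$; note that such an estimate forces $A$ to be injective, so in the forward direction we may freely invoke Theorem~\ref{lemma:injectiveoperators}. I prove the two implications separately, and in both directions the test functions $g(v)=\frac{1}{\mu(v)}\bigchi_{\varphi(w)}(v)$ from Lemma~\ref{Lemma:1/mu_in_lilLmuinf} (which lie in $\lilLmuinf\subseteq\Lmuinf$ with $\|g\|_\mu=1$) carry the argument, so the reasoning is identical for $\Lmuinf$ and $\lilLmuinf$.

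For the forward direction, suppose $W_{\psi,\varphi}$ is bounded below, say $\|W_{\psi,\varphi}f\|_\mu\geq\delta\|f\|_\mu$ for all $f$. Since bounded below implies injective, Theorem~\ref{lemma:injectiveoperators} gives that $\varphi$ is surjective. Now fix $w\in T$ and apply the estimate to $g(v)=\frac{1}{\mu(v)}\bigchi_{\varphi(w)}(v)$: a direct computation shows
\[\|W_{\psi,\varphi}g\|_\mu=\sup_{v\in T}\;\mu(v)\modu{\psi(v)}\frac{1}{\mu(\varphi(v))}\bigchi_{\varphi(w)}(\varphi(v))=\sup_{v\in S_w}\;\frac{\mu(v)}{\mu(\varphi(v))}\modu{\psi(v)},\]
so $\sup_{v\in S_w}\frac{\mu(v)}{\mu(\varphi(v))}\modu{\psi(v)}\geq\delta$. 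Consequently, with $\varepsilon=\delta/2$ there is at least one $v\in S_w$ with $\frac{\mu(v)}{\mu(\varphi(v))}\modu{\psi(v)}\geq\varepsilon$, i.e.\ $U_\varepsilon\cap S_w\neq\emptyset$; since $w$ was arbitrary and $\varepsilon$ does not depend on $w$, this is the desired condition.

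For the converse, assume $\varphi$ is surjective and fix $\varepsilon>0$ with $U_\varepsilon\cap S_w\neq\emptyset$ for every $w$. Let $f$ be an arbitrary element of the space and let $\eta>0$. Choose $u_0\in T$ with $\mu(u_0)\modu{f(u_0)}>\|f\|_\mu-\eta$, and by surjectivity pick $u\in T$ with $\varphi(u)=u_0$. Since $S_u=\varphi^{-1}(\varphi(u))=\varphi^{-1}(u_0)$ meets $U_\varepsilon$, there is $v\in S_u$ with $\frac{\mu(v)}{\mu(\varphi(v))}\modu{\psi(v)}\geq\varepsilon$ and $\varphi(v)=u_0$. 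Then
\[\|W_{\psi,\varphi}f\|_\mu\geq\mu(v)\modu{\psi(v)}\modu{f(\varphi(v))}=\frac{\mu(v)}{\mu(\varphi(v))}\modu{\psi(v)}\cdot\mu(u_0)\modu{f(u_0)}\geq\varepsilon\bigl(\|f\|_\mu-\eta\bigr).\]
Letting $\eta\to 0$ yields $\|W_{\psi,\varphi}f\|_\mu\geq\varepsilon\|f\|_\mu$ for all $f$, so $W_{\psi,\varphi}$ is bounded below.

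The only genuine subtleties are bookkeeping ones, and they are where I expect the minor obstacles to sit. First, the test-function computation produces only a supremum over $S_w$ bounded below by $\delta$, not an attained maximum, so one must pass to $\varepsilon=\delta/2$ (indeed any $\varepsilon<\delta$ works) in order to land an actual point of $S_w$ inside $U_\varepsilon$. Second, in the converse $\|f\|_\mu$ need not be attained, which is why the auxiliary parameter $\eta$ is introduced and then sent to zero. It is also worth observing that the condition obtained is consistent with---and strictly stronger than---the injectivity criterion of Theorem~\ref{lemma:injectiveoperators}, since $U_\varepsilon\subseteq Z^c$.
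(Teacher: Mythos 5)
Your proposal is correct and follows essentially the same route as the paper: the forward direction uses injectivity plus the test functions $g(v)=\frac{1}{\mu(v)}\bigchi_{\varphi(w)}(v)$ to force $\sup_{v\in S_w}\frac{\mu(v)}{\mu(\varphi(v))}\modu{\psi(v)}\geq\delta$ and then shrinks $\varepsilon$ below $\delta$, exactly as in the paper. The converse differs only in bookkeeping — you near-attain $\|f\|_\mu$ at a point and let $\eta\to 0$, where the paper takes a supremum over $w$ and uses surjectivity of $\varphi$ to identify it with $\|f\|_\mu$ — but this is the same underlying estimate.
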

	
	\begin{proof}
		First, suppose $W_{\psi,\varphi}$ is bounded below.  Then $W_{\psi,\varphi}$ is injective and hence $\varphi$ is surjective by Lemma \ref{lemma:injectiveoperators}.  Also, there is an $\varepsilon_1 >0$ such that $\|W_{\psi,\varphi} f\|_{\mu}\geq \varepsilon_1 \|f\|_{\mu}$ for all $f$ in $\Lmuinf$ or $\lilLmuinf$.  For $w\in T$, take $f(v)=\frac1{\mu(v)}\bigchi_{\varphi(w)}(v)$. Since $\|f\|_{\mu}=1,$ we have $\|W_{\psi,\varphi} f\|_{\mu}\geq \varepsilon_1$ or \[\sup_{v\in T}\;\frac{\mu(v)}{\mu(\varphi(v))}\modu{\psi(v)}\bigchi_{\varphi(w)}(\varphi(v))=\sup_{v\in S_w}\;\frac{\mu(v)}{\mu(\varphi(v))}\modu{\psi(v)}\geq \varepsilon_1.\] Fix $0<\varepsilon<\varepsilon_1$. It follows that for every $w\in T$, there must exist a $v\in S_w$ with \[\frac{\mu(v)}{\mu(\varphi(v))}\modu{\psi(v)}\geq \varepsilon\] and thus $U_\varepsilon\cap S_w\neq \emptyset$.
		
		For the converse, suppose there is an $\varepsilon>0$ such that $U_\varepsilon\cap S_w\neq\emptyset$ for every $w\in T$ and $\varphi$ is surjective. First, let $f \in \Lmuinf$, and observe \[\|W_{\psi,\varphi} f\|_{\mu}=\sup_{v\in T}\;\mu(v)\modu{\psi(v)f(\varphi(v))} =\sup _{v\in T}\;\frac{\mu(v)}{\mu(\varphi(v))}\modu{\psi(v)}\mu(\varphi(v))\modu{f(\varphi(v))}.\]  For $w\in T$, there exists $v\in U_\varepsilon\cap S_w$ and thus \[\varepsilon\mu(\varphi(w))\modu{f(\varphi(w))}= \varepsilon\mu(\varphi(v))\modu{f(\varphi(v))}\leq \frac{\mu(v)}{\mu(\varphi(v))}\modu{\psi(v)}\mu(\varphi(v))\modu{f(\varphi(v))}.\]
		This implies \[\sup_{w\in T}\;\varepsilon\mu(\varphi(w))\modu{f(\varphi(w))}\leq \sup_{v\in T}\;\frac{\mu(v)}{\mu(\varphi(v))}\modu{\psi(v)}\mu(\varphi(v))\modu{f(\varphi(v))}\] or \begin{equation}
			\label{inequality:sneaky}\varepsilon\sup_{w\in T}\;\mu(\varphi(w))\modu{f(\varphi(w))}\leq \|W_{\psi,\varphi} f\|_{\mu}.\end{equation}  Since $\varphi$ is surjective, the supremum on the left is $\|f\|_{\mu}$ and thus, we have $\|W_{\psi,\varphi} f\|_{\mu}\geq \varepsilon\|f\|_{\mu}$ as desired.
	\end{proof}
	
	Considering Theorems \ref{lemma:injectiveoperators} and \ref{theorem:boundedbelowWCO}, it seems natural to expect that $W_{\psi,\varphi}$ has closed range on $\Lmuinf$ or $\lilLmuinf$ if and only if there is an $\varepsilon>0$ such that $U_\varepsilon\cap S_w\neq \emptyset$ for every $w$ for which $\psi(w)\neq 0$.  To verify this claim we will exploit quotient spaces and the fact that an injective operator is bounded below if and only if it has closed range.  The following outlines the necessary details.
	
	Let $X$ be a Banach space and $A:X\to X$ a bounded linear operator.  Then consider the quotient space $X/\ker(A).$  For $x\in X$, \[[x]=x+\ker(A)=\{x+m:m\in\ker(A)\}=\{y\in X: Ax=Ay\}\] and \[\left\|[x]\right\|=\inf\{\|x+m\|:m\in\ker(A)\}.\]  One immediate consequence is that $\left\|[x]\right\|\leq \|x\|.$  Additionally, define an operator $\hat{A}:X/\ker(A)\to X$ by $\hat{A}[x]=Ax.$  This map is well-defined since any $y\in[x]$ satisfies $Ax=Ay$.  It is also easy to see that $\hat{A}$ is linear, injective, and bounded with $\|\hat{A}\|\leq \|A\|.$   Finally, $\textup{range}(A)=\textup{range}(\hat{A}).$  Thus, $A$ has closed range if and only if $\hat{A}$ has closed range. But, since $\hat{A}$ is injective, we know $A$ has closed range if and only if $\hat{A}$ is bounded below.  
	
	\begin{theorem}\label{theorem:WeightedClosedRange}
		Let $\psi$ be a function on $T$ and $\varphi$ a self-map of $T$ for which $W_{\psi,\varphi}$ is bounded on $\Lmuinf$ (respectively $\lilLmuinf$). Then $W_{\psi,\varphi}$ has closed range if and only if there is an $\varepsilon>0$ such that $U_{\varepsilon}\cap S_w\neq \emptyset$ for every $w\in Z^c$.
	\end{theorem}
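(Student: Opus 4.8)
The plan is to carry out the quotient-space reduction set up immediately before the statement. Write $W=W_{\psi,\varphi}$ and let $X$ stand for either $\Lmuinf$ or $\lilLmuinf$; since $\hat{W}:X/\ker(W)\to X$ is injective and has the same range as $W$, it suffices to prove that $\hat{W}$ is bounded below exactly when the condition of the theorem holds. (If $\psi\equiv0$ then $W=0$ has closed range while $Z^c=\emptyset$ makes the condition vacuous, so I may assume $Z^c\neq\emptyset$.)

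The key preliminary step is to identify $\ker(W)$ and compute the quotient norm. Writing $\varphi(Z^c)=\{\varphi(v):v\in Z^c\}$, the formula $(Wf)(v)=\psi(v)f(\varphi(v))$ shows that $f\in\ker(W)$ precisely when $f$ vanishes on $\varphi(Z^c)$. Hence for $f\in X$ the truncation $m=-f\cdot\bigchi_{T\setminus\varphi(Z^c)}$ lies in $\ker(W)$ — it belongs to $X$ because $\mu\modu{m}\leq\mu\modu{f}$ pointwise, which keeps it in $\Lmuinf$ and in $\lilLmuinf$ — and since any $m\in\ker(W)$ gives $\|f+m\|_\mu\geq\sup_{w\in\varphi(Z^c)}\mu(w)\modu{f(w)}$ while this particular $m$ attains that value, I obtain \[\|[f]\|_\mu=\sup_{w\in\varphi(Z^c)}\mu(w)\modu{f(w)}.\] I would also record that ``$U_\varepsilon\cap S_w\neq\emptyset$ for every $w\in Z^c$'' is equivalent to ``for every $w\in\varphi(Z^c)$ there is $v\in\varphi^{-1}(w)$ with $\frac{\mu(v)}{\mu(w)}\modu{\psi(v)}\geq\varepsilon$,'' since $S_w=\varphi^{-1}(\varphi(w))$ depends only on $\varphi(w)$ and $\{\varphi(w):w\in Z^c\}=\varphi(Z^c)$.

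With these in hand the two implications are short computations of exactly the flavor used for Theorem \ref{theorem:boundedbelowWCO}. For the forward direction, assuming $\hat{W}$ bounded below by $\delta>0$, I would fix $w\in\varphi(Z^c)$ and test against $f(v)=\frac1{\mu(v)}\bigchi_w(v)$, which lies in $\lilLmuinf$ with $\|f\|_\mu=1$ by Lemma \ref{Lemma:1/mu_in_lilLmuinf}; the norm formula gives $\|[f]\|_\mu=1$, so $\delta\leq\|Wf\|_\mu=\sup_{v\in\varphi^{-1}(w)}\frac{\mu(v)}{\mu(w)}\modu{\psi(v)}$, and taking $\varepsilon=\delta/2$, uniformly over $w$, yields the required $v\in\varphi^{-1}(w)\cap U_\varepsilon$. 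For the converse, given such an $\varepsilon>0$, for any $f\in X$ and any $w\in\varphi(Z^c)$ I pick $v\in\varphi^{-1}(w)$ with $\frac{\mu(v)}{\mu(w)}\modu{\psi(v)}\geq\varepsilon$; then \[\|Wf\|_\mu\geq\mu(v)\modu{\psi(v)}\modu{f(\varphi(v))}=\frac{\mu(v)}{\mu(w)}\modu{\psi(v)}\cdot\mu(w)\modu{f(w)}\geq\varepsilon\,\mu(w)\modu{f(w)},\] and taking the supremum over $w\in\varphi(Z^c)$ gives $\|Wf\|_\mu\geq\varepsilon\|[f]\|_\mu$, so $\hat{W}$ is bounded below.

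The hardest part will be the middle step: correctly pinning down $\ker(W)=\{f\in X:f|_{\varphi(Z^c)}\equiv0\}$ and checking that the truncation $m$ simultaneously stays in the space and realizes the infimum defining $\|[f]\|_\mu$. Once the formula $\|[f]\|_\mu=\sup_{w\in\varphi(Z^c)}\mu(w)\modu{f(w)}$ is in place, everything else is routine.
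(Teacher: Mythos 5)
Your proposal is correct and follows essentially the same route as the paper: the same quotient-space reduction, the same test functions $\frac{1}{\mu}\bigchi_{\varphi(w)}$, and the same key inequality $\varepsilon\sup_{u\in\varphi(Z^c)}\mu(u)\modu{f(u)}\leq\|W_{\psi,\varphi}f\|_\mu$. The only cosmetic difference is in the converse, where you invoke the already-stated equivalence (injective and bounded below on the quotient implies closed range) while the paper unwinds this into an explicit Cauchy-sequence argument using the representatives $g_n=f_n\bigchi_{\varphi(Z^c)}$; your up-front identification of $\ker(W_{\psi,\varphi})$ and of the quotient norm is a clean way to organize the same computations.
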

	
	\begin{proof}
		Suppose $W_{\psi,\varphi}$ has closed range. We will verify the conclusion for $\Lmuinf$, but the same argument suffices for $\lilLmuinf$. Then $\hat{W}_{\psi,\varphi}$ is bounded below  by the discussion above and hence there is an $\varepsilon_1>0$ with $\|W_{\psi,\varphi} f\|_\mu=\|\hat{W}_{\psi,\varphi}[f]\|_\mu\geq \varepsilon_1\|[f]\|_\mu$ for all $f\in\Lmuinf$. For $w\in Z^c$, take $f(v)=\frac1{\mu(v)}\bigchi_{\varphi(w)}(v)$. To estimate $\|[f]\|_\mu$, let $g\in\ker(W_{\psi,\varphi})$.  Then $g(\varphi(v))=0$ for all $v\in Z^c$, which is equivalent to $g(v)=0$ for all $v\in\varphi(Z^c)$.  It follows that \[\begin{aligned}\|f+g\|_\mu=\sup_{v\in T}\mu(v)\modu{f(v)+g(v)}&\geq \sup_{v\in \varphi(Z^c)}\mu(v)\modu{f(v)+g(v)}\\&=\sup_{v\in\varphi(Z^c)}\mu(v)\modu{f(v)}=1\end{aligned}\] and \[\|[f]\|_\mu=\inf\{\|f+g\|_\mu:g\in\ker(C_\varphi)\}\geq 1.\] But $\|[f]\|_\mu\leq\|f\|_\mu=1$ and thus $\|[f]\|_\mu=1.$ From this, for our chosen $f$, we have $\|W_{\psi,\varphi} f\|_\mu=\|\hat{W}_{\psi,\varphi} [f]\|_\mu\geq \varepsilon_1$ or \[\sup_{v\in T}\frac{\mu(v)}{\mu(\varphi(v))}\modu{\psi(v)}\bigchi_{\varphi(w)}(\varphi(v))=\sup_{v\in S_w}\frac{\mu(v)}{\mu(\varphi(v))}\modu{\psi(v)}\geq \varepsilon_1.\] For $0<\varepsilon<\varepsilon_1$, it follows that for every $w\in Z^c$, there must exist a $v\in S_w$ with \[\frac{\mu(v)}{\mu(\varphi(v))}\modu{\psi(v)}\geq \varepsilon\] and thus $U_\varepsilon\cap S_w\neq \emptyset$.
		
		For the converse, suppose there is an $\varepsilon>0$ such that $U_{\varepsilon}\cap S_w\neq \emptyset$ for every $w\in Z^c$. Similarly to the proof of the previous theorem, for $w\in Z^c$, there is a $v\in U_\varepsilon\cap S_w$. From this, for an arbitrary $f\in\Lmuinf$, it follows that \[\varepsilon\mu(\varphi(w))\modu{f(\varphi(w))}= \varepsilon\mu(\varphi(v))\modu{f(\varphi(v))}\leq \frac{\mu(v)}{\mu(\varphi(v))}\modu{\psi(v)}\mu(\varphi(v))\modu{f(\varphi(v))},\] which implies \[\sup_{w\in Z^c}\varepsilon\mu(\varphi(w))\modu{f(\varphi(w))}\leq \sup_{v\in T}\frac{\mu(v)}{\mu(\varphi(v))}\modu{\psi(v)}\mu(\varphi(v))\modu{f(\varphi(v))}\] or \begin{equation}
			\label{inequality:sneaky2}\varepsilon\sup_{w\in Z^c}\mu(\varphi(w))\modu{f(\varphi(w))}\leq \|W_{\psi,\varphi} f\|_{\mu}.\end{equation}  Now, to show the range of $W_{\psi,\varphi}$ is closed, it suffices to show every Cauchy sequence in $\textup{range}(W_{\psi,\varphi})$ has its limit in $\textup{range}(W_{\psi,\varphi})$. Suppose $(W_{\psi,\varphi} f_n)$ is such a Cauchy sequence. First define $g_n=f_n\bigchi_{\varphi(Z^c)}$. Observe that \[(W_{\psi,\varphi} g_n)(v)=\psi(v)f_n(\varphi(v))\bigchi_{\varphi(Z^c)}(\varphi(v))=\psi(v)f_n(\varphi(v))=(W_{\psi,\varphi} f_n)(v)\] for all $v\in T$ and hence $W_{\psi,\varphi} g_n=W_{\psi,\varphi} f_n$  for all $n$.   From (\ref{inequality:sneaky2}), we have \[\begin{aligned}\varepsilon\sup_{w\in Z^c}\mu(\varphi(w))\modu{g_n(\varphi(w))-g_m(\varphi(w))}&\leq \|W_{\psi,\varphi} g_n-W_{\psi,\varphi}g_m\|_{\mu}\\&=\|W_{\psi,\varphi} f_n- W_{\psi,\varphi}f_m\|_{\mu}.\end{aligned}\]  Also, \begin{align*}\sup_{w\in Z^c}\mu(\varphi(w))\modu{g_n(\varphi(w))-g_m(\varphi(w))}&=\sup_{v\in \varphi(Z^c)}\mu(v)\modu{g_n(v)-g_m(v)}\\
			&=\sup_{v\in T}\mu(v)\modu{g_n(v)-g_m(v)}\\
			&=\|g_n-g_m\|_{\mu}\end{align*} where the second equality is due to the definition of the sequence $(g_n)$.  Hence \[\varepsilon\|g_n-g_m\|_{\mu}\leq\|W_{\psi,\varphi} g_n-W_{\psi,\varphi} g_m\|_{\mu}=\|W_{\psi,\varphi} f_n-W_{\psi,\varphi} f_m\|_{\mu},\] which implies $(g_n)$ is a Cauchy sequence in $\Lmuinf$ (resp. $\lilLmuinf$).  Setting $g$ to be the norm limit of $(g_n)$, we have $W_{\psi,\varphi} g=W_{\psi,\varphi}(\lim g_n)=\lim W_{\psi,\varphi} g_n=\lim W_{\psi,\varphi}f_n$ and thus the range of $W_{\psi,\varphi}$ is closed.
	\end{proof}
	
	It is important to note here that many of the results for the operators under investigation have properties similar to those in the continuous setting.  However, in that setting, operators are typically injective by design and hence bounded below if and only if they have closed range.  That is not true in this setting and so the utilization of the technique above is not required in the continuous setting (specifically in the case of analytic function spaces). Therefore this setting highlights the difference between operators that are bounded below and those that have closed range more finely than the continuous setting.
	
	Theorem \ref{lemma:injectiveoperators}, with $\psi \equiv 1$ on $T$, yields a characterization of the injective composition operators on $\Lmuinf$ and $\lilLmuinf$.
	
	\begin{corollary}\label{Corollary:compostiionoperatorinjective}
		Let $\varphi$ be a self-map of $T$.  Then as an operator on $\Lmuinf$ (respectively $\lilLmuinf$), $C_\varphi$ is injective if and only if $\varphi$ is surjective.
	\end{corollary}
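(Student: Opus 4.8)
The plan is to deduce this corollary immediately from Theorem \ref{lemma:injectiveoperators} by specializing to the constant symbol $\psi \equiv 1$. First I would observe that when $\psi \equiv 1$, the zero set $Z = \psi^{-1}(0)$ is empty, so $Z^c = T$. Consequently, the condition appearing in Theorem \ref{lemma:injectiveoperators}, namely that $S_w \cap Z^c \neq \emptyset$ for every $w \in T$, becomes simply $S_w \cap T = S_w \neq \emptyset$ for every $w \in T$.

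Next I would note that this remaining condition is automatically satisfied: for any $w \in T$, the set $S_w = \varphi^{-1}(\varphi(w))$ contains $w$ itself, hence is never empty. Therefore the characterization in Theorem \ref{lemma:injectiveoperators} collapses to the single requirement that $\varphi$ be surjective, which gives exactly the statement of the corollary for both $\Lmuinf$ and $\lilLmuinf$.

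There is no real obstacle here; the corollary is a direct specialization, and the only thing worth spelling out is the two elementary observations above ($Z = \emptyset$ and $w \in S_w$) that make the second hypothesis vacuous. The proof is a single short paragraph.
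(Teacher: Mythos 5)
Your proposal is correct and matches the paper exactly: the authors state that the corollary follows from Theorem \ref{lemma:injectiveoperators} with $\psi \equiv 1$, which is precisely your specialization. Your two supporting observations ($Z = \emptyset$ and $w \in S_w$, making the second hypothesis vacuous) are the right details to spell out, and the paper leaves them implicit.
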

	
	Theorems \ref{theorem:boundedbelowWCO}  and  \ref{theorem:WeightedClosedRange} yield characterizations for composition operators that are bounded below or have closed range. In this case, we define the set $V_\varepsilon$ to be
	\[V_\varepsilon=\left\{v\in T : \frac{\mu(v)}{\mu(\varphi(v))} \geq \varepsilon\right\}\] for $\varphi$ a self-map of $T$, and $\varepsilon >0$.
	
	\begin{corollary}\label{Corollary:compositionoperatorboundedbelow}
		Let $\varphi$ be a self-map of $T$, and suppose $C_\varphi$ is bounded on $\Lmuinf$ (respectively $\lilLmuinf$).  Then $C_\varphi$ is bounded below if and only if $\varphi$ is surjective and there is an $\varepsilon>0$ such that $V_{\varepsilon}\cap S_w \neq \emptyset$ for every $w\in T$.
	\end{corollary}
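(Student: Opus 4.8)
The plan is to deduce this corollary directly from Theorem \ref{theorem:boundedbelowWCO} by specializing to $\psi \equiv 1$. First I would record the elementary translations that occur under this substitution: the operator $W_{1,\varphi}$ is exactly the composition operator $C_\varphi$; the quantity $\frac{\mu(v)}{\mu(\varphi(v))}\modu{\psi(v)}$ appearing throughout Section \ref{Section:BoundedBelow} collapses to $\frac{\mu(v)}{\mu(\varphi(v))}$, so that the set $U_\varepsilon$ of Theorem \ref{theorem:boundedbelowWCO} becomes precisely the set $V_\varepsilon$ defined above; and $\sigma_{1,\varphi} = \sup_{v \in T} \frac{\mu(v)}{\mu(\varphi(v))}$, so the standing hypothesis that $C_\varphi$ is bounded on $\Lmuinf$ (respectively $\lilLmuinf$) is, by Theorem \ref{Theorem:bounded_criteria_Lmuinfty}, exactly the finiteness of $\sigma_{1,\varphi}$ needed to invoke Theorem \ref{theorem:boundedbelowWCO}.

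With these identifications in place, the conclusion is immediate: Theorem \ref{theorem:boundedbelowWCO} states that the bounded operator $W_{1,\varphi} = C_\varphi$ is bounded below if and only if $\varphi$ is surjective and there is an $\varepsilon > 0$ with $U_\varepsilon \cap S_w = V_\varepsilon \cap S_w \neq \emptyset$ for every $w \in T$, which is precisely the asserted characterization. The same reasoning applies verbatim in the $\lilLmuinf$ setting, since Theorem \ref{theorem:boundedbelowWCO} is stated for both spaces at once. As a sanity check on the surjectivity clause, one can note that injectivity of $C_\varphi$ reduces to surjectivity of $\varphi$ via Corollary \ref{Corollary:compostiionoperatorinjective} (equivalently, Theorem \ref{lemma:injectiveoperators} with $Z = \psi^{-1}(0) = \emptyset$, so that $S_w \cap Z^c = S_w \ni w$ is automatically nonempty), consistent with the surjectivity requirement in the statement.

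There is essentially no obstacle here, since the substantive content — both directions of the equivalence, including the use of the test functions $f(v) = \frac{1}{\mu(v)}\bigchi_{\varphi(w)}(v)$ and the key estimate \eqref{inequality:sneaky} linking $\varepsilon \sup_{w \in T} \mu(\varphi(w))\modu{f(\varphi(w))}$ to $\|W_{\psi,\varphi} f\|_\mu$ — is already carried out in the proof of Theorem \ref{theorem:boundedbelowWCO}. The only point requiring verification is the coincidence $U_\varepsilon = V_\varepsilon$ when $\psi \equiv 1$, which is immediate from the definitions. Accordingly, the write-up would be a short paragraph stating the specialization $\psi \equiv 1$, observing $U_\varepsilon = V_\varepsilon$ and $W_{1,\varphi} = C_\varphi$, and citing Theorem \ref{theorem:boundedbelowWCO}.
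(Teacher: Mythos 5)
Your proposal is correct and matches the paper exactly: the corollary is obtained by specializing Theorem \ref{theorem:boundedbelowWCO} to $\psi \equiv 1$, under which $U_\varepsilon$ becomes $V_\varepsilon$ and $W_{1,\varphi} = C_\varphi$, with no further argument needed. The paper itself gives no separate proof, stating only that the theorem ``yields'' this characterization, so your write-up is, if anything, more explicit than the original.
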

	
	\begin{corollary}\label{Corollary:compositionoperatorclosedrange}
		Let $\varphi$ be a self-map of $T$ and assume $C_\varphi$ is bounded on $\Lmuinf$ (respectively $\lilLmuinf$).  Then $C_\varphi$ has closed range if and only if there is an $\varepsilon>0$ such that $V_{\varepsilon}\cap S_w\neq \emptyset$ for every $w\in T$.  
	\end{corollary}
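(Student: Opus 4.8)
The plan is to obtain this as the $\psi \equiv 1$ specialization of Theorem~\ref{theorem:WeightedClosedRange}, using the fact that $C_\varphi = W_{1,\varphi}$. Before invoking that theorem I would record two elementary bookkeeping observations. First, with $\psi \equiv 1$ the zero set $Z = \psi^{-1}(0)$ is empty, so $Z^c = T$. Second, the inequality defining $U_\varepsilon$, namely $\frac{\mu(v)}{\mu(\varphi(v))}\modu{\psi(v)} \geq \varepsilon$, collapses to $\frac{\mu(v)}{\mu(\varphi(v))} \geq \varepsilon$, which is exactly the inequality defining $V_\varepsilon$; hence $U_\varepsilon = V_\varepsilon$ for every $\varepsilon > 0$.

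With these identifications in hand the argument is immediate. The standing hypothesis that $C_\varphi$ is bounded on $\Lmuinf$ (respectively $\lilLmuinf$) is precisely what is needed to apply Theorem~\ref{theorem:WeightedClosedRange} to $W_{1,\varphi}$, and that theorem then asserts that $C_\varphi$ has closed range if and only if there is an $\varepsilon > 0$ such that $U_\varepsilon \cap S_w \neq \emptyset$ for every $w \in Z^c$. Rewriting $U_\varepsilon$ as $V_\varepsilon$ and $Z^c$ as $T$ gives exactly the claimed equivalence.

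I do not anticipate any genuine obstacle here, since the corollary is a direct substitution into a theorem already established in this section; the only point requiring a moment's care is to confirm that taking $\psi \equiv 1$ indeed makes $Z^c$ the full space $T$, so that the closed-range criterion is demanded at \emph{every} vertex $w$ rather than only on a proper subset. Once that is noted, the proof is a single sentence.
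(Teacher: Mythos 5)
Your proposal is correct and matches the paper's approach exactly: the corollary is stated as an immediate consequence of Theorem~\ref{theorem:WeightedClosedRange} with $\psi \equiv 1$, under which $Z^c = T$ and $U_\varepsilon = V_\varepsilon$, precisely the two observations you record. No gap.
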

	
	To characterize the injective multiplication operators on $\Lmuinf$ or $\lilLmuinf$, we can apply Theorem \ref{lemma:injectiveoperators} to the weighted composition operator $W_{\psi,\varphi}$ where $\varphi$ is the identity map on $T$.  In this case, for a point $w \in T$, the set $S_w = \{w\}$.
	
	\begin{corollary}
		Let $\psi$ be a function on $T$.  Then as an operator on $\Lmuinf$ (respectively $\lilLmuinf$), $M_\psi$ is injective if and only if $\psi(v) \neq 0$ for all $v \in T$.
	\end{corollary}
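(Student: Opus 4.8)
The plan is to read this off from Theorem~\ref{lemma:injectiveoperators} by taking $\varphi$ to be the identity map $\mathrm{id}_T$ of $T$, since then $W_{\psi,\mathrm{id}_T}f = \psi\,(f\circ\mathrm{id}_T) = \psi f = M_\psi f$, so that $M_\psi = W_{\psi,\mathrm{id}_T}$ as an operator on both $\Lmuinf$ and $\lilLmuinf$. The identity map is a bijection of $T$, hence surjective, so the first hypothesis of Theorem~\ref{lemma:injectiveoperators} is automatically satisfied and places no restriction on $\psi$.

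It then remains to unwind the second condition. Because $\mathrm{id}_T$ is injective, the fiber $S_w = \mathrm{id}_T^{-1}(\mathrm{id}_T(w)) = \{w\}$ for every $w\in T$, as noted in the paragraph preceding the statement. Writing $Z = \psi^{-1}(0)$, the requirement that $S_w\cap Z^c\neq\emptyset$ for every $w\in T$ becomes $\{w\}\cap Z^c\neq\emptyset$ for every $w\in T$, i.e.\ $w\in Z^c$ for all $w$, i.e.\ $Z=\emptyset$; by the definition of $Z$ this is exactly the assertion that $\psi(v)\neq 0$ for every $v\in T$. Feeding these two observations into Theorem~\ref{lemma:injectiveoperators} yields that $M_\psi$ is injective on $\Lmuinf$ (respectively $\lilLmuinf$) if and only if $\psi$ is nowhere zero on $T$, which is the claim.

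Since the argument is a direct specialization, I do not expect any genuine obstacle; the only point to confirm is the legitimacy of the reduction $M_\psi = W_{\psi,\mathrm{id}_T}$, which is immediate from the definition of the weighted composition operator and the fact that $\mathrm{id}_T$ is a self-map of $T$. If a self-contained argument is preferred, the same conclusion follows directly: if $\psi(v_0)=0$ for some $v_0$, then $\bigchi_{v_0}$ is a nonzero element of $\lilLmuinf$ with $M_\psi\bigchi_{v_0} = 0$, so $M_\psi$ is not injective; conversely, if $\psi$ never vanishes and $M_\psi f = 0$, then $\psi(v)f(v)=0$ forces $f(v)=0$ for every $v\in T$, whence $f\equiv 0$.
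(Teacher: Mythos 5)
Your proposal is correct and follows exactly the route the paper takes: specializing Theorem~\ref{lemma:injectiveoperators} to $\varphi=\mathrm{id}_T$, noting surjectivity is automatic and $S_w=\{w\}$, so the fiber condition reduces to $\psi(w)\neq 0$ for all $w$. The self-contained argument you append is a correct bonus but not needed.
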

	
	For multiplication operators acting on $\Lmuinf$, a characterization of those that are bounded below was given in \cite[Corollary 3.5]{AllenCraig:2015} using spectral information. Theorem \ref{theorem:boundedbelowWCO} provides a direct proof and extends the result to $\lilLmuinf$.
	
	\begin{corollary}\label{Corollary:multiplicationboundedbelow}
		Let $\psi$ be a function on $T$ and assume $M_\psi$ is bounded on $\Lmuinf$ (respectively $\lilLmuinf$).  Then $M_\psi$ is bounded below if and only if $\inf_{v\in T}\modu{\psi(v)}>0$.
	\end{corollary}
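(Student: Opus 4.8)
The plan is to obtain this as the degenerate case $\varphi = \mathrm{id}_T$ of Theorem \ref{theorem:boundedbelowWCO}. Recall from the introduction that $M_\psi = W_{\psi,\mathrm{id}_T}$, so the standing hypothesis that $M_\psi$ is bounded on $\Lmuinf$ (respectively $\lilLmuinf$) is precisely what is needed to invoke that theorem with $\varphi$ equal to the identity map on $T$.

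First I would record the simplifications that occur when $\varphi = \mathrm{id}_T$. The identity is trivially surjective, so the surjectivity hypothesis of Theorem \ref{theorem:boundedbelowWCO} holds automatically. For each $w \in T$ we have $S_w = \varphi^{-1}(\varphi(w)) = \{w\}$, and since $\mu(\varphi(v)) = \mu(v)$ for all $v \in T$, the defining quantity of $U_\varepsilon$ collapses:
\[ \frac{\mu(v)}{\mu(\varphi(v))}\modu{\psi(v)} = \modu{\psi(v)}, \]
so $U_\varepsilon = \{v \in T : \modu{\psi(v)} \geq \varepsilon\}$.

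Next I would translate the conclusion of Theorem \ref{theorem:boundedbelowWCO}. Because $S_w = \{w\}$, the condition "$U_\varepsilon \cap S_w \neq \emptyset$ for every $w \in T$" says exactly that $w \in U_\varepsilon$ for every $w$, i.e. $\modu{\psi(w)} \geq \varepsilon$ for all $w \in T$; the existence of some $\varepsilon > 0$ for which this holds is literally the statement $\inf_{v \in T}\modu{\psi(v)} > 0$. Combining this with the automatic surjectivity, Theorem \ref{theorem:boundedbelowWCO} yields that $M_\psi$ is bounded below if and only if $\inf_{v\in T}\modu{\psi(v)} > 0$.

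There is essentially no obstacle; the only point requiring a moment's care is confirming that nothing in the proof of Theorem \ref{theorem:boundedbelowWCO} tacitly assumed $\varphi$ has infinite range or is nontrivial, which inspection confirms. If one preferred a self-contained argument, it is short: when $\inf_{v}\modu{\psi(v)} = \delta > 0$, then $\|M_\psi f\|_\mu = \sup_{v}\mu(v)\modu{\psi(v)}\modu{f(v)} \geq \delta\|f\|_\mu$; conversely, testing on $f(v) = \tfrac{1}{\mu(v)}\bigchi_w(v)$, which lies in $\lilLmuinf$ with unit norm by Lemma \ref{Lemma:1/mu_in_lilLmuinf}, gives $\modu{\psi(w)} = \|M_\psi f\|_\mu \geq \delta$ for every $w \in T$. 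Routing through Theorem \ref{theorem:boundedbelowWCO} is cleaner, however, and is the whole point of the remark preceding the corollary.
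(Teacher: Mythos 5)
Your proposal is correct and matches the paper's intended argument exactly: the corollary is obtained by specializing Theorem \ref{theorem:boundedbelowWCO} to $\varphi = \mathrm{id}_T$, where $S_w = \{w\}$ and $U_\varepsilon$ reduces to $\{v : \modu{\psi(v)} \geq \varepsilon\}$, so the bounded-below condition collapses to $\inf_{v\in T}\modu{\psi(v)} > 0$. The optional self-contained argument you sketch is also fine, but the specialization is the route the paper takes.
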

	
	For a multiplication operator to have closed range, 0 can be in the image of $\psi$ but cannot be a limit point; this provides the relevant contrast to Corollary \ref{Corollary:multiplicationboundedbelow}. The result follows immediately from Theorem \ref{theorem:WeightedClosedRange}.
	
	\begin{corollary}\label{Corollary:multiplicationclosedrange}
		Let $\psi$ be a function on $T$ and suppose $M_\psi$ is bounded on $\Lmuinf$ (respectively $\lilLmuinf$).  Then $M_\psi$ has closed range if and only if $\inf_{v\in Z^c}\modu{\psi(v)}>0$.
	\end{corollary}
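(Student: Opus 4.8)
The plan is to obtain this as a direct specialization of Theorem \ref{theorem:WeightedClosedRange} by taking $\varphi$ to be the identity map on $T$, since in that case $W_{\psi,\varphi} = M_\psi$ and the boundedness hypothesis is exactly the assumed boundedness of $M_\psi$. First I would record what the relevant data become when $\varphi(v) = v$ for every $v \in T$: the fiber $S_w = \varphi^{-1}(\varphi(w))$ reduces to the singleton $\{w\}$, and the ratio $\mu(v)/\mu(\varphi(v))$ is identically $1$, so the set $U_\varepsilon$ defined just before Theorem \ref{theorem:boundedbelowWCO} collapses to
\[U_\varepsilon = \left\{v \in T : \modu{\psi(v)} \geq \varepsilon\right\}.\]
The set $Z = \psi^{-1}(0)$ is of course unchanged.

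Next I would translate the closed range criterion. By Theorem \ref{theorem:WeightedClosedRange}, $M_\psi$ has closed range on $\Lmuinf$ (respectively $\lilLmuinf$) if and only if there is an $\varepsilon > 0$ such that $U_\varepsilon \cap S_w \neq \emptyset$ for every $w \in Z^c$. Since $S_w = \{w\}$, the intersection $U_\varepsilon \cap S_w$ is nonempty precisely when $w \in U_\varepsilon$, that is, when $\modu{\psi(w)} \geq \varepsilon$. Hence the criterion says exactly that there exists $\varepsilon > 0$ with $\modu{\psi(w)} \geq \varepsilon$ for all $w \in Z^c$, which is the same as $\inf_{v \in Z^c} \modu{\psi(v)} > 0$. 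This establishes both directions simultaneously.

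I do not anticipate a genuine obstacle here: the content of the corollary is entirely carried by the observation that the identity symbol makes every fiber $S_w$ a singleton, after which Theorem \ref{theorem:WeightedClosedRange} applies verbatim. The only point worth stating explicitly in the write-up is that $M_\psi = W_{\psi,\varphi}$ with $\varphi = \mathrm{id}$, so that the standing hypothesis of Theorem \ref{theorem:WeightedClosedRange} is satisfied; everything else is the short set-theoretic reduction above.
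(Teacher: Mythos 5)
Your proposal is correct and matches the paper's own derivation: the paper states that the corollary "follows immediately from Theorem \ref{theorem:WeightedClosedRange}," and your specialization to $\varphi = \mathrm{id}$ (so that $S_w = \{w\}$, $\mu(v)/\mu(\varphi(v)) \equiv 1$, and $U_\varepsilon = \{v : \modu{\psi(v)} \geq \varepsilon\}$) is exactly that reduction, carried out with the details made explicit.
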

	
	\section{Invertible and Isometric Weighted Composition Operators}\label{Section:Invertible}
	
	In the next two sections we explore ideas related to those in Section \ref{Section:BoundedBelow} and we restrict our attention to $W_{\psi,\varphi}$ on $\Lmuinf$; some results carry over to $W_{\psi,\varphi}$ on $\lilLmuinf$ with the same proof while other results require more analysis.  We begin with invertibility of weighted composition operators.  Bourdon \cite{Bourdon:2014} noted that when defined, $W_{1/\psi\circ\varphi^{-1},\varphi^{-1}}$ is the inverse of $W_{\psi,\varphi}$.
	
	\begin{theorem}\label{Theorem:InvertibilityTheorem}
		Let $\psi$ be a function on $T$ and $\varphi$ a self-map of $T$ for which $W_{\psi,\varphi}$ is bounded on $\Lmuinf$.  Then $W_{\psi,\varphi}$ has a bounded inverse if and only if $\varphi$ is bijective and \[\inf_{v\in T} \frac{\mu(v)}{\mu(\varphi(v))}\modu{\psi(v)}>0.\] In this case, we have $W_{\psi,\varphi}^{-1} = W_{1/\psi\circ\varphi^{-1},\varphi^{-1}}$ and  \[\|W_{\psi,\varphi}^{-1}\|=\sup_{v\in T}\frac{\mu(v)}{\mu(\varphi^{-1}(v))}\modu{\frac1{\psi(\varphi^{-1}(v))}}=\sup_{v\in T}\frac{\mu(\varphi(v))}{\mu(v)}\modu{\frac1{\psi(v)}}.\]
	\end{theorem}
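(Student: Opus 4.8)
The plan is to prove the two implications separately and then read off the norm formula, relying on the boundedness characterization (Theorem~\ref{Theorem:bounded_criteria_Lmuinfty}) and the bounded-below characterization (Theorem~\ref{theorem:boundedbelowWCO}), together with a short direct argument extracting information about $\varphi$ and $\psi$ from surjectivity of the operator.

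\emph{Necessity.} Suppose $W_{\psi,\varphi}$ has a bounded inverse. Then it is bounded below (one may take $\delta=\|W_{\psi,\varphi}^{-1}\|^{-1}$, since $\|f\|_\mu=\|W_{\psi,\varphi}^{-1}W_{\psi,\varphi}f\|_\mu\le\|W_{\psi,\varphi}^{-1}\|\,\|W_{\psi,\varphi}f\|_\mu$) and it is surjective. From bounded below and Theorem~\ref{theorem:boundedbelowWCO}, $\varphi$ is surjective and there is $\varepsilon>0$ with $U_\varepsilon\cap S_w\neq\emptyset$ for every $w\in T$. Next I would extract injectivity of $\varphi$ from surjectivity of $W_{\psi,\varphi}$: for any $w\in T$ write $\bigchi_w=W_{\psi,\varphi}f$ for some $f\in\Lmuinf$; evaluating at $w$ gives $\psi(w)f(\varphi(w))=1$, so $\psi$ is nowhere zero. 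Consequently, if $\varphi(v_1)=\varphi(v_2)$ then every $g$ in the range satisfies $g(v_1)/\psi(v_1)=f(\varphi(v_1))=f(\varphi(v_2))=g(v_2)/\psi(v_2)$, which fails for $g=\bigchi_{v_1}$ unless $v_1=v_2$. Hence $\varphi$ is a bijection of $T$, so $S_w=\varphi^{-1}(\varphi(w))=\{w\}$ for every $w$, and the condition $U_\varepsilon\cap S_w\neq\emptyset$ reduces to $\tfrac{\mu(w)}{\mu(\varphi(w))}\modu{\psi(w)}\ge\varepsilon$; taking the infimum over $w$ yields $\inf_{v\in T}\tfrac{\mu(v)}{\mu(\varphi(v))}\modu{\psi(v)}\ge\varepsilon>0$.

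\emph{Sufficiency and the norm.} Conversely, assume $\varphi$ is bijective and $c:=\inf_{v\in T}\tfrac{\mu(v)}{\mu(\varphi(v))}\modu{\psi(v)}>0$; in particular $\psi$ vanishes nowhere, so $W_{1/\psi\circ\varphi^{-1},\varphi^{-1}}$ is well defined. A direct computation shows it is a two-sided algebraic inverse of $W_{\psi,\varphi}$: for $v\in T$,
\[
\bigl(W_{1/\psi\circ\varphi^{-1},\varphi^{-1}}W_{\psi,\varphi}f\bigr)(v)=\frac{1}{\psi(\varphi^{-1}(v))}\,\psi(\varphi^{-1}(v))\,f\bigl(\varphi(\varphi^{-1}(v))\bigr)=f(v),
\]
and similarly in the other order. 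To see this inverse is bounded, apply Theorem~\ref{Theorem:bounded_criteria_Lmuinfty}: performing the change of variables $v=\varphi(u)$ (valid since $\varphi$ is onto $T$) in $\sigma_{1/\psi\circ\varphi^{-1},\varphi^{-1}}$ turns it into $\sup_{u\in T}\tfrac{\mu(\varphi(u))}{\mu(u)}\tfrac{1}{\modu{\psi(u)}}=1/c<\infty$. Therefore $W_{\psi,\varphi}$ has bounded inverse $W_{1/\psi\circ\varphi^{-1},\varphi^{-1}}$, and by Theorem~\ref{Theorem:bounded_criteria_Lmuinfty} its norm equals $\sigma_{1/\psi\circ\varphi^{-1},\varphi^{-1}}$, which is precisely the first displayed expression in the statement; the second follows from the first by the same substitution $v=\varphi(u)$.

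\emph{Anticipated obstacle.} The only step requiring genuine care is the deduction, purely from surjectivity of $W_{\psi,\varphi}$, that $\psi$ is nonvanishing and $\varphi$ is injective; once that is in hand the result is essentially bookkeeping with Theorems~\ref{Theorem:bounded_criteria_Lmuinfty} and~\ref{theorem:boundedbelowWCO} and the change of variables $v=\varphi(u)$. One should also confirm that this change of variables legitimately transports each supremum (and the infimum) over $T$, which is immediate once $\varphi$ is known to be a bijection of $T$.
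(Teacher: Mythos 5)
Your argument is correct, and its overall skeleton matches the paper's: necessity is reduced to showing $\varphi$ is bijective and $\psi$ nonvanishing and then invoking the bounded-below characterization (Theorem~\ref{theorem:boundedbelowWCO}) with $S_w=\{w\}$, while sufficiency exhibits $W_{1/\psi\circ\varphi^{-1},\varphi^{-1}}$ explicitly and checks its boundedness via Theorem~\ref{Theorem:bounded_criteria_Lmuinfty} and the substitution $v=\varphi(u)$. The one place you genuinely diverge is the mechanism for proving that $\psi$ never vanishes and $\varphi$ is injective: the paper passes to the adjoint, computing $\adWCO K_v=\psi(v)K_{\varphi(v)}$ and using invertibility of $\adWCO$ together with the linear independence of point evaluations (Lemma~\ref{lemma:evaluationfunctionalsindep}), whereas you work directly with surjectivity of $W_{\psi,\varphi}$, solving $W_{\psi,\varphi}f=\bigchi_w$ to get $\psi(w)f(\varphi(w))=1$ and then testing $g=\bigchi_{v_1}$ against the constraint $g(v_1)/\psi(v_1)=g(v_2)/\psi(v_2)$ forced by $\varphi(v_1)=\varphi(v_2)$. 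The two devices are dual to one another and rest on the same underlying fact (characteristic functions, equivalently point evaluations, separate points of $T$); your version buys a slightly more elementary and self-contained argument that never leaves the function space or touches $(\Lmuinf)^*$, at the cost of invoking surjectivity of the operator rather than just injectivity of its adjoint. A further immaterial difference is that you extract surjectivity of $\varphi$ from Theorem~\ref{theorem:boundedbelowWCO} where the paper cites Theorem~\ref{lemma:injectiveoperators}; both are legitimate.
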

	
	\begin{proof}
		Suppose $W_{\psi,\varphi}$ has a bounded inverse.  We know $\varphi$ is surjective by Theorem \ref{lemma:injectiveoperators}.  Next assume there is a $v\in T$ with $\psi(v)=0$. Then \[\adWCO K_v=\psi(v)K_{\varphi(v)}=0,\] where $\adWCO$ is the adjoint on the dual space of $\Lmuinf$.  However, this cannot happen since $\adWCO$ is also invertible.  Thus $\psi(v)\neq 0$ for all $v\in T$. To show $\varphi$ is injective, assume $v,w\in T$ with $\varphi(v)=\varphi(w)$.  Then \[\adWCO K_v=\psi(v)K_{\varphi(v)}=\frac{\psi(v)}{\psi(w)}\psi(w)K_{\varphi(w)}=\frac{\psi(v)}{\psi(w)}\adWCO K_w.\] Again using the fact that $\adWCO$ is invertible, we have $K_v=(\psi(v)/\psi(w))K_w$, but this can only happen if $v=w$ by Lemma \ref{lemma:evaluationfunctionalsindep}. We conclude that $\varphi$ is injective and hence bijective.  This conclusion together with the observation that an invertible operator is bounded below and Theorem \ref{theorem:boundedbelowWCO} provides the desired infimum condition.
		
		Conversely, consider the weighted composition operator $W_{1/\psi\circ\varphi^{-1},\varphi^{-1}}$.  The symbols of this operator are defined by our hypotheses on $\psi$ and $\varphi$, and \[\sup_{v\in T}\frac{\mu(v)}{\mu(\varphi^{-1}(v))}\modu{\frac1{\psi(\varphi^{-1}(v))}}=\sup_{v\in T}\frac{\mu(\varphi(v))}{\mu(v)}\modu{\frac1{\psi(v)}}<\infty\] by the infimum condition.  It follows that $W_{1/\psi\circ\varphi^{-1},\varphi^{-1}}$ is bounded by Theorem \ref{Theorem:bounded_criteria_Lmuinfty}, and thus $W_{\psi,\varphi}$ has a bounded inverse.
	\end{proof}
	
	We now focus on characterizing the isometric weighted composition operators acting on $\Lmuinf$.  The characteristic functions give insight into the necessary interplay between $\varphi$ and $\psi$ to induce an isometry.
	
	\begin{theorem}\label{Theorem:WCOIso} Let $\psi$ be a function on $T$ and $\varphi$ a self-map of $T$ for which $W_{\psi,\varphi}$ is bounded.  Then $W_{\psi,\varphi}$ is an isometry on $\Lmuinf$ if and only if $\varphi$ is surjective and $\sup_{v \in S_w}\frac{\mu(v)}{\mu(\varphi(v))}\modu{\psi(v)} = 1$ for all $w \in T$.  Moreover, $W_{\psi,\varphi}$ is a surjective isometry on $\Lmuinf$ if and only if $\varphi$ is a bijection and $\frac{\mu(v)}{\mu(\varphi(v))}\modu{\psi(v)} = 1$ for all $v \in T$.
	\end{theorem}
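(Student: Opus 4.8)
The plan is to treat the two equivalences separately, and in each to split into necessity and sufficiency. The isometry criterion will be proved by testing $W_{\psi,\varphi}$ on the normalized point masses $f(v) = \frac{1}{\mu(v)}\bigchi_{\varphi(w)}(v)$ furnished by Lemma \ref{Lemma:1/mu_in_lilLmuinf} and by recycling the bounded-below estimate \eqref{inequality:sneaky}; the surjective-isometry criterion will then follow quickly by combining the isometry criterion with the invertibility result, Theorem \ref{Theorem:InvertibilityTheorem}.

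\emph{Isometry, necessity.} Suppose $W_{\psi,\varphi}$ is an isometry. It is then bounded below, so $\varphi$ is surjective by Theorem \ref{theorem:boundedbelowWCO}. Fix $w \in T$ and let $f(v) = \frac{1}{\mu(v)}\bigchi_{\varphi(w)}(v)$, which has $\|f\|_\mu = 1$ by Lemma \ref{Lemma:1/mu_in_lilLmuinf}. Since $\bigchi_{\varphi(w)}(\varphi(v)) = 1$ precisely when $v \in S_w$, a direct computation gives $\|W_{\psi,\varphi}f\|_\mu = \sup_{v \in S_w}\frac{\mu(v)}{\mu(\varphi(v))}\modu{\psi(v)}$, and the isometry property forces this to equal $1$.

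\emph{Isometry, sufficiency.} Now assume $\varphi$ is surjective and $\sup_{v\in S_w}\frac{\mu(v)}{\mu(\varphi(v))}\modu{\psi(v)} = 1$ for every $w \in T$. Since every point of $T$ lies in its own fiber $S_v$, taking suprema shows $\sigma_{\psi,\varphi} = 1$, so Theorem \ref{Theorem:bounded_criteria_Lmuinfty} gives $\|W_{\psi,\varphi}\| = 1$ and hence $\|W_{\psi,\varphi}f\|_\mu \leq \|f\|_\mu$ for all $f$. For the reverse inequality, fix $0 < \varepsilon < 1$: the supremum condition produces, for each $w$, a point $v \in S_w$ with $\frac{\mu(v)}{\mu(\varphi(v))}\modu{\psi(v)} \geq \varepsilon$, i.e. $U_\varepsilon \cap S_w \neq \emptyset$, which is exactly the input to inequality \eqref{inequality:sneaky} in the proof of Theorem \ref{theorem:boundedbelowWCO}. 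That inequality gives $\varepsilon\sup_{w\in T}\mu(\varphi(w))\modu{f(\varphi(w))} \leq \|W_{\psi,\varphi}f\|_\mu$, and surjectivity of $\varphi$ makes the left side $\varepsilon\|f\|_\mu$. Letting $\varepsilon \to 1^-$ yields $\|f\|_\mu \leq \|W_{\psi,\varphi}f\|_\mu$, so $W_{\psi,\varphi}$ is an isometry. I expect this reverse estimate to be the main obstacle: the defining supremum need not be attained, so no single test function witnesses the norm equality, and the $\varepsilon$-approximation through \eqref{inequality:sneaky} is what bridges the gap.

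\emph{Surjective isometry.} If $W_{\psi,\varphi}$ is a surjective isometry, it is injective (being an isometry) and hence bijective as an operator, so it has bounded inverse by the open mapping theorem; Theorem \ref{Theorem:InvertibilityTheorem} then forces $\varphi$ to be a bijection. Consequently $S_w = \{w\}$ for every $w$, and the isometry condition just established becomes $\frac{\mu(v)}{\mu(\varphi(v))}\modu{\psi(v)} = 1$ for all $v$. Conversely, if $\varphi$ is a bijection and $\frac{\mu(v)}{\mu(\varphi(v))}\modu{\psi(v)} = 1$ for all $v$, then $\psi$ never vanishes and $\inf_{v\in T}\frac{\mu(v)}{\mu(\varphi(v))}\modu{\psi(v)} = 1 > 0$, so Theorem \ref{Theorem:InvertibilityTheorem} gives that $W_{\psi,\varphi}$ is invertible and in particular surjective; moreover $S_w = \{w\}$ makes $\sup_{v\in S_w}\frac{\mu(v)}{\mu(\varphi(v))}\modu{\psi(v)} = 1$ hold for all $w$, so the first part shows $W_{\psi,\varphi}$ is an isometry, completing the proof.
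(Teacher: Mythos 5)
Your proposal is correct and follows essentially the same route as the paper: test functions $\frac{1}{\mu(v)}\bigchi_{\varphi(w)}(v)$ for necessity, the $U_\varepsilon\cap S_w\neq\emptyset$ estimate from the bounded-below argument with $\varepsilon\to 1^-$ for the reverse inequality, and Theorem \ref{Theorem:InvertibilityTheorem} for the surjective case. The only (immaterial) differences are that you deduce surjectivity of $\varphi$ from boundedness below rather than from injectivity, and you route the upper bound through $\sigma_{\psi,\varphi}=1$ and Theorem \ref{Theorem:bounded_criteria_Lmuinfty} rather than computing it inline.
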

	
	\begin{proof}
		We first prove the characterization of the isometric weighted composition operators on $\Lmuinf$. Suppose $W_{\psi,\varphi}$ is an isometry on $\Lmuinf$.  Then $W_{\psi,\varphi}$ is injective, and thus $\varphi$ is surjective by Theorem \ref{lemma:injectiveoperators}.  Fix $w \in T$ and consider the function $f(v) = \frac{1}{\mu(v)}\bigchi_{\varphi(w)}(v)$.  Since $W_{\psi,\varphi}$ is an isometry on $\Lmuinf$, it follows that
		\[1 = \|f\|_\mu = \|W_{\psi,\varphi}f\|_\mu = \sup_{v \in T} \frac{\mu(v)}{\mu(\varphi(v))}\modu{\psi(v)}\bigchi_{\varphi(w)}(\varphi(v)) = \sup_{v \in S_w} \frac{\mu(v)}{\mu(\varphi(v))}\modu{\psi(v)}.\]
		
		Conversely, suppose $\varphi$ is surjective and $\sup_{v \in S_w}\frac{\mu(v)}{\mu(\varphi(v))}\modu{\psi(v)} = 1$ for all $w \in T$.  Observe that for each $v \in T$, $\frac{\mu(v)}{\mu(\varphi(v))}\modu{\psi(v)} \leq 1$ since $v \in S_v$.  Let $f \in \Lmuinf$.  It follows from Lemma \ref{Lemma:point_evaluation_bound} that
		\[\|W_{\psi,\varphi}f\|_\mu = \sup_{v \in T} \mu(v)\modu{\psi(v)}\modu{f(\varphi(v))} \leq \sup_{v \in T} \frac{\mu(v)}{\mu(\varphi(v))}\modu{\psi(v)}\|f\|_\mu \leq \|f\|_\mu.\]
		To verify the reverse inequality, fix $w \in T$ and choose $0 < \varepsilon < 1$.  Then there exists $v \in U_\varepsilon\cap S_w$.
		Observe 
		\begin{align*}
			\varepsilon\mu(\varphi(w))\modu{f(\varphi(w))} &= \varepsilon\mu(\varphi(v))\modu{f(\varphi(v))}\\
			&\leq \frac{\mu(v)}{\mu(\varphi(v))}\modu{\psi(v)}\mu(\varphi(v))\modu{f(\varphi(v))}\\
			&= \mu(v)\modu{\psi(v)}\modu{f(\varphi(v))}\\
			&\leq \|W_{\psi,\varphi} f\|_\mu.
		\end{align*} Taking the supremum over all $w \in T$, and letting $\varepsilon$ go to 1, we have $\|f\|_\mu \leq \|W_{\psi,\varphi} f\|_\mu$.  Thus, $W_{\psi,\varphi}$ is an isometry on $\Lmuinf$.
		
		We complete the proof by characterizing the surjective isometric weighted composition operators on $\Lmuinf$.  Suppose $W_{\psi,\varphi}$ is a surjective isometry.  Then $W_{\psi,\varphi}$ has bounded inverse and it follows from Theorem \ref{Theorem:InvertibilityTheorem} that $\varphi$ is bijective.  Thus $S_w = \{w\}$ for every $w \in T$ and \[\frac{\mu(w)}{\mu(\varphi(w))}\modu{\psi(w)} = \sup_{v \in S_w} \frac{\mu(v)}{\mu(\varphi(v))}\modu{\psi(v)} = 1\] for every $w \in T$.
		
		Finally, suppose $\varphi$ is a bijection and $\frac{\mu(v)}{\mu(\varphi(v))}\modu{\psi(v)} = 1$ for all $v \in T$.  Then $W_{\psi,\varphi}$ is an isometry.  It follows from Theorem \ref{Theorem:InvertibilityTheorem} that $W_{\psi,\varphi}$ is invertible, and thus surjective.
	\end{proof}
	
	We complete this section by considering the results applied to the composition and multiplication operators acting on $\Lmuinf$.
	
	\begin{corollary}\label{Corollary:compositioninvertible}
		Suppose $\psi$ is a function on $T$ and $\varphi$ a self-map of $T$ for which both $C_\varphi$ and $M_\psi$ are bounded on $\Lmuinf$.  
		\begin{enumerate}
			\item[(a)] Then $C_\varphi$ has a bounded inverse if and only if $\varphi$ is bijective and \[\inf_{v\in T} \frac{\mu(v)}{\mu(\varphi(v))}>0.\]  In this case, we have \[\|C_\varphi^{-1}\|=\|C_{\varphi^{-1}}\|=\sup_{v\in T}\frac{\mu(v)}{\mu(\varphi^{-1}(v))}=\sup_{v\in T}\frac{\mu(\varphi(v))}{\mu(v)}.\]
			\item[(b)] Then $M_\psi$ has a bounded inverse if and only if $\inf_{v\in T} \modu{\psi(v)}>0$.  In this case, we have \[\|M_\psi^{-1}\| = \|M_{1/\psi}\| = \|1/\psi\|_\infty.\]
		\end{enumerate}
	\end{corollary}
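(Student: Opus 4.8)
The plan is to obtain both parts as immediate specializations of Theorem~\ref{Theorem:InvertibilityTheorem}, reading the norm formulas off that theorem together with Theorem~\ref{Theorem:bounded_criteria_Lmuinfty}.

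For part~(a) I would apply Theorem~\ref{Theorem:InvertibilityTheorem} with the multiplication symbol $\psi\equiv 1$, so that $W_{\psi,\varphi}=C_\varphi$ and $\modu{\psi(v)}=1$ for every $v\in T$. The quantity $\frac{\mu(v)}{\mu(\varphi(v))}\modu{\psi(v)}$ then reduces to $\frac{\mu(v)}{\mu(\varphi(v))}$, so the theorem says $C_\varphi$ has a bounded inverse exactly when $\varphi$ is bijective and $\inf_{v\in T}\frac{\mu(v)}{\mu(\varphi(v))}>0$, and in that case $C_\varphi^{-1}=W_{1,\varphi^{-1}}=C_{\varphi^{-1}}$. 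The norm identity $\|C_\varphi^{-1}\|=\sup_{v\in T}\frac{\mu(v)}{\mu(\varphi^{-1}(v))}=\sup_{v\in T}\frac{\mu(\varphi(v))}{\mu(v)}$ is precisely the norm formula of Theorem~\ref{Theorem:InvertibilityTheorem} with $\psi\equiv 1$, the second equality coming from reindexing the supremum by the bijection $v\mapsto\varphi(v)$ (an observation already recorded in that theorem).

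For part~(b) I would instead take $\varphi$ to be the identity self-map of $T$, so that $W_{\psi,\varphi}=M_\psi$; here $\varphi$ is automatically a bijection and $\frac{\mu(v)}{\mu(\varphi(v))}=1$ for all $v$, so the infimum condition of Theorem~\ref{Theorem:InvertibilityTheorem} collapses to $\inf_{v\in T}\modu{\psi(v)}>0$, and the inverse is $W_{1/\psi,\varphi^{-1}}=M_{1/\psi}$. This infimum condition forces $1/\psi$ to be bounded, with $\sigma_{1/\psi,\varphi}=\sup_{v\in T}\modu{1/\psi(v)}=\|1/\psi\|_\infty<\infty$; hence Theorem~\ref{Theorem:bounded_criteria_Lmuinfty} gives $\|M_\psi^{-1}\|=\|M_{1/\psi}\|=\sigma_{1/\psi,\varphi}=\|1/\psi\|_\infty$.

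There is essentially no obstacle here: the work is bookkeeping. The only points to check are that each specialization legitimately meets the hypotheses of the cited theorems --- in~(a) that $C_\varphi$ is bounded, and in~(b) that $M_\psi$ is bounded and that $\varphi=\mathrm{id}$ is an admissible self-map --- both of which are assumed. It is worth remarking briefly why "$\varphi$ bijective" disappears from statement~(b): the identity map is trivially bijective, so the entire content of the condition reduces to the infimum on $\modu{\psi}$.
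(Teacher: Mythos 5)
Your proposal is correct and matches the paper's (implicit) argument exactly: the corollary is stated without proof precisely because it is the specialization of Theorem~\ref{Theorem:InvertibilityTheorem} to $\psi\equiv 1$ for part~(a) and to $\varphi=\mathrm{id}$ for part~(b), with the norm formulas read off from that theorem (and, for~(b), Theorem~\ref{Theorem:bounded_criteria_Lmuinfty}). Nothing further is needed.
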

	
	The following characterization of the isometric composition operators on $\Lmuinf$ completes the work of the authors in \cite{AllenPons:2016}, where only partial results were obtained for $\Lmuinf$.  The characterization of the isometric multiplication operators on $\Lmuinf$ agrees with \cite[Theorem 3.6]{AllenCraig:2015}, while providing the additional conclusion that all such isometries are in fact surjective.
	
	\begin{corollary}
		Suppose $\psi$ is a function on $T$ and $\varphi$ a self-map of $T$ for which both $C_\varphi$ and $M_\psi$ are bounded on $\Lmuinf$.
		\begin{enumerate}
			\item[(a)] The operator $C_\varphi$ is an isometry on $\Lmuinf$ if and only if $\varphi$ is a surjective and $\sup_{v \in S_w}\frac{\mu(v)}{\mu(\varphi(v))} = 1$ for all $w \in T$.  
			\item[(b)] The operator $C_\varphi$ is a surjective isometry on $\Lmuinf$ if and only if $\varphi$ is a bijection and $\frac{\mu(v)}{\mu(\varphi(v))} = 1$ for all $v \in T$.
			\item[(c)] For the multiplication operator $M_\psi$, the following are equivalent:
			\begin{enumerate}
				\item[i.] $M_\psi$ is an isometry on $\Lmuinf$,
				\item[ii.] $M_\psi$ is a surjective isometry on $\Lmuinf$,
				\item[iii.] $\modu{\psi(v)} = 1$ for all $v \in T$.
			\end{enumerate}
		\end{enumerate}
	\end{corollary}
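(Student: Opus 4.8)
The plan is to derive the entire corollary from Theorem~\ref{Theorem:WCOIso} by specializing the symbols of the weighted composition operator: parts (a) and (b) follow by taking $\psi \equiv 1$, so that $W_{\psi,\varphi} = C_\varphi$, and part (c) follows by taking $\varphi$ to be the identity map on $T$, so that $W_{\psi,\varphi} = M_\psi$. In each case the standing boundedness hypotheses on $C_\varphi$ and $M_\psi$ guarantee that Theorem~\ref{Theorem:WCOIso} applies.

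For (a) and (b), note $W_{1,\varphi} = C_\varphi$ is bounded on $\Lmuinf$ by hypothesis, and $\modu{\psi(v)} = 1$ for every $v$. Theorem~\ref{Theorem:WCOIso} then states that $C_\varphi$ is an isometry on $\Lmuinf$ if and only if $\varphi$ is surjective and $\sup_{v\in S_w}\frac{\mu(v)}{\mu(\varphi(v))} = 1$ for all $w\in T$, which is precisely (a); and that $C_\varphi$ is a surjective isometry if and only if $\varphi$ is a bijection and $\frac{\mu(v)}{\mu(\varphi(v))} = 1$ for all $v\in T$, which is precisely (b).

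For (c), take $\varphi = \mathrm{id}_T$, so $W_{\psi,\mathrm{id}} = M_\psi$ is bounded on $\Lmuinf$ by hypothesis (alternatively, when $\modu{\psi(v)} \equiv 1$ one has $\sigma_{\psi,\mathrm{id}} = 1 < \infty$, giving boundedness directly from Theorem~\ref{Theorem:bounded_criteria_Lmuinfty}). Since $\varphi(v) = v$ we get $\frac{\mu(v)}{\mu(\varphi(v))} = 1$ and $S_w = \varphi^{-1}(\{w\}) = \{w\}$, and the identity map is trivially bijective. Hence both characterizations in Theorem~\ref{Theorem:WCOIso} collapse to the single condition $\sup_{v\in S_w}\frac{\mu(v)}{\mu(\varphi(v))}\modu{\psi(v)} = \modu{\psi(w)} = 1$ for every $w\in T$, that is, $\modu{\psi(v)} = 1$ for all $v\in T$. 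This gives (i)$\Leftrightarrow$(iii) and (ii)$\Leftrightarrow$(iii); as (ii)$\Rightarrow$(i) is immediate, the three statements are equivalent.

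There is no real obstacle here; the proof is a bookkeeping exercise in specialization. The only point that warrants care is the observation that in the multiplication case the inverse-image sets $S_w$ reduce to singletons, so that the \emph{a priori} distinct conditions characterizing isometries and surjective isometries in Theorem~\ref{Theorem:WCOIso} coincide, yielding the stronger conclusion that every isometric multiplication operator on $\Lmuinf$ is automatically surjective.
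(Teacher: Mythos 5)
Your proposal is correct and is exactly the intended derivation: the paper states this as an immediate corollary of Theorem \ref{Theorem:WCOIso}, obtained by specializing $\psi \equiv 1$ for parts (a) and (b) and $\varphi = \mathrm{id}_T$ for part (c). Your closing remark that the singleton sets $S_w$ force every isometric multiplication operator to be surjective matches the observation the paper makes in the paragraph preceding the corollary.
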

	
	\section{Fredholm Weighted Composition Operators}\label{Section:Fredholm}
	Recall a linear operator $A$ between Banach spaces is Fredholm if $A$ has closed range and both $\ker(A)$ and $\ker(A^*)$ are finite dimensional.  In fact, the condition of closed range is redundant, since this follows from the dimension of the cokernel being finite.  However, this condition typically remains to mirror the definition of Fredholm operators on a Hilbert space.  Alternatively, $A$ is Fredholm if there exists a bounded operator $S$ such that $SA-I$ and $AS-I$ are both compact.  This is sometimes referred to as Atkinson's Theorem.  Every invertible operator is Fredholm.  The converse is not true, but an operator that is Fredholm must be invertible ``modulo the compacts".  Thus a compact operator can not be Fredholm.  For a general reference on Fredholm operators see \cite[Section XI.2]{Conway:85} or \cite[Section 5.8]{MacCluer:09}.
	
	To classify the Fredholm weighted composition operators, we begin with a sequence of useful lemmas.
	
	\begin{lemma}\label{lemma:psifinitezeros}
		Let $\psi$ be a function on $T$ and $\varphi$ a self-map of $T$ for which $W_{\psi,\varphi}$ is bounded on $\Lmuinf$.  If $W_{\psi,\varphi}$ is Fredholm on $\Lmuinf$, then $\psi$ can have at most finitely many zeros. 
	\end{lemma}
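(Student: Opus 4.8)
The plan is to argue by contradiction, and the key observation is simply what the adjoint $\adWCO$ does to point-evaluation functionals. First I would record that for every $v\in T$ and every $f\in\Lmuinf$ one has $(\adWCO K_v)(f)=K_v(W_{\psi,\varphi}f)=(W_{\psi,\varphi}f)(v)=\psi(v)f(\varphi(v))=\psi(v)K_{\varphi(v)}(f)$, so that $\adWCO K_v=\psi(v)K_{\varphi(v)}$. This is exactly the identity already exploited in the proof of Theorem \ref{Theorem:InvertibilityTheorem}, so no new work is needed to justify it beyond noting that $\adWCO$ is a bounded operator on $(\Lmuinf)^*$ because $W_{\psi,\varphi}$ is bounded on $\Lmuinf$.

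Next I would suppose, toward a contradiction, that $Z=\psi^{-1}(0)$ is infinite, and choose a sequence $(v_n)$ of distinct points of $Z$. Since $\psi(v_n)=0$, the displayed identity gives $\adWCO K_{v_n}=\psi(v_n)K_{\varphi(v_n)}=0$, so each $K_{v_n}$ lies in $\ker(\adWCO)$. By Lemma \ref{lemma:evaluationfunctionalsindep}, every finite subcollection of $\{K_{v_n}\}_{n\in\N}$ is linearly independent in $(\Lmuinf)^*$; hence the entire (infinite) collection is linearly independent, and $\ker(\adWCO)$ is infinite-dimensional. But the definition of a Fredholm operator requires $\ker(\adWCO)$ to be finite-dimensional, which is the desired contradiction. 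Therefore $Z$ must be finite, i.e. $\psi$ has at most finitely many zeros.

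As for obstacles: there really is not much of one here. The computation of $\adWCO$ on the $K_v$ is routine, and the passage from linear independence of finite families of point evaluations (Lemma \ref{lemma:evaluationfunctionalsindep}) to linear independence of an infinite family is standard linear algebra. The only thing I would double-check is that the paper's chosen definition of Fredholm is being used in the form ``$\ker(\adWCO)$ finite-dimensional,'' which it is; if one instead worked from the cokernel formulation, one would note that $\ker(\adWCO)=(\operatorname{range}W_{\psi,\varphi})^{\perp}$ has dimension equal to the codimension of the (closed) range, so the conclusion is unchanged.
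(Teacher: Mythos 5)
Your argument is correct and is essentially identical to the paper's own proof: both compute $\adWCO K_v=\psi(v)K_{\varphi(v)}$, conclude that each zero of $\psi$ contributes a point-evaluation functional to $\ker(\adWCO)$, and invoke Lemma \ref{lemma:evaluationfunctionalsindep} to contradict the finite-dimensionality of $\ker(\adWCO)$ required by the Fredholm hypothesis. No gaps.
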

	
	\begin{proof}
		First, we show $K_w\in\ker(\adWCO)$ whenever $\psi(w)=0$.  Suppose $w\in T$ with $\psi(w)=0$ and let $f\in\Lmuinf$. Then \[(\adWCO K_wf)(v)=K_w(\psi(v)f(\varphi(v)))=\psi(w)f(\varphi(w))=0.\] Since $f$ was arbitrary, this implies $\adWCO K_w$ is the zero functional and thus $K_w\in\ker(\adWCO)$.
		
		If $\psi$ has infinitely many zeros $\{v_i\}_{i=1}^{\infty}$, then $\{K_{v_i}\}_{i=1}^{\infty}\subseteq \textup{ker}(\adWCO).$ From Lemma \ref{lemma:evaluationfunctionalsindep}, the set $\{K_{v_i}\}$ is linearly independent and thus $\dim(\ker(\adWCO))=\infty.$  However, since $W_{\psi,\varphi}$ is Fredholm, $\dim(\ker(\adWCO))<\infty$.  This is a contradiction and the result follows.
	\end{proof}
	
	\begin{lemma}
		Let $\psi$ be a function on $T$ and $\varphi$ a self-map of $T$. If $\varphi$ has finite range and $W_{\psi,\varphi}$ is bounded on $\Lmuinf$, then $W_{\psi,\varphi}$ cannot be Fredholm.
	\end{lemma}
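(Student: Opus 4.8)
The plan is to exhibit an infinite-dimensional subspace of $\ker(W_{\psi,\varphi})$, which immediately contradicts the requirement that a Fredholm operator have finite-dimensional kernel.

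First I would note that $T$ is infinite: it is unbounded, so $\sup_{v\in T}\modu{v}=\infty$, which is impossible for a finite set. Since $\varphi$ has finite range by hypothesis, the set $T\setminus\varphi(T)$ is therefore infinite. For each $u\in T\setminus\varphi(T)$ consider $\bigchi_u$, which belongs to $\lilLmuinf\subseteq\Lmuinf$ by Lemma~\ref{Lemma:1/mu_in_lilLmuinf}. Because $u\notin\varphi(T)$, we have $\bigchi_u(\varphi(v))=0$ for every $v\in T$, and hence $(W_{\psi,\varphi}\bigchi_u)(v)=\psi(v)\bigchi_u(\varphi(v))=0$ for all $v$; that is, $\bigchi_u\in\ker(W_{\psi,\varphi})$. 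The family $\{\bigchi_u:u\in T\setminus\varphi(T)\}$ is linearly independent, since these functions are supported on pairwise disjoint singletons (the same observation used in the proof of Lemma~\ref{lemma:evaluationfunctionalsindep}), so $\dim\ker(W_{\psi,\varphi})=\infty$. As a Fredholm operator has finite-dimensional kernel, $W_{\psi,\varphi}$ cannot be Fredholm.

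There is essentially no obstacle in this argument; the only point needing care is verifying that $T\setminus\varphi(T)$ is genuinely infinite, which follows at once from local finiteness and unboundedness of $T$. If one prefers to phrase the contradiction on the adjoint side, one can instead observe that $W_{\psi,\varphi}f=\sum_{i=1}^{k}f(w_i)\,\psi\bigchi_{\varphi^{-1}(w_i)}$ when $\varphi(T)=\{w_1,\dots,w_k\}$, so $W_{\psi,\varphi}$ has finite rank; its range is then a closed subspace of infinite codimension in the infinite-dimensional space $\Lmuinf$, forcing $\ker(\adWCO)$ to be infinite-dimensional. Yet a third route is simply to combine the theorem of Section~\ref{Section:Compactness} asserting that a bounded $W_{\psi,\varphi}$ with finite-range $\varphi$ is compact, together with the fact recalled at the start of this section that a compact operator on an infinite-dimensional Banach space is never Fredholm. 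I would present the kernel argument as the main proof, as it is the most elementary and self-contained.
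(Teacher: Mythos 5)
Your proposal is correct, but your main argument is genuinely different from the paper's. The paper disposes of this lemma in one line by citing the theorem of Section~\ref{Section:Compactness}: a bounded $W_{\psi,\varphi}$ with finite-range $\varphi$ is compact, and a compact operator on an infinite-dimensional Banach space is never Fredholm --- exactly the ``third route'' you mention at the end. Your primary argument instead exhibits an infinite-dimensional kernel directly: since $T$ is infinite and $\varphi(T)$ is finite, $T\setminus\varphi(T)$ is infinite, and the linearly independent family $\{\bigchi_u : u\in T\setminus\varphi(T)\}\subseteq\ker(W_{\psi,\varphi})$ kills the Fredholm property. This is sound and entirely self-contained; it needs none of the compactness machinery, and it is in fact the same device the paper uses later in Lemma~\ref{lemma:imagefinite} to show that a Fredholm $W_{\psi,\varphi}$ forces $T\setminus\varphi(T)$ to be finite. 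What the paper's route buys is brevity given results already in hand; what yours buys is independence from Section~\ref{Section:Compactness} and a concrete witness for the failure of the Fredholm condition. Your secondary finite-rank/cokernel observation is also correct. No gaps.
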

	
	\begin{proof}
		This follows from the fact that such a weighted composition operator is compact, and hence cannot be Fredholm.
	\end{proof}
	
	\begin{lemma}\label{lemma:pullbackbound}
		Let $\psi$ be a function on $T$ and $\varphi$ a self-map of $T$ for which $W_{\psi,\varphi}$ is bounded on $\Lmuinf$. If $W_{\psi,\varphi}$ is Fredholm on $\Lmuinf$, then there is an $N\in\N$ such that $\varphi^{-1}(w)$ contains at most $N$ points for every $w\in \varphi(T)$.
	\end{lemma}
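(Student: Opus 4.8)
The plan is to argue by contradiction: assuming no such bound $N$ exists, I will exhibit an infinite linearly independent family inside $\ker(\adWCO)$, contradicting the fact that a Fredholm operator has finite-dimensional cokernel. The engine is the identity $\adWCO K_v = \psi(v)\,K_{\varphi(v)}$ for all $v\in T$, already established in the proof of Lemma~\ref{lemma:psifinitezeros}: if $v\neq v'$ but $\varphi(v)=\varphi(v')$, then $\adWCO$ sends the two-dimensional span of $K_v$ and $K_{v'}$ into the line spanned by $K_{\varphi(v)}$, so a nonzero combination $\alpha K_v+\beta K_{v'}$ lies in $\ker(\adWCO)$; the natural choice is $\alpha=\psi(v')$, $\beta=-\psi(v)$.

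Suppose then that for every $N\in\N$ there is a $w\in\varphi(T)$ with $\modu{\varphi^{-1}(w)}>N$ (this includes the possibility that some fiber is infinite). By Lemma~\ref{lemma:psifinitezeros}, the zero set $Z=\psi^{-1}(0)$ is finite, say $\modu{Z}=m$. I would select points recursively as follows. Given pairwise distinct $a_1,b_1,\dots,a_k,b_k$, put $F_k=\{a_1,b_1,\dots,a_k,b_k\}$ and choose $w\in\varphi(T)$ with $\modu{\varphi^{-1}(w)}>2k+m+2$; since $\modu{F_k\cup Z}\le 2k+m$, the set $\varphi^{-1}(w)\setminus(F_k\cup Z)$ contains at least two points, which we take as $a_{k+1}$ and $b_{k+1}$. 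This produces a sequence of pairwise distinct points $a_1,b_1,a_2,b_2,\dots$ with $\varphi(a_k)=\varphi(b_k)$ and $\psi(a_k)\neq 0$ (indeed $\psi(b_k)\neq 0$ as well) for every $k$.

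Set $h_k=\psi(b_k)K_{a_k}-\psi(a_k)K_{b_k}$. Then $h_k\neq 0$ because $\psi(a_k)\neq 0$ and $K_{a_k},K_{b_k}$ are linearly independent by Lemma~\ref{lemma:evaluationfunctionalsindep}, while $\adWCO h_k=\psi(b_k)\psi(a_k)K_{\varphi(a_k)}-\psi(a_k)\psi(b_k)K_{\varphi(b_k)}=0$, so $h_k\in\ker(\adWCO)$. Any finite relation $\sum_k c_k h_k=0$ expands into a vanishing linear combination of the pairwise distinct functionals among $\{K_{a_k},K_{b_k}\}$, which by Lemma~\ref{lemma:evaluationfunctionalsindep} forces $c_k\psi(b_k)=c_k\psi(a_k)=0$ and hence $c_k=0$ for all $k$; thus $\{h_k\}_{k\ge 1}$ is linearly independent. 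This makes $\ker(\adWCO)$ infinite-dimensional, contradicting Fredholmness, and the lemma follows. The only place that needs care is the recursive selection step, where one must take the fibers large enough to avoid both the previously used points and the finitely many zeros of $\psi$ — and it is precisely Lemma~\ref{lemma:psifinitezeros} that guarantees this can be done, so that is where the hypothesis of finitely many zeros is used.
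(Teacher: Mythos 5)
Your proof is correct and follows essentially the same route as the paper: both arguments use the identity $\adWCO K_v=\psi(v)K_{\varphi(v)}$ to turn pairs of points in a common fiber where $\psi$ is nonzero into linearly independent elements of $\ker(\adWCO)$, and both invoke Lemma~\ref{lemma:psifinitezeros} to guarantee that arbitrarily large fibers still contain enough points off the zero set of $\psi$. The only cosmetic difference is that you build one explicit infinite independent family by recursion across fibers, whereas the paper extracts arbitrarily many independent kernel elements from a single sufficiently large fiber and lets $n\to\infty$; both yield the same contradiction with finite-dimensionality of $\ker(\adWCO)$.
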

	
	\begin{proof}
		First, suppose there exist points $\{v_1, \dots, v_{2m}\}$ in $T$ such that $\varphi(v_1) = \cdots = \varphi(v_{2m})$ and $\psi(v_i) \neq 0$ for $1 \leq i \leq 2m$.  For $1 \leq j \leq m$, define $k_j = \frac{1}{\psi(v_{2j})}K_{v_{2j}} - \frac{1}{\psi(v_{2j-1})}K_{v_{2j-1}}.$ It follows that $\{k_1,\dots,k_m\} \subseteq \ker(\adWCO)$.
		
		Now, if  $W_{\psi,\varphi}$ is Fredholm and the condition of the lemma does not hold, then for every $n$ there is a $w_n\in \varphi(T)$ such that $\varphi^{-1}(w_n)$ contains at least $n$ elements. Furthermore, $\psi$ has at most finitely many zeros by Lemma \ref{lemma:psifinitezeros}, say $M$, and thus for $n$ sufficiently large $\varphi^{-1}(w_n)$ contains at least $n-M>0$ points where $\psi$ does not vanish. Letting $n$ tend to infinity, the first part of the proof implies $\dim(\ker(\adWCO))=\infty$. However this contradicts the fact that $W_{\psi,\varphi}$ is Fredholm.
	\end{proof}
	
	\begin{lemma} \label{lemma:singleton}
		Let $\psi$ be a function on $T$ and $\varphi$ a self-map of $T$ for which $W_{\psi,\varphi}$ is bounded on $\Lmuinf$. If $W_{\psi,\varphi}$ is Fredholm on $\Lmuinf$, then $\varphi^{-1}(w)$ contains exactly one element for all but a finite number of points $w\in\varphi(T)$.
	\end{lemma}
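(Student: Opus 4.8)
The plan is to argue by contradiction: if infinitely many fibers of $\varphi$ contain at least two points, then we can manufacture infinitely many linearly independent elements of $\ker(\adWCO)$, contradicting the Fredholm hypothesis. This reuses the kernel construction from the proof of Lemma~\ref{lemma:pullbackbound}, but the new ingredient is that fibers over \emph{distinct} base points are disjoint, which is what upgrades ``uniformly bounded fiber size'' to ``all but finitely many fibers are singletons.''

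First I would negate the conclusion: since every $w\in\varphi(T)$ has $\varphi^{-1}(w)\neq\emptyset$, failure of the lemma means there are infinitely many distinct points $w_1,w_2,\dots$ in $\varphi(T)$ with $\modu{\varphi^{-1}(w_j)}\geq 2$. By Lemma~\ref{lemma:psifinitezeros}, $\psi$ has at most finitely many zeros, say $M$ of them. Because the sets $\varphi^{-1}(w_j)$ are pairwise disjoint, at most $M$ of them can contain a zero of $\psi$; discarding those finitely many indices and re-indexing, we may assume that for every $j$ the fiber $\varphi^{-1}(w_j)$ contains two distinct points $a_j$ and $b_j$ with $\psi(a_j)\neq 0$ and $\psi(b_j)\neq 0$.

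Next, exactly as in the proof of Lemma~\ref{lemma:pullbackbound}, for each $j$ set \[k_j=\frac{1}{\psi(a_j)}K_{a_j}-\frac{1}{\psi(b_j)}K_{b_j}.\] For any $f\in\Lmuinf$ we have $(\adWCO K_{a_j})f=K_{a_j}(W_{\psi,\varphi}f)=\psi(a_j)f(\varphi(a_j))=\psi(a_j)f(w_j)$, and likewise $(\adWCO K_{b_j})f=\psi(b_j)f(w_j)$, so $\adWCO k_j=K_{w_j}-K_{w_j}=0$; that is, $k_j\in\ker(\adWCO)$ for every $j$. Finally I would check that $\{k_j\}_{j=1}^\infty$ is linearly independent: the points $\{a_j,b_j\}_{j=1}^\infty$ are pairwise distinct (distinct within each fiber, and fibers over distinct $w_j$ are disjoint), so if $\sum_{j=1}^n c_j k_j=0$ then expanding gives a vanishing linear combination of the distinct evaluation functionals $K_{a_1},K_{b_1},\dots,K_{a_n},K_{b_n}$ with coefficients $c_j/\psi(a_j)$ and $-c_j/\psi(b_j)$, which by Lemma~\ref{lemma:evaluationfunctionalsindep} forces every $c_j=0$. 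Hence $\dim\ker(\adWCO)=\infty$, contradicting the fact that $W_{\psi,\varphi}$ is Fredholm.

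The step that requires the most care is the very last one, namely confirming genuine infinite-dimensionality of $\operatorname{span}\{k_j\}$; but once one observes that each $k_j$ is supported on a pair of vertices disjoint from every other $k_i$ (this is precisely where disjointness of the fibers is used), linear independence is immediate from Lemma~\ref{lemma:evaluationfunctionalsindep}. Everything else is the same bookkeeping already carried out in Lemmas~\ref{lemma:psifinitezeros} and \ref{lemma:pullbackbound}.
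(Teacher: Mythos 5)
Your proof is correct and follows essentially the same route as the paper's: both arguments use the finiteness of $Z$ (Lemma \ref{lemma:psifinitezeros}) to extract infinitely many fibers each containing two points where $\psi$ is nonzero, build the differences $\frac{1}{\psi(a_j)}K_{a_j}-\frac{1}{\psi(b_j)}K_{b_j}$ in $\ker(\adWCO)$, and invoke Lemma \ref{lemma:evaluationfunctionalsindep} together with disjointness of the fibers to get infinite dimensionality. Your write-up is, if anything, slightly more explicit than the paper's about why infinitely many fibers survive the discarding step and why the $k_j$ are linearly independent.
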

	
	\begin{proof}
		Suppose $W_{\psi,\varphi}$ is Fredholm and define \[K=\{w\in\varphi(T): \varphi^{-1}(w) \textup{ contains more than one element}\}.\]  Assume to the contrary that $K$ is an infinite set.  From Lemma 7.1 it follows that $Z = \psi^{-1}(0)$ is finite.  Thus, there is an infinite subset $K_1\subseteq K$ such that for every $w\in K_1$, there exists two distinct points $v_1,v_2\in\varphi^{-1}(w)$ for which $\psi(v_1) \neq 0$ and $\psi(v_2) \neq 0$.  Define $k_w=
		\frac{1}{\psi(v_1)}K_{v_1} - \frac{1}{\psi(v_2)}K_{v_2}$ and let $S = \{k_w : w \in K_1\}$. Note $S$ is infinite and linearly independent. A computation similar to the one above shows $S\subseteq \ker(\adWCO).$  Again, this contradicts the fact that $W_{\psi,\varphi}$ is Fredholm and the conclusion follows.
	\end{proof}
	
	\begin{lemma} \label{lemma:imagefinite}
		Let $\psi$ be a function on $T$ and $\varphi$ a self-map of $T$ for which $W_{\psi,\varphi}$ is bounded on $\Lmuinf$. If $W_{\psi,\varphi}$ is Fredholm on $\Lmuinf$, then $T\setminus\varphi(T)$ must be finite.
	\end{lemma}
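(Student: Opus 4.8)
The plan is to argue by contradiction, exploiting that points outside the range of $\varphi$ produce elements of the kernel of $W_{\psi,\varphi}$ itself (in contrast to the preceding lemmas, which worked with $\ker(\adWCO)$). Suppose $T\setminus\varphi(T)$ is infinite, say it contains distinct points $\{w_i\}_{i=1}^\infty$. For each such $w_i$, consider the characteristic function $\bigchi_{w_i}$. By Lemma \ref{Lemma:1/mu_in_lilLmuinf} this is a nonzero element of $\lilLmuinf\subseteq\Lmuinf$.

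Next I would observe that $\bigchi_{w_i}\in\ker(W_{\psi,\varphi})$. Indeed, for every $v\in T$ we have $\varphi(v)\neq w_i$ (since $w_i\notin\varphi(T)$), hence $(W_{\psi,\varphi}\bigchi_{w_i})(v)=\psi(v)\bigchi_{w_i}(\varphi(v))=0$, so $W_{\psi,\varphi}\bigchi_{w_i}$ is the zero function. The collection $\{\bigchi_{w_i}\}_{i=1}^\infty$ is linearly independent by the same reasoning used in the proof of Lemma \ref{lemma:evaluationfunctionalsindep}: each $\bigchi_{w_i}$ vanishes everywhere except at the point $w_i$, so no nontrivial finite linear combination can be the zero function. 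Therefore $\ker(W_{\psi,\varphi})$ contains an infinite linearly independent set, i.e.\ $\dim\big(\ker(W_{\psi,\varphi})\big)=\infty$. This contradicts the assumption that $W_{\psi,\varphi}$ is Fredholm, since a Fredholm operator has finite-dimensional kernel. Hence $T\setminus\varphi(T)$ must be finite.

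There is essentially no serious obstacle here; this is the most elementary link in the chain of Lemmas \ref{lemma:psifinitezeros}--\ref{lemma:imagefinite}. The only point requiring a moment's care is noting that the characteristic functions genuinely lie in $\Lmuinf$ (handled by Lemma \ref{Lemma:1/mu_in_lilLmuinf}) and recording the linear independence explicitly, but both are immediate. One could also phrase the argument via $\adWCO$ for uniformity with the earlier lemmas, but the direct kernel computation is cleaner and avoids invoking the adjoint.
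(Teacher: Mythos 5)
Your argument is correct and is essentially identical to the paper's own proof: both exhibit the characteristic functions $\bigchi_{w}$ for $w\in T\setminus\varphi(T)$ as a linearly independent subset of $\ker(W_{\psi,\varphi})$, contradicting the finite-dimensionality of the kernel of a Fredholm operator. No gaps.
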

	
	\begin{proof}
		Assume $T\setminus\varphi(T)$ is infinite.  First notice $\{\bigchi_{w}:w\in T\setminus\varphi(T)\}$ is linearly independent in $\Lmuinf$.  Also, for $w\in T\setminus\varphi(T)$, we have \[(W_{\psi,\varphi} \bigchi_w)(v)=\psi(v)\bigchi_w(\varphi(v))=0.\] Thus $\{\bigchi_{w}:w\in T\setminus\varphi(T)\}\subseteq \ker(W_{\psi,\varphi})$. However, this contradicts the fact that $\dim(\ker(W_{\psi,\varphi}))<\infty$.
	\end{proof}
	
	The last lemma we need is derived from Theorems \ref{Theorem:bounded_criteria_Lmuinfty} and \ref{Theorem:InvertibilityTheorem}.
	
	\begin{lemma}\label{lemma:restrictionnorm} Let $X$ and $Y$ be unbounded subsets of $T$.  Suppose $\psi$ is a function on $T$ and $\varphi$ a self-map of $T$ for which $W_{\psi,\varphi}$ is bounded on $\Lmuinf(T)$.  If $\varphi:X \to Y$, then $W_{\psi,\varphi}:\Lmuinf(Y) \to \Lmuinf(X)$ is bounded and satisfies \[\|W_{\psi,\varphi}\| = \sup_{v \in X} \frac{\mu(v)}{\mu(\varphi(v))}\modu{\psi(v)}.\]
		Moreover, if $\varphi$ is bijective and $\inf_{v\in X} \frac{\mu(v)}{\mu(\varphi(v))}\modu{\psi(v)}>0,$  then $W_{\psi,\varphi}$ has bounded inverse with \[\|W_{\psi,\varphi}^{-1}\|=\sup_{v\in Y}\;\frac{\mu(v)}{\mu(\varphi^{-1}(v))}\modu{\frac{1}{\psi(\varphi^{-1}(v))}}=\sup_{v\in X}\frac{\mu(\varphi(v))}{\mu(v)}\modu{\frac{1}{\psi(v)}}.\]
	\end{lemma}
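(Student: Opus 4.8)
The plan is to reprove Theorems~\ref{Theorem:bounded_criteria_Lmuinfty} and~\ref{Theorem:InvertibilityTheorem} on the restricted domains. The only new wrinkle is that $X$ and $Y$ need not contain the root $o$, but this is harmless: the weighted Banach spaces $\Lmuinf(X)$ and $\Lmuinf(Y)$ and the estimate in Lemma~\ref{Lemma:point_evaluation_bound} depend only on the weight $\mu$ and on a supremum over the relevant set, not on the root. First I would record that, since $W_{\psi,\varphi}$ is bounded on $\Lmuinf(T)$, Theorem~\ref{Theorem:bounded_criteria_Lmuinfty} gives $\sigma_{\psi,\varphi}<\infty$, so the quantity $\sigma_X:=\sup_{v\in X}\frac{\mu(v)}{\mu(\varphi(v))}\modu{\psi(v)}\leq\sigma_{\psi,\varphi}$ is finite.

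For the first assertion, given $f\in\Lmuinf(Y)$ with $\|f\|_\mu\le1$, Lemma~\ref{Lemma:point_evaluation_bound} applied in $\Lmuinf(Y)$ (valid since $\varphi(v)\in Y$ for $v\in X$) yields \[\|W_{\psi,\varphi}f\|_\mu=\sup_{v\in X}\mu(v)\modu{\psi(v)}\modu{f(\varphi(v))}\le\sup_{v\in X}\frac{\mu(v)}{\mu(\varphi(v))}\modu{\psi(v)}=\sigma_X,\] so $\|W_{\psi,\varphi}\|\le\sigma_X$. For the reverse inequality I would test against $g(v)=\frac1{\mu(v)}$, which lies in $\Lmuinf(Y)$ with $\|g\|_\mu=1$; then for each fixed $w\in X$ one has \[\frac{\mu(w)}{\mu(\varphi(w))}\modu{\psi(w)}\le\sup_{v\in X}\mu(v)\modu{\psi(v)}\modu{g(\varphi(v))}=\|W_{\psi,\varphi}g\|_\mu\le\|W_{\psi,\varphi}\|,\] and taking the supremum over $w\in X$ gives $\sigma_X\le\|W_{\psi,\varphi}\|$, hence equality.

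For the second assertion, assume $\varphi\colon X\to Y$ is bijective and $\inf_{v\in X}\frac{\mu(v)}{\mu(\varphi(v))}\modu{\psi(v)}>0$; in particular $\psi$ is nonvanishing on $X$, so $1/(\psi\circ\varphi^{-1})$ is a well-defined function on $Y$. For $f\in\Lmuinf(X)$, the substitution $u=\varphi^{-1}(v)$ gives \[\|W_{1/\psi\circ\varphi^{-1},\varphi^{-1}}f\|_\mu=\sup_{u\in X}\frac{\mu(\varphi(u))}{\mu(u)}\frac1{\modu{\psi(u)}}\mu(u)\modu{f(u)}\le\left(\inf_{u\in X}\frac{\mu(u)}{\mu(\varphi(u))}\modu{\psi(u)}\right)^{-1}\|f\|_\mu,\] so $W_{1/\psi\circ\varphi^{-1},\varphi^{-1}}\colon\Lmuinf(X)\to\Lmuinf(Y)$ is bounded, and testing against $g(u)=\frac1{\mu(u)}$ on $X$ exactly as above shows that its norm equals $\sup_{v\in Y}\frac{\mu(v)}{\mu(\varphi^{-1}(v))}\modu{\frac{1}{\psi(\varphi^{-1}(v))}}$. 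A routine pointwise computation (as noted by Bourdon \cite{Bourdon:2014}) then shows $W_{1/\psi\circ\varphi^{-1},\varphi^{-1}}W_{\psi,\varphi}=I$ on $\Lmuinf(Y)$ and $W_{\psi,\varphi}W_{1/\psi\circ\varphi^{-1},\varphi^{-1}}=I$ on $\Lmuinf(X)$, so $W_{\psi,\varphi}^{-1}=W_{1/\psi\circ\varphi^{-1},\varphi^{-1}}$, and the two stated expressions for $\|W_{\psi,\varphi}^{-1}\|$ agree via the change of variables $v=\varphi(u)$. The one point that deserves care — and the step I would flag — is that the first part of the lemma cannot simply be invoked to evaluate $\|W_{\psi,\varphi}^{-1}\|$, since $\varphi^{-1}$ is defined only on $Y$ and is not a self-map of $T$; the hypothesis that $W_{\psi,\varphi}$ is bounded on $\Lmuinf(T)$ was used only to ensure $\sigma_X<\infty$, so for the inverse one re-runs the short boundedness-and-testing argument directly on $\Lmuinf(X)$.
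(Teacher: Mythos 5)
Your proposal is correct and follows exactly the route the paper intends: the paper offers no written proof, stating only that the lemma ``is derived from Theorems \ref{Theorem:bounded_criteria_Lmuinfty} and \ref{Theorem:InvertibilityTheorem},'' and your argument is precisely that derivation, rerunning the test-function and point-evaluation estimates on the restricted domains and verifying the inverse directly. Your closing remark --- that the first part cannot simply be quoted for $\varphi^{-1}$ since it is not a self-map of $T$ --- is a worthwhile point the paper leaves implicit.
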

	
	Combining the previous lemmas leads to our Fredholm characterization. 
	For the proof here, recall the notation 
	\begin{align*}
		S_w &= \{v \in T: \varphi(v) = \varphi(w)\}\\
		U_\varepsilon &= \left\{v \in T: \frac{\mu(v)}{\mu(\varphi(v))}\modu{\psi(v)} \geq \varepsilon\right\}.
	\end{align*}
	
	\begin{theorem}\label{theorem:WCOFredholm} Let $\psi$ be a function on $T$ and $\varphi$ be a self-map of $T$ for which $W_{\psi,\varphi}$ is bounded on $\Lmuinf$.  Then $W_{\psi,\varphi}$ is Fredholm if and only if 
		\begin{enumerate}
			\item[(a)] $T\setminus\varphi(T)$ is finite,
			\item[(b)] there exists $M \in \N$ such that $\varphi^{-1}(w)$ contains at most $M$ points for every $w \in \varphi(T)$,
			\item[(c)] $\varphi^{-1}(w)$ contains exactly one element for all but a finite number of points $w \in \varphi(T)$,
			\item[(d)] $Z = \{v \in T : \psi(v) = 0\}$ is finite,
			\item[(e)] there is an $\varepsilon>0$ such that $U_{\varepsilon}\cap S_w \neq \emptyset$ for every $w\in Z^c$.
		\end{enumerate}
	\end{theorem}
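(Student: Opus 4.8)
The plan is to prove the two implications separately. The forward implication is essentially bookkeeping: each of (a)--(d) has already been isolated as one of the preparatory lemmas, and (e) follows from the fact that a Fredholm operator has closed range. The reverse implication is the substantive half; for it the plan is to exhibit an explicit bounded parametrix and invoke Atkinson's theorem.

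For the forward direction, suppose $W_{\psi,\varphi}$ is Fredholm on $\Lmuinf$. Then (d) is Lemma~\ref{lemma:psifinitezeros}, (b) is Lemma~\ref{lemma:pullbackbound}, (c) is Lemma~\ref{lemma:singleton}, and (a) is Lemma~\ref{lemma:imagefinite}. For (e), recall that a Fredholm operator has closed range, so Theorem~\ref{theorem:WeightedClosedRange} produces an $\varepsilon>0$ with $U_\varepsilon\cap S_w\neq\emptyset$ for every $w\in Z^c$. I note in passing that (a) already forces $\varphi$ to have infinite range, which is consistent with the earlier observation that finite-range symbols induce compact, hence non-Fredholm, operators.

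For the reverse direction, assume (a)--(e). The first step is to isolate the regular part of $\varphi$. Let $F=\{w\in\varphi(T):\varphi^{-1}(w)\text{ has more than one point}\}$, which is finite by (c), and set $T_0=T\setminus(\varphi^{-1}(F)\cup Z)$. This set is cofinite in $T$, since $\varphi^{-1}(F)$ is finite by (b) and $Z$ is finite by (d); in particular $T_0$ is unbounded. For $v\in T_0$ the fiber $\varphi^{-1}(\varphi(v))$ equals the singleton $S_v=\{v\}$ and $\psi(v)\neq0$, so $\varphi$ restricts to a bijection of $T_0$ onto $T_1:=\varphi(T_0)$; moreover $T\setminus T_1$ is finite, because $\varphi(T)\setminus T_1\subseteq F\cup\varphi(Z)$ while $T\setminus\varphi(T)$ is finite by (a). The key point is that for $v\in T_0$ we have $S_v=\{v\}$ and $v\in Z^c$, so condition (e) forces $v\in U_\varepsilon$, i.e.\ $\frac{\mu(v)}{\mu(\varphi(v))}\modu{\psi(v)}\geq\varepsilon$ throughout $T_0$; hence $\inf_{v\in T_0}\frac{\mu(v)}{\mu(\varphi(v))}\modu{\psi(v)}\geq\varepsilon>0$. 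By Lemma~\ref{lemma:restrictionnorm} the operator $W_{\psi,\varphi}\colon\Lmuinf(T_1)\to\Lmuinf(T_0)$ is then bounded with bounded inverse $W_{1/\psi\circ\varphi^{-1},\varphi^{-1}}$.

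To assemble the parametrix on all of $\Lmuinf(T)$, let $\varphi_0^{-1}\colon T_1\to T_0$ be the inverse of $\varphi|_{T_0}$, extend it to a self-map $\eta$ of $T$ by declaring $\eta\equiv o$ off $T_1$, and define $\tau$ on $T$ by $\tau(v)=1/\psi(\varphi_0^{-1}(v))$ for $v\in T_1$ and $\tau(v)=0$ otherwise. Put $P=W_{\tau,\eta}$; this is the operator that agrees with the inverse above on the cofinite piece and kills the finite exceptional part. The change of variable $u=\varphi_0^{-1}(v)$ gives $\sup_{v\in T}\frac{\mu(v)}{\mu(\eta(v))}\modu{\tau(v)}=\sup_{u\in T_0}\bigl(\frac{\mu(u)}{\mu(\varphi(u))}\modu{\psi(u)}\bigr)^{-1}\leq 1/\varepsilon$, so $P$ is bounded on $\Lmuinf$ by Theorem~\ref{Theorem:bounded_criteria_Lmuinfty}. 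A short computation then shows $(PW_{\psi,\varphi}f)(v)=f(v)$ for every $v\in T_1$ and $(W_{\psi,\varphi}Pf)(v)=f(v)$ for every $v\in T_0$; since $T\setminus T_1$ and $T\setminus T_0$ are finite, the operators $PW_{\psi,\varphi}-I$ and $W_{\psi,\varphi}P-I$ have finite-dimensional range, hence are compact, and Atkinson's theorem yields that $W_{\psi,\varphi}$ is Fredholm. I expect the main obstacle to be the bookkeeping in the reverse direction: verifying that the finite exceptional sets can be removed so that $\varphi|_{T_0}$ is genuinely a bijection onto a cofinite subset and that the uniform lower bound of (e) survives on $T_0$. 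Once that is arranged, the boundedness of $P$ and the finite-rank identities for the two remainders are routine, and the conclusion follows from the cited results.
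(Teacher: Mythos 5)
Your proof is correct and follows essentially the same route as the paper's: the forward direction cites the same preparatory lemmas plus the closed-range theorem, and the reverse direction builds a bounded weighted-composition parametrix that inverts $W_{\psi,\varphi}$ on the cofinite set where $\varphi$ has singleton fibers and $\psi$ is nonvanishing, with condition (e) supplying the uniform lower bound needed for boundedness and Atkinson's theorem finishing the argument. The only difference is cosmetic: you set the parametrix symbols to zero off the regular set (rather than the paper's choices of the identity map and the constant $1$), which if anything makes it slightly more transparent that the two remainders are supported on finite sets and hence finite-rank.
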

	
	\begin{proof}
		First, suppose $W_{\psi,\varphi}$ is Fredholm.  Then properties (a) through (e) follow directly from Lemmas \ref{lemma:imagefinite}, \ref{lemma:pullbackbound}, \ref{lemma:singleton}, \ref{lemma:psifinitezeros}, and Theorem \ref{theorem:WeightedClosedRange} (since Fredholm implies closed range).
		
		Now, suppose conditions (a) through (e) hold.  To prove $W_{\psi,\varphi}$ is Fredholm, we define the following sets 
		\begin{align*}
			\mathcal{S} &= \{w \in \varphi(T) : \modu{\varphi^{-1}(w)} = 1\}\\
			\mathcal{F} &= \{w \in \varphi(T) : 1 < \modu{\varphi^{-1}(w)} \leq M\}.
		\end{align*} Note $\varphi(T) = \mathcal{S} \cup \mathcal{F}$ from condition (b).  Also, by condition (c), $\mathcal{F}$ is a finite set, which we enumerate as $\mathcal{F} = \{w_1, \dots, w_m\}$.  For each $1 \leq i \leq m$, we denote $v_i$ to be a fixed element in the set $\varphi^{-1}(w_i)$.
		
		Next, define the function $\eta:T \to T$ by 
		\[\eta(w) = \begin{cases} \varphi^{-1}(w) & \text{if $w \in S$}\\v_i & \text{if $w \in \mathcal{F}$}\\ w & \text{if $w \not\in \varphi(T)$}\end{cases}\] and the function $\tau:T \to \Co$ by \[\tau(w) = \begin{cases}
			1/\psi(v) & \text{if $v \in Z^c$}\\
			1 & \text{if $v \in Z$}.\\
		\end{cases}\] Note that by properties (a) through (c), the set $\{v \in T : \eta(v) \in Z\}$ is finite by construction since $Z$ is finite.  
		
		We will now show $W_{\tau\circ\eta,\eta}$ to be bounded as an operator on $\Lmuinf$.  First, define $S_1 = \mathcal{S} \cap \varphi(Z^c)$ and $T_1 = \varphi^{-1}(S_1)$.  Then, as a mapping, $\varphi:T_1\to S_1$ is bijective.  Also, by condition (c), $S_1$ is infinite and hence unbounded; it follows then that $T_1$ is also unbounded.  Restricting $\eta$ to $S_1$, we have $\eta =\varphi^{-1}$ and $\tau\circ\eta = 1/\psi\circ\varphi^{-1}$.  By Lemma \ref{lemma:restrictionnorm} and condition (e), $W_{\psi,\varphi}$ has a bounded inverse $W_{1/\psi\circ\varphi^{-1},\varphi^{-1}}$ as an operator from $\Lmuinf(S_1)$ to $\Lmuinf(T_1)$.  But this inverse is $W_{\tau\circ\eta, \eta}$.  As $T\setminus S_1 = \mathcal{S}^c \cup (T\setminus\varphi(T)) \cup \varphi(Z)$ is the union of finite sets by conditions (a), (c), and (d), and thus is a finite set, $W_{\tau\circ\eta,\eta}$ extends to a bounded operator on $\Lmuinf(T)$.
		
		To show $W_{\psi,\varphi}$ is Fredholm, we will show $W_{\psi,\varphi} W_{\tau\circ\eta,\eta} - I$ and $W_{\tau\circ\eta,\eta} W_{\psi,\varphi} - I$ are both compact.  Observe that \[\varphi(\eta(v)) = \begin{cases} v & \text{if $v \in \varphi(T)$}\\\varphi(v) & \text{if $v \not\in \varphi(T)$}.\end{cases}\] and for $f \in \Lmuinf$ and $v \in T$,
		\[(W_{\tau\circ\eta,\eta} W_{\psi,\varphi} f)(v) = \begin{cases}
			f(v) & \text{if $v \in \varphi(T)$ and $\eta(v) \in Z^c$}\\
			f(\varphi(v)) & \text{if $v \notin \varphi(T)$ and $\eta(v) \in Z^c$}\\
			0 & \text{if $\eta(v) \in Z$}.
		\end{cases}\]
		So \[((W_{\tau\circ\eta,\eta} W_{\psi,\varphi} -I)f)(v) = \sum_{\substack{w\notin \varphi(T) \\ \eta(w)\in Z^c}} (f(\varphi(w))-f(w))\bigchi_w(v) - \sum_{\substack{w\in T \\ \eta(w)\in Z}} f(w)\bigchi_w(v).\]  As these are finite sums, the operator $W_{\tau\circ\eta,\eta}W_{\psi,\varphi}  - I$ is finite-rank, and thus compact.  Likewise, \[\eta(\varphi(v)) = \begin{cases} v & \text{if $\varphi(v) \in \mathcal{S}$}\\v_i & \text{if $\varphi(v) \in \mathcal{F}$}\end{cases}\] and for $f \in \Lmuinf$ and $v \in T$
		\[(W_{\psi,\varphi} W_{\tau\circ\eta,\eta} f)(v) = \begin{cases}
			f(v) & \text{if $\varphi(v) \in S \cap \varphi(Z^c)$}\\
			\psi(v)\tau(v_i)f(v_i) & \text{if $\varphi(v) \in F \cap \varphi(Z^c)$}\\
			0 & \text{if $\varphi(v) \in \varphi(Z)$}.
		\end{cases}\]
		So $W_{\psi,\varphi}W_{\tau\circ\eta,\eta} - I$ is compact since
		\[\begin{aligned}&((W_{\psi,\varphi} W_{\tau\circ\eta,\eta} -I)f)(v)=\\ 
			&\qquad\sum_{\substack{w\in T \\ \varphi(w)\in \mathcal{F}\cap\varphi(Z^c)}} (\psi(w)\tau(v_i)f(v_i)-f(w))\bigchi_{w}(v) - \sum_{\substack{w\in T \\ \varphi(w)\in\varphi(Z)}} f(w)\bigchi_w(v).\end{aligned}\]  Therefore $W_{\psi,\varphi}$ is Fredholm.
	\end{proof}
	
	In the case when $\varphi$ is a bijection on $T$, we see that the Fredholm condition is almost the same as that for bounded below.
	
	\begin{corollary} Let $\psi$ be a function on $T$ and $\varphi$ be a bijective self-map of $T$ and assume $W_{\psi,\varphi}$ is bounded on $\Lmuinf$.  Then $W_{\psi,\varphi}$ is Fredholm if and only if $W_{\psi,\varphi}$ is bounded below and $\psi$ has finitely many zeros.
	\end{corollary}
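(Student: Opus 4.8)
The plan is to deduce the corollary from the Fredholm characterization in Theorem~\ref{theorem:WCOFredholm}, exploiting that a bijective $\varphi$ automatically satisfies three of its five conditions. Since $\varphi$ is surjective, $T\setminus\varphi(T)=\emptyset$ is finite, so (a) holds; since $\varphi$ is injective, $\varphi^{-1}(w)$ is a singleton for every $w\in\varphi(T)=T$, so (b) holds with $M=1$ and (c) holds vacuously. Moreover $S_w=\varphi^{-1}(\varphi(w))=\{w\}$ for every $w\in T$, so condition (e) becomes the existence of an $\varepsilon>0$ with $\frac{\mu(w)}{\mu(\varphi(w))}\modu{\psi(w)}\geq\varepsilon$ for all $w\in Z^c$. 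Hence, when $\varphi$ is bijective, Theorem~\ref{theorem:WCOFredholm} says precisely that $W_{\psi,\varphi}$ is Fredholm if and only if (d) $Z$ is finite and (e) this $\varepsilon$-separation holds off the zero set of $\psi$.

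It then remains to identify these two surviving conditions. Condition (d) is, verbatim, the assertion that $\psi$ has finitely many zeros. For condition (e), since $\varphi$ is bijective and $S_w=\{w\}$, I would match it against Theorem~\ref{theorem:boundedbelowWCO}: for such $\varphi$ the bounded-below criterion reduces to the existence of an $\varepsilon>0$ with $\frac{\mu(v)}{\mu(\varphi(v))}\modu{\psi(v)}\geq\varepsilon$ for \emph{all} $v\in T$. The ``if'' direction is then immediate: a bounded-below $W_{\psi,\varphi}$ satisfies the $\varepsilon$-separation on all of $T$, so in particular on $Z^c$, which is (e), and ``$\psi$ has finitely many zeros'' is (d); conditions (a)--(c) being free, Theorem~\ref{theorem:WCOFredholm} makes $W_{\psi,\varphi}$ Fredholm. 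One can also argue more directly: by Theorem~\ref{Theorem:InvertibilityTheorem}, a bijective $\varphi$ together with the bounded-below condition already forces $W_{\psi,\varphi}$ to be invertible, and an invertible operator is Fredholm.

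For the converse, assuming $W_{\psi,\varphi}$ Fredholm, Theorem~\ref{theorem:WCOFredholm} delivers (d) and (e); thus $\psi$ has finitely many zeros and the $\varepsilon$-separation holds on $Z^c$, while $\varphi$ is surjective with $S_w=\{w\}$. What must still be supplied is the passage from the estimate on $Z^c$ to the estimate that Theorem~\ref{theorem:boundedbelowWCO} demands for bounded belowness — that is, reconciling the quantifier range in (e) with the one in the bounded-below criterion, using the finiteness of $Z$ from (d) and the fact that the points of $Z$ are absorbed into the finite-dimensional kernel (and $\ker(\adWCO)$) of $W_{\psi,\varphi}$. I expect this reconciliation to be the only step carrying real content; once it is settled, combining the two equivalences gives the corollary. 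Note that for non-vanishing $\psi$ conditions (d) and (e) collapse exactly to the bounded-below criterion of Theorem~\ref{theorem:boundedbelowWCO}, which is the sense in which, for bijective $\varphi$, the Fredholm condition is ``almost the same'' as being bounded below.
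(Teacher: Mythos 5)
Your handling of conditions (a)--(c) of Theorem~\ref{theorem:WCOFredholm} for bijective $\varphi$ is correct, and your ``if'' direction is sound (indeed cleanest via your second route: bounded below plus bijectivity gives the infimum condition of Theorem~\ref{Theorem:InvertibilityTheorem}, hence invertibility, hence Fredholmness). The gap is in the converse, and it is not one you can close: the ``reconciliation'' you defer --- passing from the $\varepsilon$-separation on $Z^c$ in condition (e) to the separation on all of $T$ that Theorem~\ref{theorem:boundedbelowWCO} demands --- is genuinely impossible whenever $Z\neq\emptyset$. A bounded below operator is injective, but for bijective $\varphi$ any zero $w_0$ of $\psi$ puts the nonzero function $\bigchi_{\varphi(w_0)}$ into $\ker(W_{\psi,\varphi})$: the finite-dimensional kernel is not ``absorbed'', it is precisely the obstruction to bounded-belowness. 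Concretely, take $\varphi$ the identity on $T$, $\mu\equiv 1$, and $\psi=1-\bigchi_o$. Then $Z=\{o\}$ is finite and $\frac{\mu(v)}{\mu(\varphi(v))}\modu{\psi(v)}=1$ on $Z^c$, so $W_{\psi,\varphi}=M_\psi$ satisfies (a)--(e) and is Fredholm, yet it annihilates $\bigchi_o$ and so is not bounded below.

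What this reveals is that, read literally, the two sides of the claimed equivalence do not match: for bijective $\varphi$ the bounded-below criterion of Theorem~\ref{theorem:boundedbelowWCO} forces $S_w=\{w\}$ to meet $U_\varepsilon$ for \emph{every} $w$, hence forces $\psi$ to be zero-free, so ``bounded below and $\psi$ has finitely many zeros'' collapses to ``$W_{\psi,\varphi}$ is invertible'' --- and Fredholm operators need not be invertible. The equivalence you are after does hold verbatim with ``closed range'' in place of ``bounded below'': Theorem~\ref{theorem:WeightedClosedRange} characterizes closed range by exactly the separation on $Z^c$, which for bijective $\varphi$ is condition (e), and with that substitution your matching of conditions (d) and (e) finishes the proof with no residue. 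If you insist on ``bounded below'', you must either add the hypothesis that $\psi$ is nonvanishing or work with the induced injective operator on the quotient by $\ker(W_{\psi,\varphi})$, as in the discussion preceding Theorem~\ref{theorem:WeightedClosedRange}.
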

	
	For composition operators we have the following.  Recall the notation \[V_\varepsilon = \left\{v \in T : \frac{\mu(v)}{\mu(\varphi(v))} \geq \varepsilon\right\}.\]
	
	\begin{corollary}\label{theorem:compositionFredholm} Let $\varphi$ be a self-map of $T$ for which $C_\varphi$ is bounded on $\Lmuinf$.  Then $C_\varphi$ is Fredholm if and only if 
		\begin{enumerate}
			\item[(a)] $T\setminus\varphi(T)$ is finite,
			\item[(b)] there exists $M \in \N$ such that $\varphi^{-1}(w)$ contains at most $N$ points for every $w \in \varphi(T)$,
			\item[(c)] $\varphi^{-1}(w)$ contains exactly one element for all but a finite number of points $w \in \varphi(T)$,
			\item[(d)] there is an $\varepsilon>0$ such that $V_{\varepsilon}\cap S_w \neq \emptyset$ for every $w\in T$.
		\end{enumerate}
	\end{corollary}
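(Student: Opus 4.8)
The plan is to obtain this as a direct specialization of Theorem \ref{theorem:WCOFredholm} to the case $\psi \equiv 1$, since $C_\varphi = W_{1,\varphi}$ and, by hypothesis, this operator is bounded on $\Lmuinf$ so that the theorem applies. First I would record that with the constant symbol $\psi \equiv 1$ the zero set $Z = \psi^{-1}(0)$ is empty, hence trivially finite; this makes condition (d) of Theorem \ref{theorem:WCOFredholm} automatically satisfied, and since $Z^c = T$, it rewrites condition (e) of that theorem as: there is an $\varepsilon > 0$ such that $U_\varepsilon \cap S_w \neq \emptyset$ for every $w \in T$.

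Next I would observe that when $\psi \equiv 1$ the quantity defining $U_\varepsilon$ collapses, namely $\frac{\mu(v)}{\mu(\varphi(v))}\modu{\psi(v)} = \frac{\mu(v)}{\mu(\varphi(v))}$, so that $U_\varepsilon = V_\varepsilon$ exactly. Consequently the rewritten condition (e) of Theorem \ref{theorem:WCOFredholm} is verbatim condition (d) of the corollary. The remaining hypotheses (a), (b), (c) of Theorem \ref{theorem:WCOFredholm} refer only to the self-map $\varphi$ and transfer unchanged to (a), (b), (c) here. Thus the biconditional of Theorem \ref{theorem:WCOFredholm} — that $W_{1,\varphi}$ is Fredholm iff (a)–(e) hold — becomes, after discarding the now-vacuous (d) and identifying (e) with $V_\varepsilon$, precisely the stated equivalence.

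There is essentially no real obstacle: the only point requiring a moment's care is confirming that it is legitimate to drop condition (d) and to replace $Z^c$ by $T$ in condition (e), which is immediate once $Z = \emptyset$ is noted. If one preferred a self-contained argument, the necessity of (a)–(c) can be re-derived directly from Lemmas \ref{lemma:imagefinite}, \ref{lemma:pullbackbound}, and \ref{lemma:singleton} (none of which mention $\psi$), and the sufficiency from the Atkinson-type parametrix construction in the proof of Theorem \ref{theorem:WCOFredholm} with $\psi \equiv 1$ and $\tau \equiv 1$; but invoking the theorem wholesale is cleaner and is the route I would take.
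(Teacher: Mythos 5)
Your proposal is correct and matches the paper's intent exactly: the corollary is stated as an immediate specialization of Theorem \ref{theorem:WCOFredholm} to $\psi \equiv 1$, where $Z = \emptyset$ makes condition (d) vacuous and turns $U_\varepsilon$ into $V_\varepsilon$ with $Z^c = T$. Nothing further is needed.
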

	
	In addition to the characterization above, for composition operators  we have an interesting sufficient condition for an operator to be Fredholm.
	
	\begin{proposition}\label{proposition:SufficientforFredholmComposition}
		Let $\varphi$ be self-map of $T$ for which $C_\varphi$ is bounded on $\Lmuinf$.  If there is a self-map $\eta$ of $T$ such that $\varphi(v)=\eta(v)$ except for a finite number of points in $T$ and $C_{\eta}$ is invertible (with bounded inverse), then $C_\varphi$ is Fredholm.
	\end{proposition}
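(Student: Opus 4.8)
The plan is to show that $C_\varphi$ differs from the invertible operator $C_\eta$ by a finite-rank operator, and then to invoke Atkinson's theorem. Since $C_\eta$ is invertible, it is in particular a bounded operator on $\Lmuinf$, and as $C_\varphi$ is bounded by hypothesis, the difference $K := C_\varphi - C_\eta$ is a bounded operator on $\Lmuinf$. Let $E = \{v \in T : \varphi(v) \neq \eta(v)\}$, which is finite by hypothesis. For $f \in \Lmuinf$ and $v \in T$ we have $((C_\varphi - C_\eta)f)(v) = f(\varphi(v)) - f(\eta(v))$, and this vanishes for every $v \notin E$; hence
\[ Kf = \sum_{w \in E} \bigl(f(\varphi(w)) - f(\eta(w))\bigr)\,\bigchi_w . \]
Each $\bigchi_w$ lies in $\lilLmuinf \subseteq \Lmuinf$ by Lemma \ref{Lemma:1/mu_in_lilLmuinf}, so the range of $K$ is contained in the finite-dimensional subspace of $\Lmuinf$ spanned by $\{\bigchi_w : w \in E\}$, independently of $f$. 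Therefore $K$ is finite-rank, and in particular compact.

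With $C_\varphi = C_\eta + K$ in hand, I would take $S = C_\eta^{-1}$, which is bounded by hypothesis, and compute
\[ S C_\varphi - I = C_\eta^{-1}(C_\eta + K) - I = C_\eta^{-1} K, \qquad C_\varphi S - I = (C_\eta + K) C_\eta^{-1} - I = K C_\eta^{-1}. \]
Both of these are compact, since the compact operators form a two-sided ideal in the bounded operators on $\Lmuinf$. By Atkinson's theorem (recalled at the beginning of Section \ref{Section:Fredholm}), $C_\varphi$ is Fredholm, as desired.

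There is no serious obstacle here; the substance of the argument is the observation that $K = C_\varphi - C_\eta$ has range inside a fixed finite-dimensional subspace regardless of the input $f$, which is what upgrades ``the symbols agree off a finite set'' all the way to ``finite-rank,'' rather than merely to some weaker smallness. The one hypothesis to use carefully is that $C_\eta$ being invertible includes its being a bounded operator, which is what makes $K$ a genuine bounded (hence finite-rank) operator. As an alternative, one could instead verify conditions (a)--(d) of Corollary \ref{theorem:compositionFredholm} directly: (a)--(c) would transfer from the corresponding structural properties of the bijection $\eta$ (which follow since $C_\eta$ is invertible, via Corollary \ref{Corollary:compositioninvertible}) together with the fact that $\varphi$ and $\eta$ differ on only finitely many points, and (d) would come from Corollary \ref{Corollary:compositionoperatorboundedbelow} applied to $\eta$ and the same finiteness; but the operator-theoretic route above is shorter and cleaner.
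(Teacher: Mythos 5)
Your proof is correct and follows essentially the same route as the paper: both arguments produce the parametrix $C_\eta^{-1}$ and apply Atkinson's theorem, with the compactness of the remainders coming from the fact that the symbols disagree only on a finite set. The only (cosmetic) difference is that you factor out $K = C_\varphi - C_\eta$ as a single finite-rank operator and invoke the ideal property of the compacts, whereas the paper computes $C_\varphi C_{\eta^{-1}} - I$ and $C_{\eta^{-1}}C_\varphi - I$ directly and exhibits each as finite-rank.
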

	
	\begin{proof}
		First observe that $C_\eta$ is bounded on $\Lmuinf$ since $C_\varphi$ is bounded. The fact that $C_\eta$ is invertible implies $\eta$ is surjective by Theorem \ref{lemma:injectiveoperators}.  Furthermore, $C_\eta^*$ is also invertible.  If $v,w\in T$ with $\eta(v)=\eta(w)$, it must be the case that $K_{\eta(v)}=K_{\eta(w)}$ or $C_\eta^*K_v=C_\eta^*K_w$ and thus $K_v=K_w$ or $v=w$.  Thus $\eta$ is injective and hence invertible.  It is easy to check that $C_\eta^{-1}=C_{\eta^{-1}}$.
		
		Let $R=\{v_1,v_2,\ldots,v_n\}$ be the set of points where $\varphi$ and $\eta$ do not agree. We now claim that $\textup{Im}(C_\varphi C_\eta^{-1}-I)$ and $\textup{Im}(C_\eta^{-1}C_\varphi-I)$ are both finite dimensional. This in turn implies that both operators are finite rank and hence compact. We verify these claims from which the conclusion is apparent.  
		
		For $C_\varphi C_\eta^{-1}-I=C_\varphi C_{\eta^{-1}}-I$ and $v\in T$, we have $((C_\varphi C_{\eta^{-1}}-I)f)(v)=f(\eta^{-1}(\varphi(v)))-f(v)$, which will be zero for $v\not\in R$.  Thus we can write \[((C_\varphi C_{\eta^{-1}}-I)f)(v)=\sum_{v_i\in R} (f(\eta^{-1}(\varphi(v_i)))-f(v_i))\bigchi_{v_i}(v).\]  Thus $\textup{Im}(C_\varphi C_\eta^{-1}-I)\subseteq \left\{\sum c_i\bigchi_{v_i}: c_i\in \Co, v_i\in R\right\}$, which is finite dimensional.
		
		Similarly, $((C_{\eta^{-1}}C_\varphi-I)f)(v)=f(\varphi(\eta^{-1}(v)))-f(v)$, which will be zero if $\eta^{-1}(v)\not\in R$. If $\eta^{-1}(v)\in R$, then $\eta^{-1}(v)=v_i$ for some $i$ and thus, in this case, we have \[((C_{\eta^{-1}}C_\varphi-I)f)(v)=\sum_{v_i\in R} (f(\varphi(v_i))-f(\eta(v_i)))\bigchi_{\eta(v_i)}(v).\]  We conclude $\textup{Im}(C_\eta^{-1}C_\varphi-I)\subseteq\left\{\sum c_i\bigchi_{\eta(v_i)}: c_i\in \Co, v_i\in R\right\}$.
	\end{proof}
	
	For multiplication operators, we immediately see the following.
	
	\begin{corollary}\label{theorem:multiplicationFredholm}
		Let $\psi$ be a function on $T$ for which $M_\psi$ is bounded on $\Lmuinf$.  The following are equivalent:
		
		\begin{enumerate}
			\item $M_\psi$ is Fredholm,
			\item $Z=\{v\in T:\psi(v)=0\}$ is finite and $\inf_{v\in Z^c}\modu{\psi(v)}>0$,
			\item there exists $\varepsilon>0$ and $N\in \N$ such that $\modu{\psi(v)}\geq\varepsilon$ for all $v\in T$ with $\modu{v}>N$.
		\end{enumerate} 
	\end{corollary}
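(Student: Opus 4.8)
The plan is to obtain this statement as a specialization of Theorem \ref{theorem:WCOFredholm} to the case $\varphi$ equal to the identity self-map of $T$, followed by a short reconciliation of statements (2) and (3) using that $T$ is locally finite.

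First I would take $\varphi$ to be the identity map on $T$, so that $M_\psi = W_{\psi,\varphi}$. With this choice $\varphi(T) = T$, so $T\setminus\varphi(T) = \emptyset$ and condition (a) of Theorem \ref{theorem:WCOFredholm} holds; $\varphi^{-1}(w) = \{w\}$ for every $w \in T$, so conditions (b) and (c) hold (with $M = 1$); and $S_w = \varphi^{-1}(\varphi(w)) = \{w\}$, while $\frac{\mu(v)}{\mu(\varphi(v))} = 1$ so that $U_\varepsilon = \{v \in T : \modu{\psi(v)} \geq \varepsilon\}$. Consequently condition (d) is exactly ``$Z$ is finite'' and condition (e) --- that there is $\varepsilon > 0$ with $w \in U_\varepsilon$ for all $w \in Z^c$ --- is exactly $\inf_{v\in Z^c}\modu{\psi(v)} > 0$. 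Thus Theorem \ref{theorem:WCOFredholm} gives (1) $\Leftrightarrow$ (2) at once. One could instead note that $\ker M_\psi$ and $\ker M_\psi^*$ are finite dimensional precisely when $Z$ is finite and combine this with the closed-range criterion of Corollary \ref{Corollary:multiplicationclosedrange}; the conclusion is the same.

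Next I would prove (2) $\Leftrightarrow$ (3). For (2) $\Rightarrow$ (3): since $T$ is locally finite, the finite set $Z$ is bounded, so there is $N \in \N$ with $Z \subseteq \{v \in T : \modu{v} \leq N\}$; taking $\varepsilon = \inf_{v\in Z^c}\modu{\psi(v)} > 0$, every $v$ with $\modu{v} > N$ lies in $Z^c$ and hence satisfies $\modu{\psi(v)} \geq \varepsilon$. For (3) $\Rightarrow$ (2): the hypothesis forces $Z \subseteq \{v \in T : \modu{v} \leq N\}$, which is finite by local finiteness, so $Z$ is finite; moreover $Z^c \cap \{v \in T : \modu{v} \leq N\}$ is a finite set on which $\modu{\psi}$ is strictly positive, so its infimum over that finite set is positive, and combining with $\modu{\psi(v)} \geq \varepsilon$ for $\modu{v} > N$ yields $\inf_{v\in Z^c}\modu{\psi(v)} > 0$.

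There is no serious obstacle here: essentially all the content is carried by Theorem \ref{theorem:WCOFredholm}, and the only points needing care are the two uses of local finiteness (a finite set is bounded, a bounded set is finite) to pass between ``$Z$ finite together with a uniform lower bound off $Z$'' and ``a uniform lower bound on the tail of $\modu{\psi}$'', together with the routine bookkeeping of the finitely many vertices of small length in the (3) $\Rightarrow$ (2) direction.
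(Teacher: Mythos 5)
Your proposal is correct and matches the paper's approach: the paper presents this as an immediate corollary of Theorem \ref{theorem:WCOFredholm} (specializing $\varphi$ to the identity, which collapses conditions (a)--(e) to condition (ii)) and remarks that (iii) is easily shown to be equivalent to (ii), which is exactly the local-finiteness bookkeeping you supply.
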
 
	\noindent Note condition (ii) is derived from Theorem \ref{theorem:WCOFredholm}.  In addition, condition (iii), while easily shown to be equivalent to (ii), is a simpler condition to use for actually determining if $M_\psi$ is Fredholm on $\Lmuinf$.
	
	\section{Examples}\label{Section:Examples}
	In this section, we construct several examples that illustrate the richness of the operator theory as well as several key features of the results throughout this paper.  Specifically, when not indicated, $T$ will be an infinite tree with root $o$, where $\mathrm{d}$ is the edge-counting metric on $T$, as defined in any of \cite{AllenColonnaEasley:14,AllenCraig:2015,AllenPons:2016,ColonnaEasley:10}. 
	For the first example, we construct a weighted composition operator that is bounded on $\Lmuinf$ but not bounded on $\lilLmuinf$, thus showing the converse of Lemma \ref{Lemma:Bounded_inf_0} does not hold.  
	
	\begin{example} Let $\mu$ be a weight function and $\varphi$ a self-map of $T$ with finite range.  Define $\psi(v) = 1/\mu(v)$ for all $v \in T$, $m = \min_{w \in \varphi(T)} \psi(w)$, and $M = \max_{w \in \varphi(T)} \psi(w)$.  Note $0 < m \leq M < \infty$ and  \[\frac{\mu(v)}{\mu(\varphi(v))}\modu{\psi(v)} = \psi(\varphi(v))\] for all $v \in T$.  Thus $\sigma_{\psi,\varphi} = M$. So $W_{\psi,\varphi}$ is bounded on $\Lmuinf$ by Theorem \ref{Theorem:bounded_criteria_Lmuinfty}.  However, since $\psi(\varphi(v))$ is bounded away from zero on $T$, it follows that $\xi_{\psi,\varphi}$ can not equal 0.  Thus $W_{\psi,\varphi}$ is not bounded on $\lilLmuinf$ by Theorem \ref{Theorem:bounded_criteria_Lmu0_elliptic}.
	\end{example}
	
	In the next three examples, we construct bounded and compact weighted composition operators on $\Lmuinf$ for which the induced multiplication or composition operators are not bounded or compact.
	
	\begin{example} In this example, we construct a bounded weighted composition operators on $\Lmuinf$ for which the induced composition operator is bounded but the induced multiplication operator is unbounded.  We specifically provide separate examples for which $\mu$ is typical and atypical.
		\begin{enumerate}
			\item Define \[\mu(v) = \begin{cases}1/\modu{v} & \text{if $v \neq o$}\\1 & \text{if $v = o$},\end{cases}\] $\psi(v) = 1/\mu(v)$ and $\varphi(v) = o$ for all $v \in T$.  Since $\psi$ is not a bounded function on $T$, $M_\psi$ is not bounded on $\Lmuinf$ \cite[Theorem 3.1]{AllenCraig:2015}. Additionally, $C_\varphi$ is bounded (it is in fact compact) \cite[Theorem 3.1]{AllenPons:2016} since \[\frac{\mu(v)}{\mu(\varphi(v))} = \frac{1}{\modu{v}}\] for all $v \in T^*$.  However, $W_{\psi,\varphi}$ is bounded on $\Lmuinf$ since \[\frac{\mu(v)}{\mu(\varphi(v))}\modu{\psi(v)} = 1\]  for all $v \in T$.
			\item Define \[\mu(v) = \begin{cases}\modu{v} & \text{if $v \neq o$}\\1 & \text{if $v = o$},\end{cases}\] and $\psi(v) = \mu(v)$ for all $v \in T$.  Let $(w_n)$ be a sequence in $T$ for which $\modu{w_n} = n^2$ for all $n \in \N\cup\{0\}$ and define $\varphi(v) = w_{\modu{v}}$ for all $v \in T$.  For the same reasoning as in (i), $M_\psi$ is not bounded and $C_\varphi$ is bounded on $\Lmuinf$, and $W_{\psi,\varphi}$ is bounded on $\Lmuinf$.
		\end{enumerate}
	\end{example}
	
	\begin{example} In this example, we construct a compact weighted composition operator on $\Lmuinf$ for which the induced multiplication operator is compact but the induced composition operator is not bounded.  We provide specific examples for which $\mu$ is typical and atypical.
		
		\begin{enumerate}
			\item Define \[\mu(v) = \begin{cases}1/\modu{v} & \text{if $v \neq o$}\\1 & \text{if $v = o$},\end{cases}\] and $\psi(v) = \mu^3(v)$ for all $v \in T$.  Let $(w_n)$ be a sequence in $T$ for which $\modu{w_n} = n^2$ for all $n \in \N\cup\{0\}$ and define $\varphi(v) = w_{\modu{v}}$ for all $v \in T$.  Observe $M_\psi$ is compact on $\Lmuinf$ \cite[Theorem 3.2]{AllenCraig:2015} since $\mu^3(v) \to 0$ as $\modu{v} \to \infty$.  Also note $C_\varphi$ is not bounded on $\Lmuinf$ since \[\frac{\mu(v)}{\mu(\varphi(v))} = \modu{v} \to \infty\] as $\modu{v} \to \infty$. However, $W_{\psi,\varphi}$ is compact on $\Lmuinf$ by Corollary \ref{Corollary:Compactness} since \[\sup_{\modu{\varphi(v)}\geq N}\;\frac{\mu(v)}{\mu(\varphi(v))}\modu{\psi(v)} = \frac{1}{N}.\]
			
			\item Define \[\mu(v) = \begin{cases}\modu{v}^2 & \text{if $v \neq o$}\\1 & \text{if $v = o$},\end{cases}\] and $\psi(v) = 1/\mu(v)$ for all $v \in T$.  Let $(w_n)$ be a sequence in $T$ for which $\modu{w_n} = n$ for all $n \in \N\cup\{0\}$ and define $\varphi(v) = w_{\modu{v}}$ for all $v \in T$.  For the same reasoning as in (i), $M_\psi$ is compact and $C_\varphi$ is not bounded on $\Lmuinf$, and $W_{\psi,\varphi}$ is compact on $\Lmuinf$.
		\end{enumerate}
	\end{example}
	
	\begin{example}In this example, we construct a compact weighted composition operator on $\Lmuinf$ for which neither induced multiplication or composition operators is compact on $\Lmuinf$.  We provide examples for which $\mu$ is both typical and atypical.
		
		\begin{enumerate}
			\item Define \[\mu(v) = \begin{cases}1/\modu{v} & \text{if $v \neq o$}\\1 & \text{if $v = o$},\end{cases}\] and 
			\[\psi(v) = \begin{cases}1 & \text{if $\modu{v}$ is odd}\\1/(\modu{v}+1) & \text{if $\modu{v}$ is even}.\end{cases}\]
			Let $(w_n)$ be a sequence in $T$ for which $\modu{w_n} = \lfloor \sqrt{n}\rfloor$ for all $n \in \N$ and define
			\[\varphi(v) = \begin{cases}w_{\modu{v}} & \text{if $\modu{v}$ is odd}\\v & \text{if $\modu{v}$ is even}.\end{cases}\]
			Observe $M_\psi$ and $C_\varphi$ are both bounded on $\Lmuinf$, but neither is compact.  However, $W_{\psi,\varphi}$ is compact on $\Lmuinf$ since
			\[\sup_{\modu{\varphi(v)}\geq N}\;\frac{\mu(v)}{\mu(\varphi(v))}\modu{\psi(v)} \leq \frac{1}{N}.\]
			
			\item Define \[\mu(v) = \begin{cases}\modu{v} & \text{if $v \neq o$}\\1 & \text{if $v = o$},\end{cases}\] and 
			\[\psi(v) = \begin{cases}1 & \text{if $\modu{v}$ is odd}\\1/(\modu{v}+1) & \text{if $\modu{v}$ is even}.\end{cases}\]
			Let $(w_n)$ be a sequence in $T$ for which $\modu{w_n} = n(n+1)$ for all $n \in \N$ and define
			\[\varphi(v) = \begin{cases}w_{\modu{v}} & \text{if $\modu{v}$ is odd}\\v & \text{if $\modu{v}$ is even}.\end{cases}\]
			Observe $M_\psi$ and $C_\varphi$ are both bounded on $\Lmuinf$, but neither is compact.  However, $W_{\psi,\varphi}$ is compact on $\Lmuinf$ since
			\[\sup_{\modu{\varphi(v)}\geq N}\;\frac{\mu(v)}{\mu(\varphi(v))}\modu{\psi(v)} = \frac{1}{N+1}.\]
		\end{enumerate}
	\end{example}
	
	In the next example, we see that composition operators on $\Lmuinf$ induced by bijections do not necessarily have inverses that are bounded in $\Lmuinf$.
	
	\begin{example} 
		In this example, we take $T$ to be $\Zed$ with root 0.  Define the weight $\mu$ and $\varphi$ on $T$ by \[\mu(v) = \begin{cases}
			1 & \text{if $v \geq 0$}\\
			|v| & \text{if $v < 0$}
		\end{cases}\] and
		\[\varphi(v) = \begin{cases}
			0 & \text{if $v = 0$}\\
			(v+1)/2 & \text{if $v > 0$ and odd}\\
			-v & \text{if $v > 0$ and even}\\
			2v+1 & \text{if $v < 0$}.
		\end{cases}\] By direct calculation,
		\[\frac{\mu(v)}{\mu(\varphi(v))} = \begin{cases}
			1 & \text{if $v = 0$, or $v > 0$ and odd}\\
			1/|v| & \text{if $v > 0$ and even}\\
			|v|/(2|v|+1) & \text{if $v < 0$}.
		\end{cases}\] As $\frac{\mu(v)}{\mu(\varphi(v))} \leq 1$ for all $v \in T$, $C_\varphi$ is bounded.  Note $\varphi$ is a bijection, and thus $C_{\varphi^{-1}}$ is well defined as an operator on $\Lmuinf$.  However, we see that $C_{\varphi^{-1}}$ is not bounded on $\Lmuinf$ because, if it were the case, then $\sup_{v \in T} \frac{\mu(v)}{\mu(\varphi^{-1}(v))}$ would be finite, which is equivalent to $\sup_{v \in T} \frac{\mu(\varphi(v))}{\mu(v)}$ being finite.  However, this is not true since for the sequence $v_n = 2n$ in $T$, we have  \[\lim_{n \to \infty} \frac{\mu(\varphi(v_n))}{\mu(v_n)} = \lim_{n \to \infty} |v_n| = \lim_{n \to \infty} 2n = \infty.\]
	\end{example}
	
	Fredholm composition and weighted composition operators acting on classical spaces of analytic functions over the unit disk typically arise from automorphic symbols; in other words, in most of the cases where Fredholm composition operators have been characterized, they are in fact invertible (see \cite{MacCluer:1997} and \cite{LoLoh:2019}).  Here we give a simple example to show this is not the case for our spaces. The example also illustrates Proposition \ref{proposition:SufficientforFredholmComposition}.
	
	\begin{example}
		Let $\mu$ be a weight on $T$.  For a fixed $w \in T^*$ define \[\varphi(v) = \begin{cases}v & \text{if $v \neq o$}\\w & \text{if $v = o$}.\end{cases}\]  Also take $\eta(v)=v$ for all $v \in T$.  It follows that both $C_\varphi$ and $C_\eta$ are bounded on $\Lmuinf$ and $C_\varphi C_\eta-I=C_\eta C_\varphi-I=C_\varphi-I.$  Moreover, $((C_\varphi-I)f)(v)=0$ if $v\neq o$ and $((C_\varphi-I)f)(o)=f(v_1)-f(o)$, which means $\textup{Im}(C_\varphi-I)=\{c\bigchi_o(v):c\in \Co\}$.  Thus $C_\varphi-I$ has finite rank and is compact.  Hence $C_\varphi$ is Fredholm.
	\end{example}
	
	In the final example, we construct a surjective isometric weighted composition operator (and thus Fredholm) on $\Lmuinf$ whose composition component is not bounded.
	
	\begin{example}
		For this example, we take $T$ to be $\Zed[i]$, the points in $\Co$ with integer real and imaginary parts, with root $o=0$.  In $T$ we define the quadrants as follows:
		\begin{align*}
			\mathrm{I} &= \left\{re^{i\theta} \in \Zed[i] : r > 0,\;0 \leq \theta < \pi/2\right\},\\
			\mathrm{II} &= \left\{re^{i\theta} \in \Zed[i] : r > 0,\;\pi/2 \leq \theta < \pi\right\},\\
			\mathrm{III} &= \left\{re^{i\theta} \in \Zed[i] : r > 0,\;\pi \leq \theta < 3\pi/2\right\},\\
			\mathrm{IV} &= \left\{re^{i\theta} \in \Zed[i] : r > 0,\;3\pi/2 \leq \theta < 2\pi\right\}.
		\end{align*}  Thus, $T = \{0\} \cup \mathrm{I} \cup \mathrm{II} \cup \mathrm{III} \cup \mathrm{IV}$.  On $T$, define the weight $\mu$ by \[\mu(v) = \begin{cases}1 & \text{if $v \in \mathrm{I}\cup\{0\}$}\\\modu{v} & \text{if $v \in \mathrm{II}$ or $\mathrm{IV}$}\\\modu{v}^2 & \text{if $v \in \mathrm{III}$}.\end{cases}\]   Define $\varphi:T \to T$ to be rotation by $\pi/2$, i.e. $\varphi(v) = e^{i\pi/2}v$ for all $v \in T$.  Thus $\varphi$ is a bijection with the root as the only fixed point.  We see that $C_\varphi$ is not bounded on $\Lmuinf$ since for any sequence $(v_n)$ in $\mathrm{III}$ with $\modu{v_n} \to \infty$ as $n \to \infty$, we have \[\lim_{n \to \infty}\frac{\mu(v_n)}{\mu(\varphi(v_n))} = \lim_{\modu{v} \to \infty}\frac{\modu{v}^2}{\modu{v}} = \infty.\]
		Finally, define $\psi:T \to \Co$ by \[\psi(v) = \begin{cases}\modu{v} & \text{if $v \in \mathrm{I}$ or $\mathrm{II}$}\\1/\modu{v} & \text{if $v \in \mathrm{III}$ or $\mathrm{IV}$}\\1 & \text{if $v = 0$}.\end{cases}\] First, observe $W_{\psi,\varphi}$ is bounded on $\Lmuinf$ since  
		\[\frac{\mu(v)}{\mu(\varphi(v))}\modu{\psi(v)} = 1\] for all $v \in T$.  By Theorem \ref{Theorem:WCOIso}, $W_{\psi,\varphi}$ is a surjective isometry.  So $W_{\psi,\varphi}$ is invertible, and thus Fredholm.
	\end{example}
	
	\section*{Acknowledgements}
	The authors would like to thank Ruben Mart\'inez-Avenda\~no of the Instituto Tecnol\'ogico Aut\'onomo de M\'exico for the idea to generalize the results to unbounded, locally finite metric spaces.  
	
	\bibliographystyle{amsplain} 
	\bibliography{references}

\providecommand{\bysame}{\leavevmode\hbox to3em{\hrulefill}\thinspace}
\providecommand{\MR}{\relax\ifhmode\unskip\space\fi MR }
% \MRhref is called by the amsart/book/proc definition of \MR.
\providecommand{\MRhref}[2]{%
  \href{http://www.ams.org/mathscinet-getitem?mr=#1}{#2}
}
\providecommand{\href}[2]{#2}
\begin{thebibliography}{10}

\bibitem{AllenColonnaEasley:14}
Robert~F. Allen, Flavia Colonna, and Glenn~R. Easley, \emph{Composition
  operators on the {L}ipschitz space of a tree}, Mediterr. J. Math. \textbf{11}
  (2014), no.~1, 97--108. \MR{3160615}

\bibitem{AllenColonnaMartinesPons}
Robert~F. Allen, Flavia Colonna, Rub\'{e}n~A. Mart\'{\i}nez-Avenda\~{n}o, and
  Matthew~A. Pons, \emph{Hypercyclicity of composition operators on discrete
  weighted {B}anach spaces}, Mediterr. J. Math. \textbf{16} (2019), no.~5, Art.
  115, 20. \MR{3994861}

\bibitem{AllenCraig:2015}
Robert~F. Allen and Isaac~M. Craig, \emph{Multiplication operators on weighted
  {B}anach spaces of a tree}, Bull. Korean Math. Soc. \textbf{54} (2017),
  no.~3, 747--761. \MR{3659146}

\bibitem{AllenPons:2016}
Robert~F. Allen and Matthew~A. Pons, \emph{Composition operators on weighted
  {B}anach spaces of a tree}, Bull. Malays. Math. Sci. Soc. \textbf{41} (2018),
  no.~4, 1805--1818. \MR{3854493}

\bibitem{Banach:32}
Stefan Banach, \emph{Th\'{e}orie des op\'{e}rations lin\'{e}aires}, Chelsea
  Publishing Co., New York, 1955. \MR{0071726}

\bibitem{BonetLindstromWolf:2008}
Jos\'{e} Bonet, Mikael Lindstr\"{o}m, and Elke Wolf, \emph{Isometric weighted
  composition operators on weighted {B}anach spaces of type {$H^\infty$}},
  Proc. Amer. Math. Soc. \textbf{136} (2008), no.~12, 4267--4273. \MR{2431039}

\bibitem{Bourdon:2014}
Paul~S. Bourdon, \emph{Invertible weighted composition operators}, Proc. Amer.
  Math. Soc. \textbf{142} (2014), no.~1, 289--299. \MR{3119203}

\bibitem{CimaWogen:80}
Joseph~A. Cima and Warren~R. Wogen, \emph{On isometries of the {B}loch space},
  Illinois J. Math. \textbf{24} (1980), no.~2, 313--316. \MR{575069}

\bibitem{ColonnaEasley:10}
Flavia Colonna and Glenn~R. Easley, \emph{Multiplication operators on the
  {L}ipschitz space of a tree}, Integral Equations Operator Theory \textbf{68}
  (2010), no.~3, 391--411. \MR{2735443}

\bibitem{ContrerasHernandez-Diaz:2000}
Manuel~D. Contreras and Alfredo~G. Hern\'{a}ndez-D\'{i}az, \emph{Weighted
  composition operators in weighted {B}anach spaces of analytic functions}, J.
  Austral. Math. Soc. Ser. A \textbf{69} (2000), no.~1, 41--60. \MR{1767392}

\bibitem{Conway:85}
John~B. Conway, \emph{A course in functional analysis}, second ed., Graduate
  Texts in Mathematics, vol.~96, Springer-Verlag, New York, 1990. \MR{1070713}

\bibitem{ElGebeilyWolfe:85}
Mohamad El-Gebeily and John Wolfe, \emph{Isometries of the disc algebra}, Proc.
  Amer. Math. Soc. \textbf{93} (1985), no.~4, 697--702. \MR{776205}

\bibitem{Kolaski:82}
Clinton~J. Kolaski, \emph{Isometries of weighted {B}ergman spaces}, Canadian J.
  Math. \textbf{34} (1982), no.~4, 910--915. \MR{672684}

\bibitem{LoLoh:2019}
Ching-On Lo and Anthony Wai-Keung Loh, \emph{Fredholm weighted composition
  operators}, Oper. Matrices \textbf{13} (2019), no.~1, 169--186. \MR{3924378}

\bibitem{MacCluer:1997}
Barbara~D. MacCluer, \emph{Fredholm composition operators}, Proc. Amer. Math.
  Soc. \textbf{125} (1997), no.~1, 163--166. \MR{1371134}

\bibitem{MacCluer:09}
\bysame, \emph{Elementary functional analysis}, Graduate Texts in Mathematics,
  vol. 253, Springer, New York, 2009. \MR{2462971}

\bibitem{Montes:2000}
Alfonso Montes-Rodr\'{\i}guez, \emph{Weighted composition operators on weighted
  {B}anach spaces of analytic functions}, J. London Math. Soc. (2) \textbf{61}
  (2000), no.~3, 872--884. \MR{1766111}

\bibitem{MuthukumarPonnusamyII}
Perumal Muthukumar and Saminathan Ponnusamy, \emph{Composition operators on the
  discrete analogue of generalized {H}ardy space on homogenous trees}, Bull.
  Malays. Math. Sci. Soc. \textbf{40} (2017), no.~4, 1801--1815. \MR{3712587}

\bibitem{MuthukumarPonnusamyI}
\bysame, \emph{Discrete analogue of generalized {H}ardy spaces and
  multiplication operators on homogenous trees}, Anal. Math. Phys. \textbf{7}
  (2017), no.~3, 267--283. \MR{3683009}

\end{thebibliography}
\end{document}